\documentclass[11pt,reqno]{amsart}

\usepackage[margin=1.2in]{geometry}
\usepackage{}
\usepackage{amssymb}

\usepackage{bbm}
\usepackage{cases}
\usepackage{amsmath}
\usepackage{graphicx}
\usepackage{mathrsfs}
\usepackage{stmaryrd}
\usepackage{color}
\usepackage{soul}
\usepackage[dvipsnames]{xcolor}
\usepackage{amsfonts}
\usepackage{cite}
\usepackage{enumerate,amsmath,amssymb,amsthm}

\numberwithin{equation}{section}

\newcommand{\be}{\begin{eqnarray}}
\newcommand{\mE}{\end{eqnarray}}
\newcommand{\ce}{\begin{eqnarray*}}
\newcommand{\de}{\end{eqnarray*}}
\newtheorem{theorem}{Theorem}[section]
\newtheorem{lemma}[theorem]{Lemma}
\newtheorem{remark}[theorem]{Remark}
\newtheorem{definition}[theorem]{Definition}
\newtheorem{proposition}[theorem]{Proposition}
\newtheorem{example}[theorem]{Example}
\newtheorem{corollary}[theorem]{Corollary}

\def\e{{\mathrm{e}}}
\def\eps{\varepsilon}

\def\a{\alpha}

\def\p{\partial}

\def\<{{\langle}}
\def\>{{\rangle}}

\def\bx{{\mathbf{x}}}
\def\tr{{\rm tr}}

\def\dif{{\mathord{{\rm d}}}}

\def\no{\nonumber}
\def\={&\!\!=\!\!&}
\def\bt{\begin{theorem}}
\def\et{\end{theorem}}
\def\bl{\begin{lemma}}
\def\el{\end{lemma}}
\def\br{\begin{remark}}
\def\er{\end{remark}}

\def\bd{\begin{definition}}
\def\ed{\end{definition}}
\def\bp{\begin{proposition}}
\def\ep{\end{proposition}}
\def\bc{\begin{corollary}}
\def\ec{\end{corollary}}
\def\bx{\begin{example}}
\def\ex{\end{example}}

\def\cB{{\mathcal B}}
\def\cC{{\mathcal C}}

\def\cL{{\mathcal L}}

\def\mE{{\mathbb E}}

\def\mL{{\mathbb L}}

\def\mR{{\mathbb R}}

\def\mW{{\mathbb W}}

\def\sB{{\mathscr B}}

\def\sD{{\mathscr D}}

\def\sG{{\mathscr G}}

\def\sK{{\mathscr K}}
\def\sL{{\mathscr L}}
\def\sM{{\mathscr M}}

\def\sP{{\mathscr P}}

\def\sR{{\mathscr R}}

\def\sT{{\mathscr T}}

\def\geq{\geqslant}
\def\leq{\leqslant}

\allowdisplaybreaks

\begin{document}

\title{Long-time behavior of McKean--Vlasov stochastic systems with singular coefficients}

\date{}

\author{Shangyun Hua, Tao Wang and Longjie Xie}

\address{Shangyun Hua:
	School of Mathematics and Statistics, Jiangsu Normal University,
	Xuzhou, Jiangsu 221000, P.R. China\\
	Email: syhua@jsnu.edu.cn
}

	\address{Tao Wang:
		School of Mathematics and Statistics, Jiangsu Normal University,
		Xuzhou, Jiangsu 221000, P.R. China\\
		Email:  taowang@jsnu.edu.cn}

\address{Longjie Xie:
	School of Mathematics and Statistics, Jiangsu Normal University,
	Xuzhou, Jiangsu 221000, P.R. China\\
	Email: longjiexie@jsnu.edu.cn}

\begin{abstract}
We develop a quantitative  framework for the long-time behavior of McKean--Vlasov stochastic differential equations  with singular coefficients and possible phase transitions. The framework separates the existence of invariant measures from their uniqueness.
For existence, we introduce a generalized Lyapunov condition with a one-level trapping mechanism, which requires dissipativity  at  only one admissible moment level rather than global contraction. This permits distribution dependence with supercritical growth  and thus  is more compatible with phase-transition models. For uniqueness and convergence, we establish an anchored uniqueness and quantitative
ergodicity principle. An explicit \(L^1\)-smallness condition, expressed through a convolution kernel built from  derivative estimates of the anchored frozen semigroup,  yields uniqueness and transfers
exponential or polynomial mixing rates of the frozen dynamics to the nonlinear
McKean--Vlasov system.
Moreover,  the principle is  sensitive to the choice of topology: different distances  lead to different perturbation kernels and hence different stability thresholds, reflecting the effective structure of the law dependence of a given model. A local version gives local uniqueness and quantitative attraction near a prescribed equilibrium, which is useful when several invariant
measures coexist.
We apply the framework to two representative models. For non-symmetric granular media dynamics, we derive two explicit uniqueness thresholds that reveal the topology-sensitive nature
of the theory: the total variation criterion exploits noise-induced  regularization  and applies to rough $L^p$-interaction kernels, whereas the Wasserstein-1 criterion captures the exact
dissipativity and mean-field interaction balance and gives sharper thresholds. For the dynamical Curie--Weiss model,   the Wasserstein-1 criterion recovers the sharp bifurcation threshold up to the critical equality. We identify
all invariant measures, establish basin-dependent exponential convergence in the
 phase-transition regime, and show that the loss of  anchored smallness at criticality
leads to polynomial slowing down.

\bigskip

  \noindent {{\bf AMS 2020 Mathematics Subject Classification:} 60H10; 35Q84; 60J60}

  \noindent{{\bf Keywords:} McKean--Vlasov SDEs; singular coefficients; invariant measures; quantitative  ergodicity;
 phase transitions.}
\end{abstract}

\maketitle

\tableofcontents

\section{Introduction}

Consider the  McKean--Vlasov stochastic differential equation (SDE for short) on $\mR^d$ $(d\geq 1)$:
\begin{align}\label{sde0}
X_t = \xi+\int_0^tb(X_s,\cL_{X_s})\dif s+\int_0^t\sigma(X_s,\cL_{X_s})\dif W_s,\quad  t\geq 0,
\end{align}
where   $(W_t)_{t\geq 0}$  is a  standard Brownian motion, $\xi$ is an initial random variable,  and the coefficients
$$
b: \mR^d\times\sP(\mR^d)\to\mR^d,\quad \sigma: \mR^d\times\sP(\mR^d)\to\mR^d\otimes\mR^d
$$
depend on both the state  and the current law of the solution. Here and throughout, $\sP(\mR^d)$ denotes the space of all probability measures on $\mR^d$, and $\cL_{\xi}$ denotes the distribution of a random variable $\xi$.
Such equations
arise naturally as mean-field limits of
interacting particle systems (the so-called propagation of chaos, see e.g., \cite{Hao2024,GLM,JW,Sz91}) and provide probabilistic representations of nonlinear
Fokker--Planck equations. They play a central role in nonlinear
probability,  statistical
mechanics,  stochastic control and the analysis of collective phenomena, we refer the reader  to \cite{Barbu2020,CD18,Daw83,WangRen2025,RZ2021} and the references therein.

\smallskip
A central problem  in the theory of McKean--Vlasov SDEs is to
understand the long-time behavior of solutions. In particular, the existence,
uniqueness, stability, and basin structure of invariant   measures. These questions have been studied from a variety of perspectives,
see, among others, \cite{BRS19,Cormier2025,Eberle2019,Liu2021,Guillin2022, Hammersley2021,Huang2025,Huang2026,HuangWang2025, Ren2021,Liang2021,Gvalani2020,MR}.
They become substantially more difficult
when the coefficients are singular or
when the nonlinear system exhibits phase transitions, see \cite{C2020,Delgadino2021,Delgadino2023,T,Zhang2025}.
For  McKean--Vlasov SDEs with regular coefficients and
sufficient contractivity, the existence and uniqueness of invariant measures
can often be obtained simultaneously by a contraction argument for the frozen
invariant-measure map; see, e.g.,
\cite{W18,Wang2023a,Zhang2023,T,Zhang2025}. Such frameworks, however, are not
suitable in phase-transition regimes, where several invariant
measures may coexist.
Moreover, most existing results impose restrictive assumptions: the coefficients are typically required to be  locally Lipschitz or satisfy a strong dissipativity condition uniformly over all probability measures, and the distribution dependence is often limited to subcritical or critical growth. These assumptions exclude a wide range of physically and biologically relevant models with   singular, non-symmetric, or  supercritical interactions.

\smallskip
Even when existence is established, the uniqueness and quantitative convergence to equilibrium remain challenging. Standard approaches based on coupling--synchronous or asynchronous--often require strong regularity of the coefficients or special structure properties of the system (e.g., one-sided monotonicity  of the drift, or the diffusion coefficient independent of the distribution), and yield only partial information on convergence rates. Moreover, the choice of the metric, whether weighted total variation, Wasserstein, or a weaker moment-based distance, is typically dictated by   available estimates rather than by the effective structure of the
law dependence of the model, and the resulting uniqueness conditions are often implicit and far from sharp.

\smallskip
{\bf Main contributions.}
The purpose of this paper is to develop a   framework that addresses these issues within a common perturbative approach.  The resulting theory provides existence without global
contraction, topology-sensitive anchored uniqueness criteria, quantitative
exponential or polynomial ergodicity, and local attraction in regimes with
multiple equilibria. The principal results  are summarized below.

\smallskip
{\it (i) Existence through one-level Lyapunov trapping.}
In Theorem \ref{main1}, we establish the existence of invariant   measures under weak  regularity  assumptions and a generalized law-dependent Lyapunov-type condition. The key innovation is a one-level trapping mechanism: instead of requiring the frozen invariant-measure map to send every prescribed moment ball into itself, it suffices to construct one admissible trapping level
$M_0$. This allows us to treat distribution dependence of supercritical growth and is therefore  more
flexible than a global
large-moment domination condition. Moreover, the framework accommodates coefficients with only local integrability (for the drift), local ellipticity and local H\"older regularity (for the  diffusion), and allows the diffusion coefficient to depend on the distribution--a feature that is often excluded in singular coefficient settings. For models with additional singular drift, we incorporate a Zvonkin transform to extend the existence result to the
$L^p$-class of drifts, see Corollary \ref{sing}.  All existence results are obtained without requiring uniqueness, this is essential in phase-transition regimes where existence  remains valid although uniqueness is false.

\smallskip
{\it (ii) A topology-sensitive anchored uniqueness and quantitative ergodicity principle.}
We  develop a perturbative principle relative to a fixed invariant measure $\mu_\ast$ of the frozen equation. Rather than imposing or verifying a uniform contraction property for all frozen equations, we  measure  the strength of the distribution dependence of the coefficients only relative to this reference equilibrium.
Under an explicit anchored $L^1$-smallness condition,  we prove in Theorem \ref{main2} that $\mu_\ast$ is the unique invariant
measure  and attracts all admissible initial distributions.
Furthermore, we show that the convergence rate is inherited from the ergodic behavior of the frozen dynamics, covering both exponential and polynomial mixing.

\smallskip
The smallness condition is  expressed through a convolution kernel built from derivative estimates of the frozen semigroup. A distinctive feature of our approach is that the criterion   is sensitive to the distance used to measure perturbations. In the drift-only case, weighted total variation estimates are sufficient; when the diffusion coefficient depends on the distribution, a weaker weighted H\"older distance becomes natural because the perturbation analysis involves Hessian bounds of the frozen semigroup, see Corollary \ref{tu}. In models where the law dependence acts through low-order moments, the Wasserstein-1 formulation can give a  sharper condition, see Corollary \ref{cor:W1-unique}, as demonstrated in the granular media and Curie--Weiss examples below. Thus the choice of topology is part of the stability mechanism: our framework does not merely prove convergence in different distances, it identifies the topology in which the perturbative structure of the model is most faithfully represented.

\smallskip
We also establish in Theorem~\ref{local-anchored} a local version of the
anchored principle. If the coefficient perturbation estimates and  the anchored smallness
condition  hold only in
a neighborhood of an invariant measure,  the theorem yields local
uniqueness and quantitative local attraction.  This local result is designed for phase-transition
regimes in which several invariant measures coexist and
no global uniqueness criterion can hold. Combined with additional information on
the basin geometry, such as a closed order-parameter equation, the local
principle provides a general
route to quantitative basin-dependent  ergodicity.

\smallskip
{\it (iii) Explicit criteria for non-symmetric granular media dynamics.}
We apply the abstract framework to the non-symmetric granular media equation
\[
\dif X_t = \left[-\alpha X_t + \int_{\mathbb R^d}F(X_t,y)\,\cL_{X_t}(\dif y)\right]\dif t + \sigma\,\dif W_t,
\]
where no symmetry, convolution structure or gradient form
 is imposed on $F$.
Such equations  and related models have been extensively studied in the literature, see, e.g. \cite{BGG13,Carrillo2006,CGM08,W18}. In Theorem \ref{coe}, we provide existence criteria covering
$L^p$-interactions and polynomial-growth interactions. For uniqueness and quantitative convergence, we derive two explicit criteria in Theorem \ref{ou} that reveal the topology-sensitive nature of the theory. The first, based on the  total variation framework, applies to rough $L^p$-kernels and exploits the regularization induced by non-degenerate noise. Its threshold depends explicitly on the $L^p$-norm of $F$ and the noise intensity: stronger noise permits  larger interactions. The second, based on the Wasserstein-1 framework, requires Lipschitz or one-sided Lipschitz structure  but yields a sharper dissipative condition
which is independent of the additive noise and reflects the exact competition between confinement and mean-field interaction. This contrast highlights the role of the distance choice and illustrates a robustness--sharpness trade-off, see Remark \ref{23}.

\smallskip
{\it (iv) Phase-transition, critical slowing down, and basin-dependent
ergodicity in the dynamical Curie--Weiss model.}
To further demonstrate the power of our framework in the presence of  non-uniqueness, we analyze the dynamical Curie--Weiss model:
\[
\dif X_t = -(\gamma+c)X_t\dif t + \sqrt c\,\tanh(\sqrt c\,\mathbb E X_t)\dif t + \dif W_t.
\]
In Theorem \ref{tan1}, we identify all invariant  measures and
prove that the bifurcation occurs at $\gamma=0$: uniqueness holds for $\gamma\ge 0$, whereas for $-c<\gamma<0$ exactly three invariant measures coexist.
Importantly, the Wasserstein-1 criterion developed above recovers this bifurcation threshold  up to the critical equality. This demonstrates that the abstract smallness condition is not merely sufficient, but can be essentially sharp in certain problems.

\smallskip

Theorem~\ref{tan2} describes the complete convergence picture in the uniqueness
and critical regimes. For \(\gamma>0\), the nonlinear dynamics converges
exponentially to the unique equilibrium   with  rate  \(\gamma\)
for non-zero initial mean. At the critical value \(\gamma=0\), the global anchored gap
vanishes. Initial data with zero
mean remain on the centered Ornstein--Uhlenbeck dynamics and converge
exponentially, whereas non-zero initial means relax at the optimal
polynomial rate \(t^{-1/2}\).
Thus the loss of strict anchored gap
corresponds  to critical slowing down.

\smallskip
In the phase-transition regime, Theorem~\ref{tan3} identifies the basins of
attraction through the sign of the initial mean. Positive and negative initial means converge to the corresponding non-zero equilibria, whereas  zero mean remains on  the  centered branch. We establish exponential convergence to each stable equilibrium with an explicit rate given by the slower of the Ornstein--Uhlenbeck relaxation and the stability exponent of the nonlinear mean dynamics.
At a non-zero equilibrium, the local anchored gap coincides with the linear stability
exponent of the nonlinear mean equation. This provides a unified
interpretation of global exponential stability, critical polynomial
relaxation, and symmetry breaking.

\smallskip
{\it (v) A unified analytical backbone based on  global SDE perturbation estimates.}
All the above results--existence, uniqueness, and basin-dependent ergodicity--rest on the quantitative global stability estimates for weak solutions of SDEs under perturbations of drift and diffusion coefficients, developed in Section~3. The  estimates are formulated in an abstract test-function framework and then specialized to weighted total variation and Wasserstein-1 settings. The stability estimate provides the essential link between the frozen reference dynamics and the nonlinear McKean--Vlasov flow, and its flexibility across different topologies is precisely what enables the sharp phase-transition analysis.  This unified approach contrasts with previous techniques and we believe it offers a versatile toolbox for the long-time analysis of a broad class of singular mean-field systems, well beyond the two concrete examples studied here. Moreover, although local ellipticity is assumed for technical convenience, the method extends naturally to degenerate and kinetic models whenever suitable Kolmogorov regularity and generalized It\^o formulas are available; see Remark~\ref{deg}.

\smallskip
The remainder of the paper is organized as follows. Section~2 introduces the
assumptions and states the main results. Section~3 develops global stability
estimates for weak solutions of SDEs under drift and diffusion perturbations.
Section~4 proves the existence of invariant measures through the one-level
Lyapunov trapping mechanism. Section~5 establishes the global and local
anchored uniqueness and ergodicity principles. Sections~6 and 7 apply the
theory to non-symmetric granular media dynamics and the dynamical
Curie--Weiss model, respectively.

\medskip
\noindent{\bf Notations.}
We collect  the notations used throughout the paper. We write \(C\) for a positive constant whose value may change from line to
line. When its dependence is relevant, it is indicated by subscripts.

For \(x\in\mathbb R^d\) and \(R>0\), let
\[
B_R(x):=\{y\in\mathbb R^d:|y-x|<R\},
\qquad
B_R:=B_R(0).
\]
For $p\in[1,\infty]$,  $L^p(B_R)$ denotes the usual space consisting of all $p$-integrable functions over $B_R$. When $R=\infty$, we write $L^p=L^p(\mR^d)$.
Let $Q=I\times\mathcal{O}\subset\mathbb{R}\times\mathbb{R}^{d}$, where $I\subset\mathbb{R}$ is an open interval and $\mathcal{O}\subset\mathbb{R}^d$ is an open domain. For $q,p\in[1,\infty]$, we define the anisotropic Lebesgue norm
$$
\|f\|_{\mathbb{L}^q_p(Q)}:=\|f\|_{L^q(I;L^p(\mathcal{O}))}
:=\left(\int_I\left(\int_{\mathcal{O}} |f(t,x)|^p\,\dif x\right)^{q/p}\dif t\right)^{1/q},
$$
with the usual modifications when $q=\infty$ or $p=\infty$.
Let $\mW^{1,2}_{q,p}(Q)$ denote the anisotropic Sobolev space consisting of all measurable functions $f$ on $Q$ such that $\partial_t f$ and $\nabla_x^2 f$ exist in the weak sense and
$$
\|f\|_{\mW^{1,2}_{q,p}(Q)}:=\|f\|_{\mathbb{L}^q_p(Q)}+\|\partial_t f\|_{\mathbb{L}^q_p(Q)}+\|\nabla_x^2 f\|_{\mathbb{L}^q_p(Q)}<\infty.
$$
We also introduce the local Sobolev space
$$
\mW^{1,2}_{q,p;\mathrm{loc}}(Q):=\big\{u:\ u\phi\in \mW^{1,2}_{q,p}(Q)\ \text{for all }\phi\in \mathbf{C}^\infty_c(Q)\big\}.
$$

Given a non-negative measurable function $V:\mR^d\to[0,\infty)$,
define
$$
\cB_V(\mR^d)
:=
\left\{
f:\mR^d\to\mR:\
\|f\|_{\cB_V}:=\sup_{x\in\mR^d}\frac{|f(x)|}{V(x)}<\infty
\right\}.
$$
For $\alpha\in(0,1)$, define the weighted H\"older space
$$
{\bf C}^\alpha_V(\mR^d)
:=
\left\{
f\in {\bf C}^\alpha_{\rm loc}(\mR^d):\
\|f\|_{{\bf C}^\alpha_V}<\infty
\right\},
$$
where
$$
\|f\|_{{\bf C}^\alpha_V}
:=
\sup_{x\in\mR^d}\frac{\|f\|_{{\bf C}^\alpha(B_1(x))}}{V(x)}.
$$
Recall that $\sP(\mR^d)$ denotes the space of all probability measures on $\mR^d$. We write
\begin{equation*}
	\mu(V):=\int_{\mR^d}V(x)\mu(\dif x).
\end{equation*}
and define
\begin{equation*}
	\mathscr P_V(\mR^d)
	:=
	\left\{\mu\in\mathscr P(\mR^d):\mu(V)<\infty\right\}.
\end{equation*}
For $M>0$, we denote the bounded subsets of $
\sP_V(\mR^d)$   by
$$
\sP_V^M(\mR^d):=\big\{\mu\in\sP_V(\mR^d): {\mu(V)}\leq M\big\}.
$$
When $V(x)=1+|x|^r$ with \(r>0\), we write
\[
\mathscr P_r(\mathbb R^d)
:=\mathscr P_{1+|x|^r}(\mathbb R^d),\qquad
\|\mu\|_r
:=
\left(
\int_{\mathbb R^d}|x|^r\,\mu(\dif x)
\right)^{1/r}.
\]
Weak convergence of probability measures is
written as
\[
\mu_n\Rightarrow\mu,
\]
meaning that
\(
\mu_n(f)\rightarrow \mu(f)
\)
for every bounded continuous function \(f\in {\bf C}_b(\mR^d)\). For two probability measures $\mu_1,\mu_2$ on $\mR^d$,
 the weighted
total variation distance is defined by
\begin{align*}
{\bf d}_V(\mu_1,\mu_2)=\sup_{\|f\|_{\mathcal B_V}\leq1}
\left|
\int_{\mathbb R^d} f(x)(\mu_1-\mu_2)(\dif x)
\right|
\end{align*}
Equivalently, whenever the right-hand side is finite,
\begin{align*}
{\bf d}_V(\mu_1,\mu_2)
=
\int_{\mR^d}V(x)|\mu_1-\mu_2|(\dif x).
\end{align*}
In particular, when \(V\equiv1\), \({\bf d}_V\) reduces to the total
variation distance. For $\alpha\in(0,1)$, define also the weaker distance
\begin{align}\label{alphaV}
{\bf d}_{\alpha,V}(\mu_1,\mu_2)
:=
\sup_{\|f\|_{\mathbf C^\alpha_V}\leq1}
\left|
\int_{\mathbb R^d}f(x)(\mu_1-\mu_2)(\dif x)
\right|.
\end{align}
Note that if $V(x)\geq 1+|x|$, then
$$
W_1(\mu_1,\mu_2)\leq C_{\a,V}\,{\bf d}_{\alpha,V}(\mu_1,\mu_2)
\leq
C_{\a,V}\,{\bf d}_{V}(\mu_1,\mu_2).
$$

Throughout the paper, an admissible initial value means a
random variable \(\xi\) for which the corresponding solution of the
stochastic equation is well defined and all assumptions used in the stated
result, such as the required moment bounds or ergodic estimates, are finite.
Its law \(\mathcal L_\xi\) is called an admissible initial distribution. We shall write $X_t(\xi)$ to stress the dependence on the initial value of the solution, and when $\xi=x\in\mR^d$, we also write $X_t(x)$. If $X_t$ is a Markov process and $P_t$ is the associated semigroup, then for
a probability $\nu$ we use the standard notation
\begin{align}\label{up}
\nu P_t(f):=\int_{\mathbb R^d}P_t f(x)\,\nu(\dif x).
\end{align}
Equivalently, \(\nu P_t\) denotes the law at time \(t\) of the
process $X_t(\xi)$ with initial distribution $\cL_{\xi}=\nu$.

\section{Assumptions  and  main results}

\subsection{Existence of invariant measures}
A probability measure $\mu$ is called an {\it invariant  measure} of the
McKean--Vlasov SDE \eqref{sde0} if, whenever $\cL_{\xi}={\mu}$, the solution satisfies
$$
\cL_{X_t(\xi)}\equiv\mu,\quad\forall t\geq0.
$$
Our objective is to develop an existence theory under assumptions that are weak
enough to accommodate singular coefficients, law-dependent diffusions, and
the coexistence of multiple invariant measures.

\subsubsection*{Abstract one-level Lyapunov structure}
We first impose the following local regularity and ellipticity condition.

\begin{enumerate}[$(\mathbf{H_0})$]
\item  There exist two functions $0<\lambda(x,\mu)\leq\Lambda(x,\mu)<\infty$ such that
\begin{align*}
\lambda(x,\mu)|\xi|^{2} \leq |\sigma(x,\mu) \xi|^2 \leq \Lambda(x,\mu) |\xi|^{2},\quad \forall x,\xi\in \mathbb{R}^{d},\mu\in\sP(\mR^d).
\end{align*}
Moreover, one of the following alternatives holds:
\begin{enumerate}[(i)]
		\item There exists $\alpha\in(0,1)$ such that for every
$\mu\in\mathscr P(\mathbb R^d)$,
\[
\sigma(\cdot,\mu),\, b(\cdot,\mu)
\in
{\bf C}^\alpha_{\rm loc}(\mathbb R^d).
\]

\item The coefficient $\sigma(x,\mu)\equiv\sigma(x)$ is independent of $\mu$, and there exist $\alpha\in(0,1]$ and $p\in(d,\infty]$ such that for every $\mu\in\sP(\mR^d)$,
 $$
 \sigma(\cdot)\in \mathbf C^\alpha_{\rm loc}(\mathbb R^d),\quad
 b(\cdot,\mu)\in L^p_{\rm loc}(\mathbb R^d).
 $$
\end{enumerate}

\end{enumerate}
\vspace{2mm}

	For every fixed  $\mu\in\mathscr P(\mR^d)$, consider the frozen autonomous SDE
\begin{equation}\label{000}
\dif X_t^{\mu}
	=
	b(X_t^{\mu},\mu)\dif t
	+
	\sigma(X_t^{\mu},\mu)\dif W_t,
\end{equation}
and denote its generator by
\begin{equation*}\label{fg}
\mathscr L_\mu f(x)
:=
b(x,\mu)\cdot\nabla_x f(x)
+
\operatorname{tr}
\left(
a(x,\mu)\cdot\nabla^2_x f(x)
\right),
\end{equation*}
where	
$$
a(x,\mu):=\tfrac12\sigma(x,\mu)\sigma(x,\mu)^*.
$$
Under $(\mathbf{H_0})$ and for every $\mu\in\sP(\mR^d)$, the
autonomous SDE (\ref{000}) admits a unique weak solution up
to its explosion time by the standard local theory,  see e.g. \cite{SV,XXZ26}.
The next assumption is the key Lyapunov input for the existence theorem.

\vspace{1mm}
\begin{enumerate}[$(\mathbf{H_1})$]
\item There exist a function $U\in \mathbf C^2(\mathbb R^d;[0,\infty))$  with compact level sets, i.e.,
\[
\{x\in\mathbb R^d: U(x)\leq R\}
\quad\text{is compact for every }R>0,
\] and a
lower semicontinuous function
$
V:\mathbb R^d\rightarrow(0,\infty)
$
with compact level sets,
constants $\kappa_0>0$, $c_0\geq0$, $M_0>0$, and finitely many
constants
\[
c_i>0,\qquad \alpha_i\in[0,1),\qquad \beta_i\geq0,
\qquad 1\leq i\leq N,
\]
such that, for every $x\in\mathbb R^d$ and every
$\mu\in\mathscr P_V^{M_0}(\mathbb R^d)$,
\begin{align}\label{HL-general}
\mathscr L_\mu U(x)
\leq
-\kappa_0V(x)
+
\sum_{i=1}^N
c_iV(x)^{\alpha_i}\mu(V)^{\beta_i}
+c_0,
\end{align}
and
\begin{align}\label{trapping-general}
c_0+\sum_{i=1}^Nc_iM_0^{\alpha_i+\beta_i}
\leq \kappa_0M_0.
\end{align}
\end{enumerate}

\begin{remark}
The inequality \eqref{HL-general} should be viewed as  a generalized Lyapunov-type condition  with nonlinear feedback through the law.
The
key structural requirement is the scalar trapping condition
\eqref{trapping-general}.  It
says that the dissipative term $-\kappa_0V$ dominates the positive distribution
dependent contributions  at a single admissible moment level $M_0$.  This is
substantially weaker than requiring such domination uniformly over all moment balls, which is  often needed in
standard fixed-point approaches and naturally leads to subcritical or critical growth restrictions, see Remark \ref{sup} below for a more detailed explanation.
\end{remark}

The next assumption concerns the continuity of the coefficients with respect to the weak topology on the trapping moment ball, which is formulated
in a rather weak localized form.

\vspace{1mm}
\begin{enumerate}[$(\mathbf{H_2})$]
\item There exists   $p\in(d,\infty]$ such that for every $R>0$ and
$
\mu_n,\mu\in \mathscr P_V^{M_0}(\mathbb R^d)$ with
$
\mu_n\Rightarrow\mu$,
one has
\begin{align*}
\lim_{n\to\infty}\Big(\|b(\cdot,\mu_n)-b(\cdot,\mu)\|_{L^p(B_R)} +\|a(\cdot,\mu_n)-a(\cdot,\mu)\|_{L^\infty(B_R)}\Big)=0.
\end{align*}
\end{enumerate}

\begin{remark}
Let $V$ be as in $(\mathbf H_1)$.
If $\mu_n,\mu\in\sP_V^{M_0}$ and $\mu_n\Rightarrow\mu$, then
\[
\mu_n(f)\longrightarrow\mu(f)
\]
for every continuous function $f:\mathbb R^d\to\mathbb R$
such that $|f(x)|/V(x)\to0$ as $|x|\to\infty$.
Indeed,
\[
\sup_{\nu\in\sP_V^{M_0}}
\int_{|x|>R}|f(x)|\,\nu(\dif x)
\leq
M_0\sup_{|x|>R}\frac{|f(x)|}{V(x)}
\longrightarrow0,
\]
and the assertion follows by continuous truncation and weak convergence.

In particular, if $q\geq1$ and $|x|^q=o(V(x))$, then
\[
\mu_n,\mu\in\sP_V^{M_0},\qquad
\mu_n\Rightarrow\mu
\quad\Longrightarrow\quad
W_q(\mu_n,\mu)\longrightarrow0.
\]
Thus continuity of the coefficients with respect to $W_q$
in those local norms is sufficient for $(\mathbf H_2)$.
\end{remark}

We now state the first main result.

\bt\label{main1}
Assume $(\mathbf{H_0})$-$(\mathbf{H_2})$ hold. Then the McKean--Vlasov SDE (\ref{sde0}) admits at least one  invariant   measure \(\mu\in\mathscr P_V^{M_0}(\mathbb R^d)\).
\et

\br
The significance of Theorem \ref{main1} lies not only in the weak assumptions on the coefficients, but also in the fact that the nonlinear
system (\ref{sde0}) is allowed to have multiple invariant  measures.  The proof
does not rely on global contraction property of the frozen invariant-measure map,
thus no uniqueness of equilibria for the nonlinear McKean--Vlasov dynamics can be guaranteed.  This separation between existence and uniqueness makes
the result particularly suitable for models exhibiting phase transitions, where
non-uniqueness of invariant measures is a genuine feature of the dynamics.
\er

\subsubsection*{Explicit subcritical, critical, and supercritical regimes}
To make the one-level trapping mechanism in $(\mathbf{H_1})$ more transparent, we single out a two-term
   version.
This formulation already captures the essential balance between confinement and distribution-dependent growth, while making explicit the distinction between subcritical, critical, and supercritical regimes.

\vspace{1mm}

\begin{enumerate}[$(\mathbf{\hat H_1})$]
\item There exist
$
U\in\mathbf C^2(\mathbb R^d;[0,\infty))
$
and lower semicontinuous function
$
V:\mathbb R^d\rightarrow(0,\infty)
$
both with compact level sets, and constants
$
\vartheta_1\in[0,1),
\vartheta_2\geq0,
\kappa_1>0,
\kappa_2,\kappa_3,\kappa_4\geq0$ and $M_0>0,
$
such that for any $x\in\mR^d$ and $\mu\in \mathscr P_V^{M_0}(\mathbb R^d)$,
\begin{align}\label{HL-two-term}
\mathscr L_\mu U(x)
\leq
-\kappa_1V(x)
+
\kappa_2V(x)^{\vartheta_1}
\left(
\mu(V)^{\vartheta_2}+\kappa_3
\right)
+\kappa_4,
\end{align}
and
\begin{align}\label{two-term-exact-trapping}
\kappa_2M_0^{\vartheta_1+\vartheta_2}
+
\kappa_2\kappa_3M_0^{\vartheta_1}
+
\kappa_4
\leq
\kappa_1M_0.
\end{align}
\end{enumerate}

\vspace{2mm}
Obviously, \eqref{HL-two-term} is a particular case of \eqref{HL-general}, and condition \eqref{two-term-exact-trapping} is exactly
\eqref{trapping-general} in this case. Thus we have the following immediate reduction.

\begin{proposition}
Assume $(\mathbf{\hat H_1})$. Then $(\mathbf H_1)$ holds with
$N=2$.
\end{proposition}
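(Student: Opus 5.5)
The reduction is a direct identification of constants. Given $(\mathbf{\hat H_1})$, I keep the same $U$, $V$ and $M_0$, and set $\kappa_0:=\kappa_1$ and $c_0:=\kappa_4$. The middle term of \eqref{HL-two-term} splits into two summands:
\[
\kappa_2V(x)^{\vartheta_1}\mu(V)^{\vartheta_2}
\quad\text{and}\quad
\kappa_2\kappa_3V(x)^{\vartheta_1}\mu(V)^{0}.
\]
So I take $N=2$ with
\[
(c_1,\alpha_1,\beta_1):=(\kappa_2,\vartheta_1,\vartheta_2),
\qquad
(c_2,\alpha_2,\beta_2):=(\kappa_2\kappa_3,\vartheta_1,0).
\]
The range requirements of $(\mathbf H_1)$ hold: $\alpha_i=\vartheta_1\in[0,1)$, $\beta_1=\vartheta_2\ge 0$, $\beta_2=0$, and $\kappa_0=\kappa_1>0$, $c_0=\kappa_4\ge 0$. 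With these choices \eqref{HL-general} is literally \eqref{HL-two-term} for every $x$ and every $\mu\in\mathscr P_V^{M_0}(\mathbb R^d)$.

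For the trapping inequality, substituting gives $c_0+c_1M_0^{\alpha_1+\beta_1}+c_2M_0^{\alpha_2+\beta_2}=\kappa_4+\kappa_2M_0^{\vartheta_1+\vartheta_2}+\kappa_2\kappa_3M_0^{\vartheta_1}$. So \eqref{trapping-general} is exactly \eqref{two-term-exact-trapping}.

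The only point needing care is that $(\mathbf H_1)$ asks for $c_i>0$, while $\kappa_2,\kappa_3$ are only assumed nonnegative. I handle this by dropping any summand whose coefficient vanishes. If $\kappa_2=0$ both summands disappear, and if $\kappa_3=0$ only the second does. Formally, a vanishing term is the same as $c_i=0$ in \eqref{HL-general} and \eqref{trapping-general}, and both inequalities are unaffected. If one wants exactly $N=2$ with strictly positive constants, I instead replace a vanishing coefficient by a small $\eta>0$. Since $\kappa_1>0$, I first enlarge $\kappa_4$ slightly if needed so that \eqref{two-term-exact-trapping} is strict. A small $\eta$ then preserves it, and \eqref{HL-general} still holds because its right-hand side only increases. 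If \eqref{two-term-exact-trapping} is an equality, I simply accept $N<2$, i.e.\ zero coefficients, which the subsequent arguments clearly permit. This bookkeeping of degenerate coefficients is the only non-mechanical step; everything else is immediate.
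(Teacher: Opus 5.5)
Your identification $\kappa_0=\kappa_1$, $c_0=\kappa_4$, $(c_1,\alpha_1,\beta_1)=(\kappa_2,\vartheta_1,\vartheta_2)$, $(c_2,\alpha_2,\beta_2)=(\kappa_2\kappa_3,\vartheta_1,0)$ is exactly the paper's argument. The paper simply observes that \eqref{HL-two-term} is a special case of \eqref{HL-general} and that \eqref{two-term-exact-trapping} is then \eqref{trapping-general}. It does not mention that $(\mathbf H_1)$ requires $c_i>0$ while $\kappa_2,\kappa_3\geq0$, so you are right to raise that point.

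One sentence in your treatment of it is backwards, however. Enlarging $\kappa_4$ increases the left-hand side of \eqref{two-term-exact-trapping}, so it can never turn an equality into a strict inequality. Your fallback still covers the equality case, since zero coefficients are harmless in the subsequent arguments.

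If you want exactly $N=2$ with strictly positive $c_i$ in the equality case, use a different fix. There the left-hand side equals $\kappa_1M_0>0$, so some summand is positive. You can split that summand into pieces with the same exponents. For example, take $c_1=c_2=\kappa_2/2$ when $\kappa_3=0<\kappa_2$. Or, when $\kappa_2=0<\kappa_4$, use pieces of $\kappa_4$ with $\alpha_i=\beta_i=0$.
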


\begin{remark}\label{sup}
Define $\vartheta=\vartheta_1+\vartheta_2$, and
\[
\Phi(M)
:=
\kappa_1M
-\kappa_2M^{\vartheta}
-\kappa_2\kappa_3M^{\vartheta_1}
-\kappa_4.
\]
Then \eqref{two-term-exact-trapping} is equivalent to
\[
\Phi(M_0)\geq0
\]
for some admissible $M_0>0$. In particular:
\begin{enumerate}[(i)]
\item  If $\vartheta<1$ (subcritical regime), the distribution dependent part in (\ref{HL-two-term}) is
sublinear relative to the Lyapunov moment and is automatically dominated
by the dissipative term at large levels since $\Phi(M)\to+\infty$ as $M\to\infty$.

\item  If $\vartheta=1$ and $\kappa_2<\kappa_1$ (critical regime), domination still holds
since  again
      $\Phi(M)\to+\infty$ as $M\to\infty$.
\item If $\vartheta>1$ (supercritical regime), the positive term in (\ref{HL-two-term}) is supercritical and therefore
domination cannot hold for all large levels.  This is exactly where standard global fixed-point arguments break down. Nevertheless, our approach still applies whenever a trapping region  exists, i.e., whenever
      \[
      \sup_{M>0}\Phi(M)\geq0.
      \]
      For example, we can choose 	
      $$
      M_0
      =
      \left(\frac{\kappa_1}{\vartheta\kappa_2}\right)^{1/(\vartheta-1)},
      $$
      and assume that
      \begin{equation}\label{trapping-2}
       \kappa_4\leq(1-\vartheta^{-1})\kappa_1M_0-\kappa_2\kappa_3M_0^{\vartheta_1}.
      \end{equation}
   Then
      $\kappa_4\leq \kappa_1M_0-\kappa_2M_0^{\vartheta}-\kappa_2\kappa_3M_0^{\vartheta_1},$
      and $\Phi(M_0)\geq0$.
\end{enumerate}
Thus the existence theory developed here genuinely extends beyond the subcritical/critical threshold usually imposed in the literature.
\end{remark}

An explicit and sufficient coefficient-level criterion to ensure the assumption $(\mathbf{\hat H_1})$ is as follows.
	
	\begin{enumerate}[$(\mathbf{\tilde H_1})$]
		\item  There exist constants $r_3\geq r_1>0$,  $0\leq r_2<r_1$, $r_4\geq 0$, ${c}_1,{c}_2>0$, ${c}_3\geq 0$ and  $M_0>0$ such that for any $\mu\in\mathscr{P}_{1+|x|^{r_3}}^{M_0}(\mR^d)$,
		\begin{align*}
			2\<x,b(x,\mu)\>&+(1+r_3-r_1)\|\sigma(x,\mu)\|_{\mathrm{HS}}^2\\
			&\qquad\qquad\leq -{c}_1|x|^{r_1}+{c}_2|x|^{r_2}\cdot\|\mu\|_{r_3}^{r_4}+{c}_3.
		\end{align*}
		Moreover, one of the following alternatives holds.
		
		\begin{enumerate}[(i)]
			
			\item (subcritical case) $r_2+r_4<r_1$;
			
			\item (critical case) $r_2+r_4=r_1$ and ${c}_2<{c}_1$;
			
			\item (supercritical case) $r_2+r_4>r_1$,  and
			\begin{equation}\label{trapping-r}
				c_1+c_3M_0^{1+(r_2-r_1)/r_3}
				<
				\frac{\vartheta-1}{\vartheta}c_1M_0,
			\end{equation}		
 where
\[
\vartheta
:=
1+\frac{r_2+r_4-r_1}{r_3},
\qquad
M_0
:=
\left(
\frac{c_1}{\vartheta c_2}
\right)^{\frac{1}{\vartheta-1}}.
\]
		\end{enumerate}
	\end{enumerate}
	
\vspace{2mm}
We shall show  that under  $(\mathbf{\tilde H_1})$, the assumption $(\mathbf{\hat H_1})$ holds with
$$
U(x)=|x|^{2+r_3-r_1},\qquad V(x)=1+|x|^{r_3}.
 $$
 In this case, the continuity assumption $(\mathbf{H_2})$ reduces to the following explicit form.

\vspace{1mm}	
	\begin{enumerate}[$(\mathbf{\tilde H_2})$]
		\item For every $R>0$ and
$
\mu_n,\mu
\in
\sP^{M_0}_{1+|x|^{r_3}}(\mathbb{R}^d)
$
with
$
\mu_n\Rightarrow\mu,
$
one has
\begin{align*}
\lim_{n\to\infty}
\bigg[
&
\|b(\cdot,\mu_n)-b(\cdot,\mu)\|_{L^\infty(B_R)}
+
\|a(\cdot,\mu_n)-a(\cdot,\mu)\|_{L^\infty(B_R)}
\bigg]
=0.
\end{align*}
	\end{enumerate}
	
\vspace{2mm}
	We then obtain the following result.
	
	\begin{corollary}\label{cor1}
		Assume that $(\mathbf{H_0})$, $(\mathbf{\tilde H_1})$ and $(\mathbf{\tilde H_2})$ hold.
		Then the McKean--Vlasov SDE (\ref{sde0}) admits at least one invariant measure.
	\end{corollary}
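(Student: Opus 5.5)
The plan is to reduce Corollary \ref{cor1} to Theorem \ref{main1}. We check $(\mathbf{\hat H_1})$ with $U(x)=|x|^{2+r_3-r_1}$ and $V(x)=1+|x|^{r_3}$, which gives $(\mathbf H_1)$ by the Proposition above. We then note that $(\mathbf{\tilde H_2})$ implies $(\mathbf H_2)$, since $L^\infty(B_R)$ convergence dominates $L^p(B_R)$ convergence. Both $U$ and $V$ are continuous with compact level sets, and $U\in\mathbf C^2$ away from the origin. If $2+r_3-r_1<2$, one would instead use a smoothed version $U(x)=(1+|x|^2)^{(2+r_3-r_1)/2}$, which changes the estimates below only by bounded terms.

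\emph{Generator computation.} Write $q:=2+r_3-r_1\geq 2$. We have
\[
\nabla U=q|x|^{q-2}x,\qquad
\nabla^2U=q|x|^{q-2}I+q(q-2)|x|^{q-4}xx^*,
\]
so that
\[
\mathscr L_\mu U\leq \tfrac q2|x|^{q-2}\big(2\langle x,b\rangle+(1+(q-2))\|\sigma\|_{\rm HS}^2\big),
\]
where we used $\operatorname{tr}(a\,xx^*)\leq\frac12\|\sigma\|_{\rm HS}^2|x|^2$. Since $q-1=1+r_3-r_1$, assumption $(\mathbf{\tilde H_1})$ gives
\[
\mathscr L_\mu U\leq \tfrac q2\big(-c_1|x|^{r_3}+c_2|x|^{r_2+r_3-r_1}\|\mu\|_{r_3}^{r_4}+c_3|x|^{r_3-r_1}\big).
\]
With $V=1+|x|^{r_3}$ we have $|x|^{r_3}=V-1$, $|x|^{r_2+r_3-r_1}\leq V^{\vartheta_1}$ where $\vartheta_1:=(r_2+r_3-r_1)/r_3<1$, and $\|\mu\|_{r_3}^{r_4}\leq\mu(V)^{\vartheta_2}$ where $\vartheta_2:=r_4/r_3$. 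The term $c_3|x|^{r_3-r_1}$ is handled with the same exponent structure, either by bounding it by $c_3V^{\vartheta_1}$-type quantities or by absorbing it through Young's inequality into the dissipative term. This yields \eqref{HL-two-term} with $\kappa_1\approx\frac q2c_1$, $\kappa_2\approx\frac q2 c_2$, $\vartheta_1+\vartheta_2=\vartheta$, and $\kappa_3,\kappa_4$ determined by $c_1,c_3$.

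\emph{Trapping condition.} In the subcritical and critical cases we have $\vartheta<1$, or $\vartheta=1$ with $c_2<c_1$ and hence $\kappa_2<\kappa_1$. Remark \ref{sup}(i)--(ii) shows that $\Phi(M)\to\infty$, so some large $M_0$ satisfies \eqref{two-term-exact-trapping}. In this case the $M_0$ appearing in $(\mathbf{\tilde H_1})$ must be at least that large, which is harmless because the hypothesis is stated for such $M_0$. In the supercritical case we use the prescribed $M_0=(c_1/(\vartheta c_2))^{1/(\vartheta-1)}$. By Remark \ref{sup}(iii) it suffices to verify \eqref{trapping-2}, and \eqref{trapping-r} is designed to be exactly this after identifying $\kappa_4\sim c_1$ (from the $-c_1(V-1)$ shift) and $\kappa_2\kappa_3M_0^{\vartheta_1}\sim c_3M_0^{1+(r_2-r_1)/r_3}$. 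Theorem \ref{main1} then gives the invariant measure.

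The main obstacle is the bookkeeping of constants. The factor $q/2$ and the lower-order $c_3$ term must be placed into the exact form of \eqref{HL-two-term} so that \eqref{trapping-r} matches \eqref{trapping-2} without loss. The first approach to try is dividing through by $q/2$: this is allowed because $\Phi$ scales linearly in all the $\kappa$'s. After that, one checks that the ratio $\kappa_1/\kappa_2=c_1/c_2$ reproduces the stated $M_0$.
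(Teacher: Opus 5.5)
Your proposal is correct and follows essentially the same route as the paper. Both verify $(\mathbf{\hat H_1})$ with $U=|x|^{2+r_3-r_1}$ and $V=1+|x|^{r_3}$ via the same generator bound, take $\kappa_1=\kappa_4=\frac q2c_1$, $\kappa_2=\frac q2c_2$, $\kappa_2\kappa_3=\frac q2c_3$, and check that \eqref{trapping-r} is \eqref{trapping-2} divided by $q/2$.

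Two small points:
\begin{itemize}
\item The $c_3$-term should be bounded directly by $|x|^{r_3-r_1}\le V^{\vartheta_1}$, as the paper does (this uses $V\ge1$ and $r_3-r_1\le r_3\vartheta_1$). Absorbing it via Young's inequality instead would change the constants, and \eqref{trapping-r} would no longer match exactly.
\item The smoothed $U$ is never needed, since $r_3\ge r_1$ already forces $q\ge2$.
\end{itemize}
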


The following example illustrates a supercritical situation in
which large-moment domination fails, but a finite trapping level can still
be identified.
\begin{example}
	Consider the  McKean--Vlasov SDE
	\begin{equation}\label{SDE}
		\dif X_t=
		\left[
		-2X_t+
		\frac14
		\frac{X_t}{\sqrt{1+X_t^2}}
		\mE(X_t^2)
		\right]\dif t
		+\sqrt{2}\dif W_t .
	\end{equation}
Note that
	\begin{align*}
		&2\<x,b(x,\mu)\>
		+ \|\sigma(x,\mu)\|_{\mathrm{HS}}^2\leq
		-4|x|^2
		+
		\frac12 |x|\|\mu\|_2^2
		+2 .
	\end{align*}
Therefore, we may choose
$$r_1=2,\quad
		r_2=1,\quad
		r_3=2,\quad
		r_4=2,\quad \theta=\frac{3}{2},$$
	and
$$
		c_1=4,\qquad
		c_2=\frac12,\qquad
		c_3=2.$$
	Since $
	r_2+r_4 > r_1,$
this falls into the supercritical regime of $(\mathbf{\tilde H_1})$. Here
\[
M_0=\left(\frac{4}{(3/2)(1/2)}\right)^2,
\]
and
\[
4+2M_0^{1/2}
<
\frac13\,4M_0.
\]
Hence \eqref{trapping-r} holds, and \eqref{SDE} admits at least one invariant measure.
\end{example}

\subsubsection*{Singular  drifts extension}
We next establish the existence of invariant   measures for McKean--Vlasov systems with an additional singular drift  by combining the generalized Lyapunov condition with the Zvonkin transformation. Namely, consider the following McKean--Vlasov SDE:
\begin{equation}\label{sde02}
X_t = \xi+\int_0^t\Big[b_0(X_s,\cL_{X_s})+b_1(X_s,\cL_{X_s})\Big]\dif s+\int_0^t\sigma(X_s,\cL_{X_s})\dif W_s,
\end{equation}
where $b_0, b_1: \mR^d\times\sP(\mR^d)\to\mR^d$ are measurable. We make the following assumption.

\vspace{2mm}

	\begin{enumerate}[$(\mathbf{\tilde H}^{\rm sing}_1)$]
\item
Assume that \(\sigma\) is bounded, uniformly elliptic and for some \(\alpha\in(0,1)\),
\[
\sup_{\mu\in\mathscr P_{r_3}}
\|a(\cdot,\mu)\|_{\mathbf C_b^\alpha}<\infty.
\]
The pair $(\sigma,b_0)$ satisfies
\((\mathbf H_0)\), \((\mathbf{\widetilde H}_1)\) and
\((\mathbf{\widetilde H}_2)\) with
\(
r_1>1\)
in either the subcritical case
or the critical case.
Moreover,
for every \(M>0\), there exists \(C_M>0\) such that
\begin{align}\label{sing-b0-growth}
\sup_{\|\mu\|_{r_3}\leq M}|b_0(x,\mu)|
\leq C_M(1+|x|^{r_1-1}),
\qquad x\in\mathbb R^d.
\end{align}
\end{enumerate}

\begin{enumerate}[$(\mathbf{\tilde H}^{\rm sing}_2)$]
\item
There exists \(p>d\) such that
\begin{align}
\sup_{\mu\in\mathscr P_{r_3}(\mR^d)}
\|b_1(\cdot,\mu)\|_{L^p}
<\infty ,
\label{sing-H1-2}
\end{align}
and
  for every \(R>0\) and $\mu_n, \mu\in \sP_{r_3}(\mR^d)$ with $\mu_n\Rightarrow\mu$, we have
\begin{align}
\lim_{n\to\infty}\|b_1(\cdot,\mu_n)-b_1(\cdot,\mu)\|_{L^p(B_R)}
=0 .
\label{sing-H2}
\end{align}
\end{enumerate}

\begin{corollary}
\label{sing}
Assume that $(\mathbf{\tilde H_1^{\rm sing}})$ and
$(\mathbf{\tilde H_2^{\rm sing}})$ hold.
Then the McKean--Vlasov SDE \eqref{sde02} admits at least one invariant   measure.
\end{corollary}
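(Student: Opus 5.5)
The plan is to reduce Corollary \ref{sing} to Theorem \ref{main1}, using a Zvonkin-type transformation that absorbs the singular drift $b_1$. This is needed because Theorem \ref{main1} in alternative (ii) of $(\mathbf H_0)$ already allows $L^p_{\rm loc}$ drifts only when $\sigma$ is independent of $\mu$, and because the Lyapunov inequality of $(\mathbf{\tilde H}_1)$ is imposed only on $(\sigma,b_0)$. I will therefore not feed $b_0+b_1$ directly into $(\mathbf H_1)$. Instead I will modify the Lyapunov function.

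For each frozen $\mu\in\sP_{r_3}$ and a large $\lambda>0$, I solve the resolvent equation
\[
\lambda u_\mu-\mathscr L^{0}_\mu u_\mu - b_1(\cdot,\mu)\cdot\nabla u_\mu = b_1(\cdot,\mu),\qquad \mathscr L^0_\mu:=\operatorname{tr}(a(\cdot,\mu)\nabla^2)+b_0(\cdot,\mu)\cdot\nabla ,
\]
or, more conveniently, the version without $b_0$, treating $b_0$ through the growth bound \eqref{sing-b0-growth}. By the $\mathbf C^\alpha_b$ bound on $a$, uniform ellipticity and $p>d$, $L^p$-theory gives $\|u_\mu\|_\infty+\|\nabla u_\mu\|_\infty\le C\lambda^{-\delta}$ for some $\delta>0$, uniformly in $\mu$. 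Alternatively, and this is what I will actually use, I avoid changing variables in the SDE. I set
\[
\tilde U(x):=U(x)\bigl(1+\text{correction}\bigr)\quad\text{or}\quad \tilde U:=U-\phi_\mu ,
\]
where $\phi_\mu$ solves $\lambda\phi_\mu-\operatorname{tr}(a\nabla^2\phi_\mu)=b_1\cdot\nabla U$ locally. Since $\nabla U$ grows like $|x|^{1+r_3-r_1}$ and $b_1\in L^p$ is globally integrable, the correction is controlled by a weight.

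Concretely, the key estimate to establish is
\[
\mathscr L_\mu \tilde U\le -c_1'|x|^{r_3}+c_2'|x|^{r_3-r_1+r_2}\|\mu\|_{r_3}^{r_4}+c_3' ,
\]
with $c_1'$ close to $c_1$ and $c_2'$ close to $c_2$. This is exactly where the subcritical or critical restriction enters. An $O(\lambda^{-\delta})$ perturbation of the constants preserves $c_2'<c_1'$, but it would not preserve the finite-level supercritical inequality \eqref{trapping-r} without extra care. The requirement $r_1>1$ ensures that $\nabla U$ and $b_0$ grow slower than $U$, so that the lower-order terms from $b_0\cdot\nabla\phi$ are absorbed. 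With this estimate, $(\mathbf{\hat H_1})$ holds for every large $M_0$, by Remark \ref{sup}(i)--(ii).

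The remaining ingredients follow from earlier results. The local regularity in $(\mathbf H_0)$ for the combined drift is handled because the frozen SDE with $L^p$ drift and Hölder $a$ is well posed locally (Zvonkin). The continuity requirement $(\mathbf H_2)$ for $b_0+b_1$ in $L^p(B_R)$ follows from $(\mathbf{\tilde H}_2)$ and \eqref{sing-H2}, using the Remark after $(\mathbf H_2)$ to pass from weak convergence in $\sP^{M_0}_{V}$ to the needed convergence. The existence argument of Theorem \ref{main1} (tightness of frozen invariant measures in the trapping ball, plus a fixed point) then applies.

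The step I expect to be the main obstacle is the Lyapunov estimate for the singular part, namely controlling $b_1\cdot\nabla U$, which is only in $L^p$ times polynomial growth. I will first try the Zvonkin map $\Phi_\mu(x)=x+u_\mu(x)$. This map is a $C^1$-diffeomorphism close to the identity, uniformly in $\mu$, which makes $|x|$ and $|\Phi_\mu(x)|$ comparable. Applying It\^o's formula to $U(\Phi_\mu(X))$ then removes $b_1$ at the cost of bounded gradient perturbations of $b_0$ and $\sigma$.
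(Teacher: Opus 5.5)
The genuine gap is your last step, where you say the remaining ingredients ``follow from earlier results'' and that ``the existence argument of Theorem \ref{main1} then applies''. When $\sigma$ depends on $\mu$ and $b_1$ is only in $L^p$, neither alternative of $(\mathbf H_0)$ holds for $(\sigma,b_0+b_1)$. You note this yourself at the outset, but the transform you introduce only repairs the Lyapunov bound. The failure is not a formality. The weak continuity of $\mathcal T$ (Proposition \ref{weak-continuity-T}) is proved by comparing $P^{\mu_n}_t$ with $P^\mu_t$ through Corollary \ref{Lp}(ii). Its diffusion-perturbation term uses the Hessian bound \eqref{gr2}, which Lemma \ref{esu} supplies only for a H\"older reference drift. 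So checking $(\mathbf H_2)$ for $b_0+b_1$, together with well-posedness of each frozen equation, does not yield $\sup_{x\in B_L}|P^{\mu_n}_t\varphi(x)-P^\mu_t\varphi(x)|\to0$.

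In the paper the Zvonkin map does this second job as well. One applies the same $\Phi_\mu$ to both $X^{\mu_n}$ and $X^\mu$. Using the equation for $u_\mu$, $b_1$ then enters only through the errors $(I+\nabla u_\mu)\big(b_1(\cdot,\mu_n)-b_1(\cdot,\mu)\big)$ and $\operatorname{tr}\big((a(\cdot,\mu_n)-a(\cdot,\mu))\nabla^2u_\mu\big)$, which tend to zero in $L^p(B_R)$. These errors are controlled, up to the exit time from a large ball, by the Krylov estimate and the stochastic Gronwall inequality \cite[Lemma 2.11]{Zhang2023}. The fixed point is then taken directly for $\mathcal T$ on $\mathscr P^M_{r_3}$, not through Theorem \ref{main1}. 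Note also that $(\mathbf H_1)$ asks for a single $\mu$-independent $C^2$ function, whereas $U\circ\Phi_\mu$ depends on $\mu$ and is only $W^{2,p}_{\rm loc}$. Moment-ball invariance and compact containment still go through because $\sup_\mu\|u_\mu\|_\infty<\infty$, but they have to be redone.

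On the Lyapunov step itself, once you settle on $U\circ\Phi_\mu$ with the Zvonkin equation that omits $b_0$, you are doing the paper's computation. Indeed $\mathscr L_\mu(U\circ\Phi_\mu)=(\tilde{\mathscr L}_\mu U)\circ\Phi_\mu$, where $\tilde{\mathscr L}_\mu$ generates $\Phi_\mu(X^\mu)$ with drift $[(I+\nabla u_\mu)b_0+\lambda u_\mu]\circ\Phi_\mu^{-1}$, which contains no $b_1$. The role of $r_1>1$ is to absorb the term $\lambda u_\mu$, not terms from $b_0\cdot\nabla\phi$.

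Discard the variant $\tilde U=U-\phi_\mu$ with $\lambda\phi_\mu-\operatorname{tr}(a\nabla^2\phi_\mu)=b_1\cdot\nabla U$. Applying $\mathscr L_\mu$ to $\phi_\mu$ regenerates $b_1\cdot\nabla\phi_\mu$ with $\nabla\phi_\mu$ unbounded, so nothing singular is removed unless $b_1\cdot\nabla$ sits inside the resolvent operator, which is Zvonkin's equation again.

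Finally, ``$(\hat{\mathbf H}_1)$ holds for every large $M_0$'' needs an argument. Since $C_M$ in \eqref{sing-b0-growth} depends on $M$, the cross term $\langle y,(\nabla u_\mu)b_0\rangle$ is absorbed only if $\|\nabla u_\mu\|_\infty C_{M_0}$ is small. So $\lambda$ must be chosen after $M_0$, and the additive constant produced when $\lambda u_\mu$ is absorbed then depends on $M_0$. Remark \ref{sup}(i)--(ii) therefore does not apply verbatim. You must exhibit a trapping level with these level-dependent constants; this is automatic when $C_M$ is uniform, e.g.\ $b_0=-\alpha x$. The paper's sketch is no more detailed here: it simply asserts that such a level exists.
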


\begin{remark}
A central feature of Corollary \ref{sing} is that the Lyapunov
condition is imposed only on the pair $(\sigma, b_0)$, rather than on the full drift $b_0+b_1$. The singular term $b_1$ may not even be defined pointwise in a regular sense and may destroy the   Lyapunov assumption. It is handled additionally through the Zvonkin transform. This enlarges the range of admissible models and shows that the existence theory developed here is robust under singular lower-order perturbations.
\end{remark}

\subsection{Anchored uniqueness and quantitative ergodicity principle}
We now turn to uniqueness and convergence to equilibrium.
Let $\mu_\ast$ be an invariant  measure of the McKean--Vlasov SDE
\eqref{sde0}, whose existence follows from Theorem \ref{main1}. We fix this measure and use the frozen equation at
$\mu_\ast$ as a reference dynamics:
\begin{align}\label{ref-mustar}
	\dif \bar X_t
	=
	b(\bar X_t,\mu_\ast)\dif t
	+
	\sigma(\bar X_t,\mu_\ast)\dif W_t.
\end{align}
The corresponding Markov semigroup is denoted by
$$
P_t^\ast f(x):=\mathbb E f(\bar X_t(x)), \quad\forall t\geq 0.
$$
Our mechanism is to compare the original McKean--Vlasov dynamics (\ref{sde0}) with the anchored system (\ref{ref-mustar}).  The resulting anchored principle simultaneously controls uniqueness,
global or local attraction, and the transfer of convergence rates from the
reference dynamics (\ref{ref-mustar}) to the nonlinear McKean--Vlasov system (\ref{sde0}).

\subsubsection{Global anchored uniqueness and ergodicity}
We first formulate the global anchored principle. We shall show that, if the distributional dependence of the coefficients of the original
nonlinear equation is sufficiently small around $\mu_\ast$, then
$\mu_\ast$ is not only invariant but also unique and  attract all the  admissible initial distributions.
Here ``global'' means
that the coefficient perturbation estimates relative to \(\mu_\ast\) are
assumed for every admissible distribution, ``anchored" means that the
smallness is measured only relative to the reference equilibrium $\mu_\ast$, rather
than uniformly over all probability measures.

To formulate the result in a flexible and topology-sensitive way, let $\mathfrak F$ be a class of test functions equipped with a norm
$\|\cdot\|_{\mathfrak F}$, and define the associated dual distance on $\sP(\mR^d)$ by
\begin{align}\label{dF-new}
	{\bf d}_{\mathfrak F}(\mu,\nu)
	:=
	\sup_{\|f\|_{\mathfrak F}\leq 1}
	\left|
	\int_{\mathbb R^d} f(x)\,(\mu-\nu)(\dif x)
	\right|.
\end{align}
We always assume that ${\bf d}_{\mathfrak F}$ separates the probability measures in the admissible class, i.e., ${\bf d}_{\mathfrak F}(\mu,\nu)=0$ implies $\mu=\nu$. The role of \(\mathfrak F\) is twofold: it determines not only the topology in which convergence is proved,  but also the perturbation kernel required for the frozen semigroup and therefore the final smallness threshold. Typical examples to be discussed below are:
\begin{itemize}
	\item $\mathfrak F=\sB_V(\mR^d)$,  yielding the weighted total variation distance ${\bf d}_V(\mu,\nu)$. In particular, if $V$ is bounded, then ${\bf d}_{\mathfrak F}$ reduces to the total variation distance;
	\item  $\mathfrak F={\bf C}^\alpha_V(\mR^d)$ with $\alpha\in(0,1)$, yielding the weighted H\"older  dual distance ${\bf d}_{\alpha,V}(\mu,\nu)$ defined by (\ref{alphaV});
	\item $\mathfrak F={\text{Lip}}(\mR^d)$, yielding the Wasserstein
	distance $W_1$ through the Kantorovich--Rubinstein duality.
\end{itemize}

The anchored assumptions are divided into two parts.
The first concerns the ergodic and derivative estimates of the frozen reference semigroup, the second quantifies the size of the law-dependent perturbation relative to $\mu_\ast$. This  structure separates properties of the reference
semigroup from properties of the law dependence.

\begin{enumerate}[$({\mathbf U}^1_{\mathfrak F})$]
	\item
	\begin{enumerate}[(a)]
		\item The frozen semigroup $P_t^\ast$ is ergodic in the class
		$\mathfrak F$: there exists a decreasing function
		$\ell_\ast:\mR_+\to\mR_+$ with $\ell_\ast(t)\to0$ as $t\to\infty$, such that for every $f\in\mathfrak F$ and admissible initial distribution $\nu_0$,
		\begin{align}\label{F0}
			\left|
			\nu_0 P_t^\ast f -\mu_\ast(f)
			\right|
			\leq
			C_{\nu_0}\,\ell_\ast(t)\,\|f\|_{\mathfrak F},
			\qquad t\geq0,
		\end{align}
		where $\nu_0 P_t^\ast$ is defined by (\ref{up}), and $C_{\nu_0}$ is a constant depending on ${\nu_0}$.
		
		\item
		The following derivative estimates hold for
		$P_t^\ast$ in the class $\mathfrak F$: there exist locally bounded functions
		$
		\Theta_{1,\mathfrak F}^\ast,\Theta_{2,\mathfrak F}^\ast:\mathbb R^d\to[1,\infty)
		$
		and locally integrable kernels
		$
		\sK_{1,\mathfrak F}^\ast,\sK_{2,\mathfrak F}^\ast:\mathbb R_+\to\mathbb R_+
		$
		such that
		\begin{align}\label{F1}
			|\nabla_x P_t^\ast f(x)|
			\leq
			\Theta_{1,\mathfrak F}^\ast(x)
			\sK_{1,\mathfrak F}^\ast(t)
			\|f\|_{\mathfrak F}, \qquad t>0,\ x\in\mR^d,
		\end{align}
		and, when the diffusion coefficient $\sigma(x,\mu)$ depends on the distribution,
		\begin{align}\label{F2}
			|\nabla_x^2 P_t^\ast f(x)|
			\leq
			\Theta_{2,\mathfrak F}^\ast(x)
			\sK_{2,\mathfrak F}^\ast(t)
			\|f\|_{\mathfrak F}, \qquad t>0,\ x\in\mR^d.
		\end{align}
	\end{enumerate}
\end{enumerate}

\begin{remark}
	Since the reference equation (\ref{ref-mustar}) is a classical SDE, various criteria ensuring $({\mathbf U}^1_{\mathfrak F})$ are available in the literature. What matters for our purposes is that the precise form of the weights $\Theta_{1,\mathfrak F}^\ast(x)$, $\Theta_{2,\mathfrak F}^\ast(x)$ and the kernels  $\sK_{1,\mathfrak F}^\ast(t)$, $\sK_{2,\mathfrak F}^\ast(t)$ in (\ref{F1}) and (\ref{F2}) depend strongly on the chosen test-function class $\mathfrak F$. This dependence is  exactly what makes topology-dependent thresholds possible.
	\begin{enumerate}[(i)]
		\item In the weighted total variation/weighted H\"older-total variation framework, explicit formulas for $\Theta_{1,\mathfrak F}^\ast(x)$
		and $\Theta_{2,\mathfrak F}^\ast(x)$  are provided in \cite{XXZ26}, see also Lemma  \ref{esu} below. Meanwhile,  the
		kernels $\sK_{1,\mathfrak F}^\ast(t)$ and $\sK_{2,\mathfrak F}^\ast(t)$ arise from parabolic smoothing estimates of the
		frozen semigroup and typically display short-time singularities.
		In fact, one may take for $\mathfrak F=\sB_V(\mR^d)$,
		\begin{align}\label{k1}
			\sK_{1,\mathfrak F}^\ast(t)
			:=
			\mathbf 1_{(0,2]}(t)t^{-1/2}
			+
			\mathbf 1_{(2,\infty)}(t)\ell_\ast(t-1),
		\end{align}
		where $\ell_\ast$ is given exactly in (\ref{F0}), and when $\sigma(x,\mu)$ depends on the distribution,
		\begin{align}\label{k2}
			\sK_{2,\mathfrak F}^\ast(t)
			:=
			\mathbf 1_{(0,2]}(t)t^{-1+\alpha/2}
			+
			\mathbf 1_{(2,\infty)}(t)\ell_\ast(t-1),
		\end{align}
		with $\mathfrak F={\bf C}^\a_V(\mR^d)$.
		
		\item In a Lipschitz test-function class  or $W_1$ framework, the reference semigroup may instead be contractive. The resulting kernel can be purely exponential and free of
		short-time singularities. This difference is responsible for the sharper
		thresholds obtained in models with sufficient contractive structure, see Corollary \ref{cor:W1-unique} and Remark~\ref{rem:W1} below.
	\end{enumerate}
\end{remark}

The next assumption gives a concrete coefficient-level control of the
distributional perturbation relative to $\mu_\ast$.

\begin{enumerate}[$({\mathbf U}^2_{\mathfrak F})$]
	\item
	\begin{enumerate}[(a)]
		\item
		There exist constants $\kappa_b,\kappa_a\geq0$ and non-negative measurable
		functions $\Psi_b,\Psi_a$ such that, for every  $\mu\in\sP(\mR^d)$,
		\begin{align}\label{HF-b-new}
			|b(x,\mu)-b(x,\mu_\ast)|
			\leq
			\kappa_b \Psi_b(x)\,
			{\bf d}_{\mathfrak F}(\mu,\mu_\ast),
			\qquad x\in\mathbb R^d,
		\end{align}
		and
		\begin{align}\label{HF-a-new}
			|a(x,\mu)-a(x,\mu_\ast)|
			\leq
			\kappa_a \Psi_a(x)\,
			{\bf d}_{\mathfrak F}(\mu,\mu_\ast),
			\qquad x\in\mathbb R^d.
		\end{align}
		If $\sigma$ is independent of $\mu$, then we set $\kappa_a=0$ and the condition
		\eqref{HF-a-new} is omitted.
		
		\item There exist constants
		$\cC_{b,\mathfrak F},\cC_{a,\mathfrak F}<\infty$ such that for every admissible initial random variable \(\xi\), the solution
\(X_t=X_t(\xi)\) of \eqref{sde0} satisfies
		\begin{align}\label{HF-weight-b}
			\mathbb E\left[
			\Theta_{1,\mathfrak F}^\ast(X_t)\Psi_b(X_t)
			\right]
			\leq \cC_{b,\mathfrak F},
			\qquad t\geq0,
		\end{align}
		and when $\kappa_a>0$,
		\begin{align}\label{HF-weight-a}
			\mathbb E\left[
			\Theta_{2,\mathfrak F}^\ast(X_t)\Psi_a(X_t)
			\right]
			\leq \cC_{a,\mathfrak F},
			\qquad t\geq0.
		\end{align}
	\end{enumerate}
\end{enumerate}

\begin{remark}
	Assumption $({\mathbf U}_{\mathfrak F}^2)$-$(b)$ says that the spatial growth in the perturbation needs to be compatible with the
	derivative weights of the frozen semigroup.
	Thus the perturbation size is not only determined by the Lipschitz constants
	$\kappa_b,\kappa_a$, but also by the compatibility between the spatial growth
	of the coefficient perturbations and the weights appearing in the gradient and
	Hessian estimates.
	
	In particular, if the coefficients are global Lipschitz under ${\bf d}_{\mathfrak F}$, then one may take $\Psi_b=\Psi_a\equiv1$, and if the derivative weights   are uniformly bounded in the chosen class $\mathfrak F$,
	then the condition  $({\mathbf U}_{\mathfrak F}^2)$-$(b)$ follows immediately. The weighted
	formulation is more flexible: it permits unbounded spatial dependence
	provided that the corresponding weighted moments remain uniformly
	controlled.
\end{remark}

Define the abstract anchored convolution kernel
\begin{align}\label{Gamma-F}
	\Gamma_{\mathfrak F}(t)
	:=
	\kappa_b\cC_{b,\mathfrak F}\sK_{1,\mathfrak F}^\ast(t)
	+
	\kappa_a\cC_{a,\mathfrak F}\sK_{2,\mathfrak F}^\ast(t),
	\qquad t>0.
\end{align}
If $\sigma(x,\mu)\equiv\sigma(x)$ is independent of $\mu$, then
$\kappa_a=0$ and only the first term remains.
The quantity $\Gamma_{\mathfrak F}$ is the effective perturbation profile of the nonlinear equation around the reference equilibrium $\mu_\ast$.
Its $L^1$-size measures the total accumulated nonlinear feedback and  turns out to be the decisive threshold for uniqueness and convergence.

\begin{theorem}\label{main2}
	Assume that
	$({\mathbf U}^1_{\mathfrak F})$ and $({\mathbf U}^2_{\mathfrak F})$ hold.  If
	\begin{align}\label{small-F-main}
		\Lambda_{\mathfrak F}
		:=
		\int_0^\infty
		\Gamma_{\mathfrak F}(s)\dif s
		<1,
	\end{align}
	then $\mu_\ast$ is the unique invariant  measure for the McKean--Vlasov SDE (\ref{sde0}).  Moreover, for every
	admissible initial value $\xi$ with distribution ${\nu_0}$, we have
	\begin{align}\label{abstract-convergence}
		{\bf d}_{\mathfrak F}\big(\cL_{X_t},\mu_\ast\big)\to0,
		\qquad t\to\infty.
	\end{align}
	Meanwhile, the following quantitative estimates hold.
	
	\begin{enumerate}[(i)]
		\item
		If
		$
		\ell_\ast(t)\leq \e^{-\lambda t}
		$
		for some $\lambda>0$, and if there exists $\theta\in(0,\lambda]$ such that
		\begin{equation*}\label{exp0}
			\Lambda_{\mathfrak F,\theta}
			:=
			\int_0^\infty \e^{\theta s}\Gamma_{\mathfrak F}(s)\dif s<1,
		\end{equation*}
		then
		\begin{align}\label{exp1}
			{\bf d}_{\mathfrak F}\big(\cL_{X_t},\mu_\ast\big)
			\leq
			\frac{C_{\nu_0}}{1-\Lambda_{\mathfrak F,\theta}}
			\e^{-\theta t},
			\qquad t\geq0,
		\end{align}
		where $C_{\nu_0}$ is the constant in (\ref{F0}).
		
		\item
		If
		$
		\ell_\ast(t)\leq (1+t)^{-\gamma}
		$
		for some   $\gamma>0$, and if
		\begin{equation*}\label{pol0}
			\Lambda_{\mathfrak F,\gamma}
			:=
			\sup_{t\geq0}
			\int_0^t
			\left(\frac{1+t}{1+s}\right)^\gamma
			\Gamma_{\mathfrak F}(t-s)\dif s
			<1,
		\end{equation*}
		then
		\begin{align}\label{pol1}
			{\bf d}_{\mathfrak F}\big(\cL_{X_t},\mu_\ast\big)
			\leq
			\frac{C_{\nu_0}}{1-\Lambda_{\mathfrak F,\gamma}}
			(1+t)^{-\gamma},
			\qquad t\geq0.
		\end{align}
		Alternatively, the same polynomial rate holds if
		\[
		\int_0^\infty
		(1+s)^\gamma\Gamma_{\mathfrak F}(s)\dif s
		<\infty.
		\]
		More precisely,
		\[
		{\bf d}_{\mathfrak F}
		\big( \mathcal L_{X_t},\mu_\ast\big)
		\leq
		C_{\nu_0}
		\left(
		1+
		\int_0^\infty
		(1+s)^\gamma \sR_{\mathfrak F}(s)\dif s
		\right)
		(1+t)^{-\gamma},
		\]
		where
		\[
		\sR_{\mathfrak F}
		:=
		\sum_{n=1}^\infty
		\Gamma_{\mathfrak F}^{\ast n}\quad\text{and}\quad\int_0^\infty
		(1+s)^\gamma \sR_{\mathfrak F}(s)\dif s<\infty.
		\]
	\end{enumerate}
\end{theorem}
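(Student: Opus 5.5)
The approach is a Duhamel (variation-of-constants) comparison between the nonlinear flow and the anchored frozen semigroup $P_t^\ast$, which turns the distance to $\mu_\ast$ into a Volterra inequality with kernel $\Gamma_{\mathfrak F}$.

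Fix an admissible $\xi$ with law $\nu_0$, and write $\mu_t:=\cL_{X_t}$. Once $(\mu_t)$ is frozen, $X_t$ solves a time-inhomogeneous linear SDE with coefficients $b(\cdot,\mu_t)$ and $\sigma(\cdot,\mu_t)$.

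\emph{Step 1: Duhamel identity.} For $f\in\mathfrak F$ and fixed $t>0$, set $u(s,x):=P^\ast_{t-s}f(x)$, which solves $\partial_s u+\mathscr L_{\mu_\ast}u=0$ on $[0,t)$. Apply the (generalized) It\^o formula to $u(s,X_s)$ on $[0,t]$ and take expectations. This gives
\begin{align*}
\mu_t(f)-\nu_0P_t^\ast f=\int_0^t\mathbb E\Big[(b(X_s,\mu_s)-b(X_s,\mu_\ast))\cdot\nabla P^\ast_{t-s}f(X_s)+\tr\big((a(X_s,\mu_s)-a(X_s,\mu_\ast))\nabla^2P^\ast_{t-s}f(X_s)\big)\Big]\dif s .
\end{align*}
Justifying this identity is the main technical obstacle. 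The coefficients are only locally $L^p$ or H\"older, so $u$ is merely in $\mW^{1,2}_{q,p;\mathrm{loc}}$, the kernels $\sK^\ast_{i,\mathfrak F}$ blow up as $s\uparrow t$, and $f$ may be unbounded. I would handle it as follows:
\begin{itemize}
\item localize with stopping times $\tau_R$ (exit times of $B_R$) and apply Krylov's estimate, which gives the It\^o--Krylov formula for $\mW^{1,2}$ functions, as developed in Section~3;
\item integrate only up to $t-\eps$;
\item let $R\to\infty$ and then $\eps\to0$, using the weighted moment bounds \eqref{HF-weight-b}--\eqref{HF-weight-a} for domination and the local integrability of $\sK^\ast_i$;
\item for general $f\in\mathfrak F$, approximate by bounded smooth functions.
\end{itemize}
The global perturbation estimates of Section~3 are designed for exactly this, and I would invoke them.

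\emph{Step 2: Volterra inequality.} Use $({\mathbf U}^2_{\mathfrak F})$(a), the derivative bounds \eqref{F1}--\eqref{F2}, and \eqref{HF-weight-b}--\eqref{HF-weight-a}. Then take the supremum over $\|f\|_{\mathfrak F}\le1$ and use \eqref{F0}. With $\phi(t):={\bf d}_{\mathfrak F}(\mu_t,\mu_\ast)$ this yields
\[
\phi(t)\le C_{\nu_0}\ell_\ast(t)+\int_0^t\Gamma_{\mathfrak F}(t-s)\phi(s)\dif s .
\]
One needs $\phi$ locally bounded, which follows from the same moment bounds. Iterating gives $\phi\le C_{\nu_0}\big(\ell_\ast+\sR_{\mathfrak F}\ast\ell_\ast\big)$ with $\sR_{\mathfrak F}=\sum_{n\ge1}\Gamma^{\ast n}_{\mathfrak F}$, and $\|\sR_{\mathfrak F}\|_{L^1}\le\Lambda_{\mathfrak F}/(1-\Lambda_{\mathfrak F})$ by \eqref{small-F-main}.

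\emph{Step 3: convergence and uniqueness.} Since $\ell_\ast$ is decreasing with $\ell_\ast\to0$ and $\sR_{\mathfrak F}\in L^1$, dominated convergence shows $\sR_{\mathfrak F}\ast\ell_\ast(t)\to0$. This gives \eqref{abstract-convergence}. For uniqueness, let $\nu$ be another admissible invariant measure and start from $\nu$. Then $\mu_t\equiv\nu$, so $\phi$ is constant, and $\phi\to0$ forces ${\bf d}_{\mathfrak F}(\nu,\mu_\ast)=0$. The separation property then gives $\nu=\mu_\ast$.

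\emph{Step 4: rates.} For (i), multiply the inequality by $\e^{\theta t}$ and set $\psi(t):=\sup_{s\le t}\e^{\theta s}\phi(s)$. Since $\e^{\theta t}\ell_\ast(t)\le1$ and $\e^{\theta t}\Gamma(t-s)\phi(s)=\e^{\theta(t-s)}\Gamma(t-s)\,\e^{\theta s}\phi(s)$, we get $\psi(t)\le C_{\nu_0}+\Lambda_{\mathfrak F,\theta}\psi(t)$. As $\psi$ is finite, this gives \eqref{exp1}. For (ii), run the same argument with the weight $(1+t)^\gamma$: here $(1+t)^\gamma\Gamma(t-s)\phi(s)=\big(\tfrac{1+t}{1+s}\big)^\gamma\Gamma(t-s)\,(1+s)^\gamma\phi(s)$, which gives \eqref{pol1}. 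For the alternative statement, use $\phi\le C_{\nu_0}(\ell_\ast+\sR\ast\ell_\ast)$ together with $(1+t)^\gamma\le (1+t-s)^\gamma(1+s)^\gamma$. This yields $(1+t)^\gamma\,\sR\ast\ell_\ast(t)\le\int_0^\infty(1+s)^\gamma\sR_{\mathfrak F}(s)\dif s$.
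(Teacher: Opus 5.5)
Your Steps 1--4 follow the paper's proof closely. The paper splits $\cL_{X_t}(f)-\mu_\ast(f)$ into the frozen ergodic term and the Duhamel term, and bounds the latter by Theorem~\ref{thm2-abstract}; that theorem is your It\^o/Kolmogorov identity with equal initial laws. It then obtains the same Volterra inequality and applies Lemma~\ref{gron}, whose proof is your resolvent iteration together with your weighted-supremum arguments. Your uniqueness argument, which applies $\phi\to0$ to the stationary flow started from $\nu$, is an equivalent variant of the paper's direct one.

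There is one genuine omission, in the resolvent alternative of (ii). Write $M_\gamma:=\int_0^\infty(1+s)^\gamma\Gamma_{\mathfrak F}(s)\dif s$.
\begin{itemize}
\item \emph{What is missing.} The statement asserts that $M_\gamma<\infty$, together with $\Lambda_{\mathfrak F}<1$, implies $\int_0^\infty(1+s)^\gamma\sR_{\mathfrak F}(s)\dif s<\infty$. Your last step only shows $(1+t)^\gamma\,(\sR_{\mathfrak F}\ast\ell_\ast)(t)\le\int_0^\infty(1+s)^\gamma\sR_{\mathfrak F}(s)\dif s$. That bound is vacuous if the right-hand side is infinite, so the ``same polynomial rate'' is not yet proved.
\item \emph{Why the obvious route fails.} Submultiplicativity, $(1+s_1+\cdots+s_n)^\gamma\le\prod_i(1+s_i)^\gamma$, only gives $\int_0^\infty(1+s)^\gamma\Gamma_{\mathfrak F}^{\ast n}(s)\dif s\le M_\gamma^n$. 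Summing this would require $M_\gamma<1$, which is a different and stronger criterion.
\item \emph{The fix.} Put the weight on a single factor of each $n$-fold convolution, using $(1+s_1+\cdots+s_n)^\gamma\le n^{(\gamma-1)_+}\sum_{i=1}^n(1+s_i)^\gamma$ (subadditivity for $\gamma<1$, Jensen for $\gamma\ge1$). This gives $\int_0^\infty(1+s)^\gamma\Gamma_{\mathfrak F}^{\ast n}(s)\dif s\le n^{\gamma\vee1}M_\gamma\Lambda_{\mathfrak F}^{\,n-1}$. The right-hand side is summable in $n$ precisely because $\Lambda_{\mathfrak F}<1$; this is \eqref{gR} in the paper.
\end{itemize}

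A minor point: local boundedness of $\phi$ does not follow from \eqref{HF-weight-b}--\eqref{HF-weight-a}. Those bounds control $\Theta^\ast\Psi$, not the weight defining $\mathfrak F$; for example, for $\mathfrak F=\cB_{1+V}$ you need $\mathbb E V(X_t)$. Local boundedness is also needed for your supremum arguments in Step 4, since you use that $\psi$ is finite. The paper treats it as part of admissibility of $\xi$ (compare \eqref{a2}), and you should do the same.
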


\begin{remark}
	Several important remarks are in order.
	
	\smallskip
	(i) First, we point out that condition \eqref{small-F-main} is an anchored gap assumption at $\mu_\ast$, which measures
	the strength of the distribution dependence only relative to the reference equilibrium $\mu_\ast$. It does not require a contraction estimate between
	all pairs of probability measures \((\mu,\nu)\), nor does it require uniform mixing estimates for the full family of frozen equations.
	This is  the principal structural
	difference from standard global contraction arguments.
	
	\smallskip
	(ii) Second, the theorem is  quantitative. Once the ergodic profile $\ell_\ast$ and the perturbation kernel $\Gamma_{\mathfrak F}$ are identified, the convergence rate of the nonlinear system follows explicitly. In this sense, the result shows that the long-time behavior of the nonlinear McKean--Vlasov equation is inherited from the frozen dynamics up to a computable perturbative correction.
	
	\smallskip
	(iii) Third, the criterion is topology-sensitive. The framework does not impose a canonical distance a priori; rather, it depends on how well
	the chosen test-function class captures the effective direction of the
	distributional dependence: the better this adaptation is or the more closely the chosen test-function class is adapted to the actual
	direction of the distributional dependence, the closer the resulting smallness
	condition is to the true phase-transition threshold. Hence the distance is part of the stability mechanism. In favorable models, this  recovers the sharp phase-transition scale, as will be seen in the Curie--Weiss example.
\end{remark}

\subsubsection*{ Two topology-adapted realizations.}
Below, we formulate two realizations of the above abstract framework: the weighted total variation and H\"older-dual version and the $W_1$ version.  In the dynamical Curie--Weiss model, we shall show that the $W_1$
criterion recovers the exact phase-transition threshold, up to the critical equality, revealing the sharpness of the condition \eqref{small-F-main}.

\subsubsection*{(i) Weighted total variation and H\"older-dual realization.}

We first record the weighted total variation/weighted H\"older-dual realization of    Theorem \ref{main2}.
This version is robust under weak regularity assumptions and
is particularly suited to singular coefficients.
It is also the appropriate framework when the distributional  dependence is performed in duality with  locally bounded or locally H\"older observables, rather than in a Wasserstein framework.

In this case, $({\mathbf U}^1_{\mathfrak F})$-(a) becomes the following assumption:

\vspace{1mm}
\begin{enumerate}[$({\mathbf U}^1_{V})$]
	\item Suppose that the frozen equation \eqref{ref-mustar}
	satisfies
	\begin{equation*}\label{tf}
		|P_t^\ast f(x)-\mu_\ast(f)|
		\leq
		\ell_\ast(t)(1+V(x))\|f\|_{\mathcal B_{1+V}},
		\qquad t\geq0.
	\end{equation*}
\end{enumerate}

\vspace{2mm}
By Lemma \ref{esu} below, there exist locally bounded functions $\Theta_1^\ast(x)$ and $\Theta_{2,\a}^\ast(x)$ such that
\begin{equation*}\label{tg}
	|\nabla_x P_t^\ast f(x)|
	\leq
	\Theta_1^\ast(x)\sK_1^\ast(t)
	\|f\|_{\mathcal B_{1+V}},
\end{equation*}
and, when the diffusion depends on the distribution,
\begin{equation*}\label{th}
	|\nabla_x^2 P_t^\ast f(x)|
	\leq
	\Theta_{2,\alpha}^\ast(x)\sK_{2,\alpha}^\ast(t)
	\|f\|_{\mathbf C^\alpha_{1+V}},
\end{equation*}
where $\a\in(0,1)$, and the kernels { $\sK_1^\ast(t):=\sK_{1,\mathfrak F}^\ast(t), \sK_{2,\alpha}^\ast(t):=\sK_{2,\mathfrak F}^\ast(t)$} are given explicitly by (\ref{k1}) and (\ref{k2}).
The perturbation assumption $({\mathbf U}^2_{\mathfrak F})$ becomes:

\vspace{1mm}
\begin{enumerate}[$({\mathbf U}^2_{V})$]
	\item
	\begin{enumerate}[(a)]
		\item
		There exist constants $\kappa_b,\kappa_a\geq0$ and weights $\Psi_b,\Psi_a\geq 0$ such that
		\begin{equation*}\label{tb}
			|b(x,\mu)-b(x,\mu_\ast)|
			\leq
			\kappa_b \Psi_b(x)\mathfrak D(\mu,\mu_\ast),
		\end{equation*}
		and
		\begin{equation*}\label{ta}
			|a(x,\mu)-a(x,\mu_\ast)|
			\leq
			\kappa_a \Psi_a(x)\mathfrak D(\mu,\mu_\ast),
		\end{equation*}
		where
		$$
		\mathfrak D={\bf d}_{1+V}
		$$
		if $\sigma(x,\mu)\equiv\sigma(x)$ is independent of $\mu$, and
		$$
		\mathfrak D={\bf d}_{\alpha,1+V}
		$$
		when the diffusion depends on the distribution.

		\item There exist constants
		$\cC_b, \cC_a<\infty$ such that for every $t\geq0$,
		\begin{equation*}\label{tc1}
			\mathbb E\big[
			\Theta_1^\ast(X_t)\Psi_b(X_t)
			\big]
			\leq \cC_b<\infty,
		\end{equation*}
		and, when $\kappa_a>0$,
		\begin{equation*}\label{tc2}
			\mathbb E\big[
			\Theta_{2,\alpha}^\ast(X_t)\Psi_a(X_t)
			\big]
			\leq \cC_a<\infty.
		\end{equation*}
	\end{enumerate}
\end{enumerate}

\vspace{2mm}
Set
\begin{align}\label{tg}
	\Gamma(t)
	:=
	\kappa_b \cC_b\sK_1^\ast(t)
	+
	\kappa_a\cC_a\sK_{2,\alpha}^\ast(t).
\end{align}
If
$\sigma$ is independent of $\mu$, then  $\kappa_a=0$. We then obtain the following direct corollary.

\begin{corollary}[Weighted total variation/H\"older-dual version]\label{tu}
	Assume that $({\mathbf U}^1_{V})$ and $({\mathbf U}^2_{V})$ hold.
	If
	\begin{equation*}
		\int_0^\infty\Gamma(s)\dif s<1,
	\end{equation*}
	then $\mu_\ast$ is the unique invariant  measure for the McKean--Vlasov SDE (\ref{sde0}) in
	$\mathscr P_V(\mR^d)$, and
	\begin{equation*}\label{t}
		\mathfrak D(\cL_{X_t},\mu_\ast)\to0,
		\qquad t\to\infty.
	\end{equation*}
	Moreover, the exponential and polynomial rates follow from Theorem
	\ref{main2} with $\Gamma_{\mathfrak F}=\Gamma$.
\end{corollary}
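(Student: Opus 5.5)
The plan is to obtain Corollary \ref{tu} as a direct specialization of Theorem \ref{main2}. The work consists of identifying the test-function class $\mathfrak F$ and checking that $({\mathbf U}^1_{V})$, $({\mathbf U}^2_{V})$ together with Lemma \ref{esu} yield $({\mathbf U}^1_{\mathfrak F})$ and $({\mathbf U}^2_{\mathfrak F})$. The corresponding kernel $\Gamma_{\mathfrak F}$ of \eqref{Gamma-F} must coincide with $\Gamma$ in \eqref{tg}.

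We take $\mathfrak F=\sB_{1+V}(\mR^d)$, so that ${\bf d}_{\mathfrak F}={\bf d}_{1+V}$, when $\sigma$ is independent of $\mu$. When the diffusion depends on the law, we take $\mathfrak F={\bf C}^\alpha_{1+V}(\mR^d)$, so that ${\bf d}_{\mathfrak F}={\bf d}_{\alpha,1+V}$. In both cases ${\bf d}_{\mathfrak F}$ separates measures in $\sP_V$: locally H\"older functions bounded by $1+V$ include all of ${\bf C}_c^\infty$, hence determine a measure.

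Next, \eqref{F0} is verified. Integrating the pointwise bound in $({\mathbf U}^1_{V})$ against $\nu_0$ gives
\[
|\nu_0P_t^\ast f-\mu_\ast(f)|\le \ell_\ast(t)\,(1+\nu_0(V))\,\|f\|_{\sB_{1+V}},
\]
so $C_{\nu_0}=1+\nu_0(V)$, which is finite for admissible $\nu_0\in\sP_V$. In the H\"older case we use $\|f\|_{\sB_{1+V}}\le\|f\|_{{\bf C}^\alpha_{1+V}}$, which holds since the sup-norm on $B_1(x)$ is dominated by the ${\bf C}^\alpha(B_1(x))$ norm. For \eqref{F1} and \eqref{F2} we invoke Lemma \ref{esu}, setting $\Theta^\ast_{1,\mathfrak F}=\Theta_1^\ast$, $\Theta^\ast_{2,\mathfrak F}=\Theta_{2,\alpha}^\ast$, and taking the kernels \eqref{k1} and \eqref{k2}. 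Again the gradient bound in $\|f\|_{\sB_{1+V}}$ implies the same bound in the stronger $\|f\|_{{\bf C}^\alpha_{1+V}}$ norm, so a single class $\mathfrak F$ serves both estimates. The perturbation hypotheses $({\mathbf U}^2_V)$(a),(b) are then literally \eqref{HF-b-new}, \eqref{HF-a-new}, \eqref{HF-weight-b} and \eqref{HF-weight-a} with $\mathfrak D={\bf d}_{\mathfrak F}$, $\cC_{b,\mathfrak F}=\cC_b$ and $\cC_{a,\mathfrak F}=\cC_a$. Hence $\Gamma_{\mathfrak F}=\Gamma$ and $\Lambda_{\mathfrak F}=\int_0^\infty\Gamma<1$.

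Theorem \ref{main2} then gives that $\mu_\ast$ is the unique invariant measure among admissible laws, together with $\mathfrak D(\cL_{X_t},\mu_\ast)\to0$ and the rates. The only delicate point, which I expect to be the main obstacle, is reconciling ``unique in $\sP_V$'' with the admissibility notion. Any invariant $\mu\in\sP_V$ is admissible, since $C_\mu=1+\mu(V)<\infty$ and the moment bounds in $({\mathbf U}^2_V)$(b) are assumed for all solutions. Such a $\mu$ therefore satisfies $\mathfrak D(\mu,\mu_\ast)=\mathfrak D(\cL_{X_t},\mu_\ast)\to0$, so $\mu=\mu_\ast$ by separation. I would spell this out in one line rather than rely on the theorem's phrasing.
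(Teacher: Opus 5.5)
Your proposal is correct and follows the paper's own argument. Like the paper, you specialize Theorem~\ref{main2} with $\mathfrak F=\mathcal B_{1+V}$ in the drift-only case and $\mathfrak F=\mathbf C^\alpha_{1+V}$ when the diffusion depends on the law, take the kernels $\sK_1^\ast,\sK_{2,\alpha}^\ast$ from Lemma~\ref{esu}, and check that \eqref{Gamma-F} reduces to $\Gamma$. Your extra checks are sound and are details the paper leaves implicit: $C_{\nu_0}=1+\nu_0(V)$, $\|f\|_{\mathcal B_{1+V}}\le\|f\|_{\mathbf C^\alpha_{1+V}}$, separation via $\mathbf C_c^\infty$, and admissibility of invariant laws in $\mathscr P_V$.
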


\br
The kernel $\Gamma(t)$  quantifies the competition
between the strength of the
distribution dependence and the regularization/mixing of the frozen dynamics (\ref{ref-mustar}). Its first part comes from drift perturbations through gradient estimates, while
the second appears only when the diffusion coefficient depends on the distribution and
is governed by Hessian estimates  of the frozen semigroup. In view of (\ref{k1}) and (\ref{k2}), both components combine
short-time smoothing with the long-time decay of the reference semigroup.  Thus faster
mixing or stronger
regularization of the  frozen equation (\ref{ref-mustar}) allows stronger distribution dependence.
\er

\begin{proof}
	This is   a direct specialization of Theorem \ref{main2}. In the drift-only case, one takes
	$$
	\mathfrak F=\mathcal B_{1+V},
	\qquad
	{\bf d}_{\mathfrak F}={\bf d}_{1+V}.
	$$
	When the diffusion depends on the distribution, one takes
	$$
	\mathfrak F=\mathbf C^\alpha_{1+V},
	\qquad
	{\bf d}_{\mathfrak F}={\bf d}_{\alpha,1+V}
	$$
	The corresponding kernels are exactly
	$\sK_1^\ast$ and $\sK_{2,\alpha}^\ast$ , and the abstract anchored kernel \eqref{Gamma-F} reduces to \eqref{tg}.  The conclusion therefore follows immediately from Theorem \ref{main2}.
\end{proof}

\subsubsection*{(ii) The Wasserstein-\(1\) realization}

For mean-field models whose law dependence acts  through low-order moments, the weighted total variation framework may lead to
non-sharp thresholds because the kernel $\Gamma(t)$ carries the short-time singularities inherited from derivative estimates.
We now  present  a $W_1$ realization of Theorem \ref{main2}: one tests against Lipschitz functions,  and the perturbation kernel is determined
by the Lipschitz contraction of the frozen dynamics and thus  has no short-time singularity in gradient estimate. In many concrete phase-transition models, this captures the exact dissipative scale of the frozen dynamics and leads to the sharp bifurcation threshold.

We reformulate the assumptions $({\mathbf U}^1_{\mathfrak F})$ and $({\mathbf U}^2_{\mathfrak F})$ as the following simplified version:

\begin{enumerate}[$({\mathbf U}^1_{\operatorname{Lip}})$]
	\item Assume that the frozen dynamics (\ref{ref-mustar}) is contractive in the Lipschitz class: there
	exists a locally integrable function $\ell_1:\mathbb R_+\to\mathbb R_+$ with
	$\ell_1(t)\to0$ as $t\to\infty$ such that, for every Lipschitz continuous
	function $f$,
	\begin{equation}\label{W1c}
		\operatorname{Lip}(P_t^\ast f)
		\leq
		\ell_1(t)\operatorname{Lip}(f),
		\qquad t\geq0 .
	\end{equation}
	When the diffusion depends on the law,
	suppose further that
	there exist a locally bounded function
	$\Theta_{2,W_1}^\ast:\mathbb R^d\to[1,\infty)$ and a locally integrable
	kernel $\ell_2:\mathbb R_+\to\mathbb R_+$ such that
	\begin{align*}
		|\nabla_x^2 P_t^\ast f(x)|
		\leq
		\Theta_{2,W_1}^\ast(x)\ell_2(t)\operatorname{Lip}(f),
		\qquad t>0,\ x\in\mathbb R^d .
	\end{align*}
\end{enumerate}

\begin{enumerate}[$({\mathbf U}^2_{\operatorname{Lip}})$]
	\item
	\begin{enumerate}[(a)]
		\item
		There exist constants $\kappa_b,\kappa_a\geq0$ and weights $\Psi_b,\Psi_a\geq 0$ such that
		\begin{equation*}\label{tb}
			|b(x,\mu)-b(x,\mu_\ast)|
			\leq
			\kappa_b \Psi_b(x)W_1(\mu,\mu_\ast),
		\end{equation*}
		and
		\begin{equation*}\label{ta}
			|a(x,\mu)-a(x,\mu_\ast)|
			\leq
			\kappa_a \Psi_a(x)W_1(\mu,\mu_\ast).
		\end{equation*}

		\item There exist constants
		$\cC_b, \cC_a<\infty$ such that for every $t\geq0$,
		\begin{equation*}\label{tc1}
			\mathbb E\big[\Psi_b(X_t)
			\big]
			\leq \cC_b<\infty,
		\end{equation*}
		and, when $\kappa_a>0$,
		\begin{equation*}\label{tc2}
			\mathbb E\big[
			\Theta_{2,W_1}^\ast(X_t)\Psi_a(X_t)
			\big]
			\leq \cC_a<\infty.
		\end{equation*}
	\end{enumerate}
\end{enumerate}

Note that (\ref{W1c}) implies that the frozen semigroup is ergodic in $W_1$ distance with rate $\ell_1(t)$. Set
\begin{align}\label{W1-Gamma}
	\Gamma_{W_1}(t)
	:=
	\kappa_b \cC_b \ell_1(t)
	+
	\kappa_a \cC_a \ell_2(t),
	\qquad t>0 .
\end{align}
If
$\sigma$ is independent of $\mu$, then  $\kappa_a=0$. The following result follows directly from Theorem \ref{main2}.

\begin{corollary}[$W_1$ version]\label{cor:W1-unique}
	Assume that \(({\mathbf U}^1_{\operatorname{Lip}})\) and
\(({\mathbf U}^2_{\operatorname{Lip}})\) hold.
	If
	\begin{align}\label{W1-small-general}
		\int_0^\infty \Gamma_{W_1}(s)\dif s
		<1,
	\end{align}
	then $\mu_\ast$ is the unique invariant  measure for the McKean--Vlasov SDE (\ref{sde0}) in
	$\mathscr P_V(\mR^d)$, and
	\begin{equation*}
		W_1(\mathcal L_{X_t},\mu_\ast)
		\longrightarrow0,
		\qquad t\to\infty .
	\end{equation*}
	Moreover, the exponential and polynomial rates follow from Theorem
	\ref{main2} with $\Gamma_{\mathfrak F}=\Gamma_{W_1}$ and $\ell_\ast(t)=\ell_1(t)$.
\end{corollary}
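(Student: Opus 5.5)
The plan is to obtain Corollary \ref{cor:W1-unique} as a direct specialization of Theorem \ref{main2}, taking $\mathfrak F=\mathrm{Lip}(\mR^d)$ with the seminorm $\|f\|_{\mathfrak F}=\operatorname{Lip}(f)$. By Kantorovich--Rubinstein duality, ${\bf d}_{\mathfrak F}=W_1$ on $\sP_1(\mR^d)$, and it separates measures there. Since $V\geq 1+|x|$ (or more generally $\sP_V\subset\sP_1$) in the intended applications, all admissible laws lie in this class. What remains is to check $({\mathbf U}^1_{\mathfrak F})$ and $({\mathbf U}^2_{\mathfrak F})$ and to identify the kernel $\Gamma_{\mathfrak F}$ with $\Gamma_{W_1}$.

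\emph{Step 1: ergodicity, i.e.\ $({\mathbf U}^1_{\mathfrak F})$-(a).} Let $\bar X_t(\xi)$ start from $\nu_0$ and $\bar X_t(\eta)$ from $\mu_\ast$, where $\eta\sim\mu_\ast$ and $(\xi,\eta)$ is an optimal coupling. Invariance of $\mu_\ast$ for $P^\ast_t$ (the frozen equation at $\mu_\ast$ has $\mu_\ast$ as an invariant measure, since $\mu_\ast$ is invariant for \eqref{sde0}) together with \eqref{W1c} gives
\[
|\nu_0P^\ast_tf-\mu_\ast(f)|=\Big|\int (P_t^\ast f(x)-P_t^\ast f(y))\,\pi(\dif x,\dif y)\Big|\leq \ell_1(t)W_1(\nu_0,\mu_\ast)\operatorname{Lip}(f).
\]
So \eqref{F0} holds with $\ell_\ast=\ell_1$ and $C_{\nu_0}=W_1(\nu_0,\mu_\ast)$, which is finite for admissible $\nu_0$. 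If Theorem \ref{main2} requires $\ell_\ast$ to be decreasing, I would replace $\ell_1$ by $\sup_{s\geq t}\ell_1(s)$. This does not affect the conclusions in the cases where $\ell_1$ is already monotone, such as the exponential case.

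\emph{Step 2: derivative bounds and weights, i.e.\ $({\mathbf U}^1_{\mathfrak F})$-(b) and $({\mathbf U}^2_{\mathfrak F})$.} Rademacher's theorem and \eqref{W1c} give $|\nabla P^\ast_tf(x)|\leq \ell_1(t)\operatorname{Lip}(f)$ almost everywhere; interior regularity of $P^\ast_t f$ for $t>0$ under $(\mathbf H_0)$ upgrades this to everywhere. Hence \eqref{F1} holds with $\Theta^\ast_{1,\mathfrak F}\equiv1$ and $\sK^\ast_{1,\mathfrak F}=\ell_1$. The Hessian bound in $({\mathbf U}^1_{\operatorname{Lip}})$ is exactly \eqref{F2}, with $\Theta^\ast_{2,\mathfrak F}=\Theta^\ast_{2,W_1}$ and $\sK^\ast_{2,\mathfrak F}=\ell_2$. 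Because $\Theta^\ast_{1,\mathfrak F}\equiv1$, the moment condition \eqref{HF-weight-b} reduces to $\mE\Psi_b(X_t)\leq\cC_b$, and \eqref{HF-weight-a} is as assumed. Substituting into \eqref{Gamma-F} gives $\Gamma_{\mathfrak F}=\Gamma_{W_1}$, so \eqref{small-F-main} becomes \eqref{W1-small-general}.

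\emph{Step 3: conclusion.} Theorem \ref{main2} now yields uniqueness of $\mu_\ast$ and $W_1(\cL_{X_t},\mu_\ast)\to0$. Its parts (i) and (ii), applied with $\ell_\ast=\ell_1$, give the exponential and polynomial rates.

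I expect the main obstacle to be the interface with Theorem \ref{main2} rather than any new estimate. First, $\|\cdot\|_{\mathfrak F}$ is only a seminorm, because constants have zero Lipschitz constant. This is harmless since constants integrate to zero against $\mu-\nu$, but it means ${\bf d}_{\mathfrak F}$ is finite only on $\sP_1$; uniqueness must therefore be read inside $\sP_V\subset\sP_1$, which is how the corollary is phrased. Second, Theorem \ref{main2} presumably uses $P^\ast_tf$ in an It\^o/Duhamel formula along the nonlinear flow, and that needs $f\in\mathfrak F$ with $P^\ast_tf$ regular enough. I would handle this by approximating Lipschitz $f$ with smooth functions $f_n$ that have the same Lipschitz bound, and passing to the limit by dominated convergence using the first moments of $X_t$.
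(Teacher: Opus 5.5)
Your proposal is correct and follows the paper's proof. The paper likewise obtains the corollary as a direct specialization of Theorem \ref{main2} with $\mathfrak F=\mathrm{Lip}(\mathbb R^d)$ and ${\bf d}_{\mathfrak F}=W_1$, so that $\Theta^\ast_{1,\mathfrak F}\equiv1$, $\sK^\ast_{1,\mathfrak F}=\ell_1$, $\sK^\ast_{2,\mathfrak F}=\ell_2$ and $\Gamma_{\mathfrak F}=\Gamma_{W_1}$. Your coupling check of \eqref{F0} with $C_{\nu_0}=W_1(\nu_0,\mu_\ast)$, and your remarks on the seminorm and on $\sP_V\subset\sP_1$, spell out what the paper leaves implicit. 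Note that the monotonicity of $\ell_\ast$ is never used, since Lemma \ref{gron} only needs $\ell$ locally bounded with $\ell(t)\to0$, so the running-supremum replacement is unnecessary.
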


\begin{remark}
	Suppose that the diffusion coefficient is independent of the distribution, and  the drift perturbation is globally controlled by
	\[
	|b(x,\mu)-b(x,\mu_\ast)|
	\leq
	\kappa_b W_1(\mu,\mu_\ast),
	\]
	then one may take $\kappa_a=0$, $\Psi_b\equiv1$ and $\cC_b=1$.  Thus the smallness condition (\ref{W1-small-general})
	becomes
	\[
	\kappa_b\int_0^\infty \ell_1(s)\dif s<1.
	\]
	In particular, if $\ell_1(t)=\e^{-\lambda t}$, this reduces to
	$
	\kappa_b<\lambda.
	$
	This identity has a direct dynamical interpretation: the equilibrium is
	stable whenever the contractive response of the reference dynamics
	dominates the mean-field feedback. No loss is introduced by short-time
	smoothing estimates. In Ornstein--Uhlenbeck-type mean-field models, this
	balance may coincide with the intrinsic stability or bifurcation threshold.
\end{remark}

\begin{proof}
	This is a direct specialization of Theorem \ref{main2} with
	\[
	\mathfrak F=\mathrm{Lip}(\mathbb R^d),
	\qquad
	{\bf d}_{\mathfrak F}=W_1 .
	\]
	The perturbation kernel  \eqref{Gamma-F} becomes exactly
	(\ref{W1-Gamma}).
	The smallness condition \eqref{W1-small-general} is precisely  (\ref{small-F-main}).  Hence  the conclusion follows immediately from Theorem \ref{main2}.
\end{proof}

\begin{remark}[A robustness--sharpness trade-off]\label{off}
	The weighted total variation/H\"older-dual and Wasserstein criteria describe
	different stabilization mechanisms and should be viewed as complementary.
	The former exploits noise-induced regularization and therefore remains
	available for singular spatial coefficients, but the resulting kernel contains
	short-time  singularities. The latter
	requires additional Lipschitz or one-sided Lipschitz structure, but it
	directly captures the contraction scale of the frozen dynamics and may
	therefore yield a sharper threshold.
\end{remark}

\subsubsection{Local anchored uniqueness and attraction}
The anchored assumptions in Theorem~\ref{main2} are global in the
sense that the coefficient perturbation estimates around \(\mu_\ast\) in \(({\mathbf U}_{\mathfrak F}^2)\) (in particular,
(\ref{HF-b-new}) and (\ref{HF-a-new}))  are required to hold for every admissible probability measure. Such a condition
yields global uniqueness  and attraction from
all admissible initial values.
In a phase-transition regime, however, several invariant measures may
coexist, and no global anchored smallness condition can hold around any one
of them. We therefore formulate
a local version of the anchored  principle, which yields
local uniqueness and local attraction around an individual invariant
measure and is particularly useful in phase-transition regimes.

Let \(\mu_\ast\) be an invariant  measure and
\({\bf d}_{\mathfrak F}\) be a dual distance defined by (\ref{dF-new}). For \(r>0\), set
\[
B_r^{\mathfrak F}(\mu_\ast)
:=
\left\{
\mu\in\sP(\mR^d):
{\bf d}_{\mathfrak F}(\mu,\mu_\ast)<r
\right\}.
\]
We make the following assumption.

\begin{enumerate}[$({\mathbf U}^{\mathrm{loc}}_{\mathfrak F,r})$]
	\item Assume that, for some \(r>0\) the frozen semigroup at \(\mu_\ast\) satisfies
	\begin{equation}\label{local2}
		{\bf d}_{\mathfrak F}
		\big(
		{\nu_0} P_t^\ast,\mu_\ast
		\big)
		\leq
		\ell_\ast(t){\bf d}_{\mathfrak F}({\nu_0},\mu_\ast),
		\qquad t\geq0, \quad
		{\nu_0}\in B_r^{\mathfrak F}(\mu_\ast),
	\end{equation}
	where ${\nu_0} P_t^\ast$ is defined by (\ref{up}),
	\[
	L_\ast
	:=
	\sup_{t\geq0}\ell_\ast(t)<\infty,
	\qquad
	\ell_\ast(t)\longrightarrow0
	\quad\text{as }t\to\infty.
	\]
	The coefficient perturbation assumptions
	\(({\mathbf U}_{\mathfrak F}^2)\) hold for every
	\(\mu\in B_r^{\mathfrak F}(\mu_\ast)\), with a local perturbation kernel
	\[
	\Gamma_{\mathfrak F,r}(t)
	:=
	\kappa_b(r)\mathcal C_{b,\mathfrak F}(r)
	\sK_{1,\mathfrak F}^\ast(t)
	+
	\kappa_a(r)\mathcal C_{a,\mathfrak F}(r)
	\sK_{2,\mathfrak F}^\ast(t),
	\]
	with the second term omitted when $\kappa_a(r)=0$.
\end{enumerate}

\begin{theorem}[Local anchored uniqueness and attraction]
	\label{local-anchored}
	Assume $({\mathbf U}^{\mathrm{loc}}_{\mathfrak F,r})$ holds and
	\begin{equation*}
		\Lambda_{\mathfrak F,r}
		:=
		\int_0^\infty
		\Gamma_{\mathfrak F,r}(s)\,\dif s
		<1.
	\end{equation*}
	Then \(\mu_\ast\) is the unique invariant
measure in
	\(B_r^{\mathfrak F}(\mu_\ast)\).
	Moreover, let
	\begin{equation}\label{local-initial-radius}
		0<r_0<
		\frac{1-\Lambda_{\mathfrak F,r}}{L_\ast}\,r.
	\end{equation}
	If  \(\xi\) is an admissible initial random variable with $\nu_0:=\cL_{\xi}$ and
	\(
	{\bf d}_{\mathfrak F}(\nu_0,\mu_\ast)\leq r_0,
	\)
	then the distribution  $\mathcal L_{X_t}$ of the solution of McKean--Vlasov (\ref{sde0}) remains in
	\(B_r^{\mathfrak F}(\mu_\ast)\) for all \(t\geq0\), and
	\begin{equation*}
		{\bf d}_{\mathfrak F}(\mathcal L_{X_t},\mu_\ast)
		\longrightarrow0,
		\qquad t\to\infty.
	\end{equation*}
	
	Meanwhile, the exponential and polynomial estimates of
	Theorem~\ref{main2} \textup{(i)--(ii)}, including the
	resolvent-kernel alternative, remain valid with
	\[
	\Gamma_{\mathfrak F}
	\ \text{replaced by }\Gamma_{\mathfrak F,r},
	\qquad
	C_{\nu_0}
	\ \text{replaced by }
	{\bf d}_{\mathfrak F}(\nu_0,\mu_\ast),
	\]
	provided the corresponding decay and weighted-kernel
	conditions hold.
\end{theorem}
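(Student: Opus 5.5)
We run the proof of Theorem~\ref{main2} on the ball $B_r^{\mathfrak F}(\mu_\ast)$, closing it with a continuity (bootstrap) argument. The basic ingredient is the Duhamel-type representation comparing the nonlinear flow with the anchored frozen semigroup. Let $\mu_t:=\mathcal L_{X_t}$ and $f\in\mathfrak F$ with $\|f\|_{\mathfrak F}\le1$. Apply the generalized It\^o formula to $s\mapsto P^\ast_{t-s}f(X_s)$ on $[0,t]$; the global SDE perturbation estimates of Section~3 justify this under the present weak regularity. This gives
\begin{align*}
\mu_t(f)-\mu_\ast(f)
&=\big(\nu_0P_t^\ast f-\mu_\ast(f)\big)
+\int_0^t\mathbb E\big[(b(X_s,\mu_s)-b(X_s,\mu_\ast))\cdot\nabla P^\ast_{t-s}f(X_s)\big]\dif s\\
&\quad+\int_0^t\mathbb E\big[\tr\big((a(X_s,\mu_s)-a(X_s,\mu_\ast))\nabla^2P^\ast_{t-s}f(X_s)\big)\big]\dif s .
\end{align*}
Suppose $\mu_s\in B_r^{\mathfrak F}(\mu_\ast)$ for $s\le t$. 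Then the local version of $({\mathbf U}^2_{\mathfrak F})$ applies, together with \eqref{F1}--\eqref{F2} and \eqref{local2}. Writing $\phi(t):={\bf d}_{\mathfrak F}(\mu_t,\mu_\ast)$ and taking the supremum over $f$, we obtain the Volterra inequality
\begin{equation*}
\phi(t)\le \ell_\ast(t)\phi(0)+\int_0^t\Gamma_{\mathfrak F,r}(t-s)\phi(s)\dif s .
\end{equation*}

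\emph{Uniqueness.} Let $\mu\in B_r^{\mathfrak F}(\mu_\ast)$ be invariant and start the dynamics from $\mu$. Then $\mu_t\equiv\mu$ stays in the ball and $\phi\equiv\phi_0:={\bf d}_{\mathfrak F}(\mu,\mu_\ast)$. The inequality gives $\phi_0\le\ell_\ast(t)\phi_0+\Lambda_{\mathfrak F,r}\phi_0$. Letting $t\to\infty$ yields $(1-\Lambda_{\mathfrak F,r})\phi_0\le0$, so $\phi_0=0$. Since ${\bf d}_{\mathfrak F}$ separates measures, $\mu=\mu_\ast$.

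\emph{Trapping.} This is the main obstacle. Let ${\bf d}_{\mathfrak F}(\nu_0,\mu_\ast)\le r_0$ and define $T:=\sup\{t:\phi(s)<r\ \text{for all } s\le t\}$. For $t<T$ we set $\Phi(t):=\sup_{s\le t}\phi(s)$, and the inequality gives
\[
\Phi(t)\le L_\ast r_0+\Lambda_{\mathfrak F,r}\Phi(t),
\qquad\text{hence}\qquad
\Phi(t)\le \frac{L_\ast r_0}{1-\Lambda_{\mathfrak F,r}}<r
\]
by \eqref{local-initial-radius}. The bound is uniform in $t<T$. To conclude $T=\infty$ we need $t\mapsto\phi(t)$ to be continuous, or at least right-continuous with no jump out of the ball. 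I would first try to obtain this from weak continuity of $t\mapsto\mu_t$ together with the uniform weighted moment bounds in $({\mathbf U}^2_{\mathfrak F})$-(b). If that is not enough, I would instead apply the Duhamel estimate on $[t,t+h]$, which gives $\phi(t+h)\le\phi(t)+o(1)$. A further difficulty is that $\phi(0)>0$ must be allowed. The requirement that the ball be open while the bound is strict is exactly why $r_0$ is taken strictly below $(1-\Lambda)r/L_\ast$.

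\emph{Convergence and rates.} Once the solution is trapped, the Volterra inequality holds for all $t\ge0$. From here the argument is the same resolvent argument as in Theorem~\ref{main2}, with $C_{\nu_0}$ replaced by $\phi(0)$ because of \eqref{local2}. Set $\bar\phi:=\limsup_{t\to\infty}\phi(t)$, which is finite by the trapping bound. Split the convolution at $t-S$: the tail of $\Gamma_{\mathfrak F,r}$ beyond $S$ has small $L^1$ norm, and on $[t-S,t]$ we use the $\limsup$. This gives $\bar\phi\le\Lambda_{\mathfrak F,r}\bar\phi$, hence $\bar\phi=0$.

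For the exponential rate, multiply the inequality by $\e^{\theta t}$ and use $\e^{\theta t}\ell_\ast(t)\le1$ to get a sup bound with constant $1/(1-\Lambda_{\mathfrak F,r,\theta})$. For the polynomial rate, multiply by $(1+t)^\gamma$ and use the weighted kernel constant $\Lambda_{\mathfrak F,r,\gamma}$. In the resolvent alternative, iterate the inequality to obtain $\phi\le \ell_\ast\phi(0)+\sR\ast(\ell_\ast\phi(0))$. Then $(1+t)^\gamma\le(1+t-s)^\gamma(1+s)^\gamma$ gives the stated bound, verbatim as in the global case.
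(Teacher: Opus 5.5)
Your proposal follows the paper's proof essentially step for step: the same Volterra inequality from comparing the nonlinear flow with the anchored frozen semigroup via \eqref{local2} and the local perturbation bounds, the same stationary-flow argument for local uniqueness, the same sup-bootstrap up to the exit time, and then Lemma~\ref{gron}. Your $\limsup$ splitting of the convolution is only a direct substitute for the resolvent step. The right-continuity of $t\mapsto{\bf d}_{\mathfrak F}(\mathcal L_{X_t},\mu_\ast)$ at the exit time that you flag is genuinely needed, and the paper passes over it, simply asserting that the bootstrap bound excludes a finite exit time. However, your fallback of a Duhamel estimate on $[t,t+h]$ is circular as stated: the local perturbation bounds on $[t,t+h]$ already presuppose that the law stays in $B_r^{\mathfrak F}(\mu_\ast)$ there, and \eqref{local2} only yields the factor $\ell_\ast(h)$ rather than $1+o(1)$.
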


\begin{remark}
	(i) Theorem~\ref{local-anchored} provides  a quantitative attracting neighborhood
	contained in the basin of \(\mu_\ast\), but it does not  identify
	the entire basin. This  is intrinsic: local
	perturbation estimates cannot, by themselves, determine the global geometry
	of the nonlinear law flow. To obtain basin-dependent convergence from a larger set of
	initial conditions, one needs additional   information showing that the
	law flow eventually enters this local attracting neighborhood, then   the
	anchored estimate applies from that entrance time onward and yields a
	quantitative convergence rate. In models with
	a closed  order-parameter equation, Lyapunov monotonicity, or invariant sign regions, this entrance property
	can often be verified directly. The Curie--Weiss model below provides an
	explicit example.
	
	\smallskip
	(ii) The above result relies on the linear contraction estimate
	\eqref{local2}, which provides a uniform contraction factor
	\(L_\ast\). If only a weaker
	pointwise   estimate
	holds:
	\[
	{\bf d}_{\mathfrak F}({\nu_0} P_t^\ast,\mu_\ast)
	\le C_{\nu_0}\,\ell_\ast(t),
	\qquad t\ge0,\quad {\nu_0}\in B_r^{\mathfrak F}(\mu_\ast),
	\]
	with \(C_{\nu_0}\) depending on \({\nu_0}\) and \(\ell_\ast(t)\to0\), then the same proof of local uniqueness remains valid, and for any individual law \(\mathcal L_{X_t}\) that is known to stay inside \(B_r^{\mathfrak F}(\mu_\ast)\)
	for all times,  Lemma~\ref{gron} yields $
	{\bf d}_{\mathfrak F}(\mathcal L_{X_t},\mu_\ast)
	$ convergence to zero with the
	same rate as \(\ell_\ast\). However, without the contraction
	\eqref{local2}, one cannot guarantee that a small initial
	neighborhood remains invariant; the above theorem's uniform
	attracting neighborhood \(r_0\) is therefore lost. Thus the
	stronger assumption is essential for a genuine local stability
	theorem, whereas the weaker one suffices for conditional attraction
	when entrance into the local ball is known \emph{a priori}.
\end{remark}

\subsection{Non-symmetric singular granular media dynamics}

We now apply the abstract framework to a class of
non-symmetric granular media equations. Besides providing concrete existence
criteria under weak regularity and structural assumptions, this example provides a concrete comparison between two
topologies that detect different stabilization mechanisms.

More precisely, consider the McKean--Vlasov SDE
\begin{align}\label{gran}
	\dif X_t
	=
	\left[
	-\alpha X_t
	+
	\int_{\mathbb R^d}F(X_t,y)\,\mathcal L_{X_t}(\dif y)
	\right]\dif t
	+
	\sigma\,\dif W_t,
\end{align}
where $\alpha>0$, $\sigma$ is a constant non-degenerate matrix, and
$F:\mathbb R^d\times\mathbb R^d\to\mathbb R^d$ is a measurable interaction
kernel.
We first record two sufficient conditions for existence of invariant measures. The first   covers
rough interactions that are uniformly bounded in   local $L^p$
spaces, while the second allows polynomial growth in both the state and measure variables and is formulated through the one-level Lyapunov balance.

\begin{theorem}[Existence of invariant measures]
	\label{coe}
	The following assertions
	hold.
	
	\begin{enumerate}[(i)]
		\item {\it ($L^p$-interaction).}
		Assume that
		for some \(p\in(d,\infty]\),
		\begin{equation*}
			\sup_{y\in\mathbb R^d}
			\|F(\cdot,y)\|_{L^p}
			<\infty,
		\end{equation*}
		and for every \(R>0\), the map
		\(
		y\longmapsto F(\cdot,y)
		\)
		is continuous from \(\mathbb R^d\) into
		\(L^p(B_R)\).
		Then \eqref{gran} admits at least one invariant  measure.
		
		\item {\it (Interaction with polynomial growth).}
		Let $r\geq2$. Assume that there exist $\delta\in(0,r)$ and, for every
		$R>0$, a constant $C_R>0$ such that
		$$
		|F(x,y)|
		\leq
		C_R(1+|y|^\delta),
		\qquad |x|\leq R,
		$$
		and the map
		\(
		y\longmapsto F(\cdot,y)
		\)
		is continuous from \(\mathbb R^d\) into
		\(L^\infty(B_R)\).
		Assume moreover that there exist constants
		$
		0\leq r_2<2,  0\leq r_4<r,
		$
		and constants $c_1,c_2>0$, $c_3,c_4\geq0$, such that for every
		$\mu\in\mathscr P_r(\mathbb R^d)$,
		$$
		\begin{aligned}
			&
			2\left\langle x,
			-\alpha x+
			\int_{\mathbb R^d}F(x,y)\mu(\dif y)
			\right\rangle
			+
			(r-1)\|\sigma\|_{\mathrm{HS}}^2
			\\
			&\qquad\leq
			-c_1|x|^2
			+
			c_2|x|^{r_2}
			\left(
			\|\mu\|_r^{r_4}+c_3
			\right)
			+
			c_4.
		\end{aligned}
		$$
		Suppose , in addition, that one of the balance conditions in
		$(\tilde{\mathbf H}_1)$ is satisfied with
		\(
		r_1=2, r_3=r.
		\)
		Then (\ref{gran}) admits at least one invariant
		measure
		in $\mathscr P_r(\mathbb R^d)$.
	\end{enumerate}
\end{theorem}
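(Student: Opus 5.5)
Both parts reduce to the abstract existence results already stated: part (i) to Corollary \ref{sing}, and part (ii) to Corollary \ref{cor1}. Throughout, the diffusion coefficient $\sigma$ is a constant non-degenerate matrix. Hence it is bounded, uniformly elliptic and trivially in $\mathbf C^\alpha_b$, independent of $\mu$, and every condition on $a$ in $(\mathbf H_0)$, $(\mathbf{\tilde H_2})$ and $(\mathbf{\tilde H}^{\rm sing}_1)$ holds automatically.

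\emph{Part (i).} Write the drift as $b_0(x,\mu)=-\alpha x$ and $b_1(x,\mu)=\int F(x,y)\mu(\dif y)$, and apply Corollary \ref{sing}.

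1. For $(\mathbf{\tilde H}^{\rm sing}_1)$ I take $r_1=r_3=2$, $r_2=0$ and $r_4=0$. Then
\[
2\langle x,-\alpha x\rangle+\|\sigma\|_{\rm HS}^2=-2\alpha|x|^2+\|\sigma\|_{\rm HS}^2,
\]
which is the subcritical case with $c_2$ arbitrarily small.
2. The pair $(\sigma,b_0)$ satisfies $(\mathbf H_0)$ because $b_0$ is smooth. It satisfies $(\mathbf{\tilde H_2})$ because $b_0$ does not depend on $\mu$. The growth bound \eqref{sing-b0-growth} holds since $|b_0(x)|\le\alpha|x|=\alpha|x|^{r_1-1}$.
3. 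For $(\mathbf{\tilde H}^{\rm sing}_2)$, Minkowski's integral inequality gives
\[
\|b_1(\cdot,\mu)\|_{L^p}\le\int\|F(\cdot,y)\|_{L^p}\,\mu(\dif y)\le\sup_y\|F(\cdot,y)\|_{L^p},
\]
uniformly in $\mu$.
4. For the continuity, fix $R$ and suppose $\mu_n\Rightarrow\mu$. The map $G:y\mapsto F(\cdot,y)$ is bounded and continuous into the Banach space $L^p(B_R)$, and $b_1(\cdot,\mu)|_{B_R}=\int G\,\dif\mu$ as a Bochner integral. I need $\int G\,\dif\mu_n\to\int G\,\dif\mu$ in norm. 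Tightness of $(\mu_n)$ gives a compact $K$ with $\mu_n(K^c)<\varepsilon$ for all $n$. On $K$, the image $G(K)$ is compact, so it is covered by finitely many $\varepsilon$-balls. Approximate $G$ on $K$ by a finite sum $\sum\phi_j(y)g_j$ with continuous scalar weights $\phi_j$ (a partition of unity) and fixed $g_j\in L^p(B_R)$. Each scalar integral $\int\phi_j\,\dif\mu_n$ converges by weak convergence, and the remainder is $O(\varepsilon)$ because $G$ is uniformly bounded. For $p=\infty$ the same argument works in $L^\infty(B_R)$.

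This vector-valued weak-convergence step is the only nonroutine point of part (i); everything else is bookkeeping. Corollary \ref{sing} then yields an invariant measure.

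\emph{Part (ii).} Apply Corollary \ref{cor1} with $r_3=r$, $r_1=2$, and $r_2,r_4$ as given.

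1. $(\mathbf H_0)$ holds with alternative (ii): $\sigma$ is constant, and $b(\cdot,\mu)$ is locally bounded because $|F(x,y)|\le C_R(1+|y|^\delta)$ with $\delta<r$ and $\mu\in\mathscr P_r$.
2. The displayed inequality is precisely the hypothesis of $(\mathbf{\tilde H_1})$ with $1+r_3-r_1=r-1$. The constant $c_2c_3|x|^{r_2}$ can be absorbed into $\tfrac{c_1}{2}|x|^2+C$ because $r_2<2$. The assumed balance condition then supplies one of the cases (i)–(iii) of $(\mathbf{\tilde H_1})$.
3. It remains to verify $(\mathbf{\tilde H_2})$ on $\mathscr P^{M_0}_{1+|x|^r}$. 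Fix $R$ and suppose $\mu_n\Rightarrow\mu$ with uniformly bounded $r$-th moments. Write $G(y)=F(\cdot,y)\in L^\infty(B_R)$; this map is continuous with $\|G(y)\|\le C_R(1+|y|^\delta)$.
4. Truncate: split $G=G\chi_L+G(1-\chi_L)$ with a continuous cutoff $\chi_L$ supported in $B_{2L}$ and equal to $1$ on $B_L$. The tail satisfies
\[
\sup_n\int_{|y|>L}\|G(y)\|\,\mu_n(\dif y)\le C_R\sup_n\int_{|y|>L}(1+|y|^\delta)\,\mu_n(\dif y)\le C_R M_0 L^{\delta-r}\to0
\]
as $L\to\infty$, since $\delta<r$; this is the uniform-integrability remark after $(\mathbf H_2)$.
5. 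The truncated part $G\chi_L$ is bounded and continuous, so the vector-valued argument from part (i) applies to it.

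Corollary \ref{cor1} gives an invariant measure in $\mathscr P^{M_0}_{1+|x|^r}\subset\mathscr P_r$.

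The main obstacle is this continuity in $\mu$, namely the vector-valued weak convergence, and in part (ii) combining it with the moment truncation. The Lyapunov verifications are direct.
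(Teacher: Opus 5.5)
Your proposal is correct and follows the paper's proof. Part (i) goes through Corollary~\ref{sing} with $b_0=-\alpha x$ and $b_1(\cdot,\mu)=\int F(\cdot,y)\mu(\dif y)$, using Minkowski's inequality for the uniform $L^p$ bound and weak convergence for the $L^p(B_R)$-continuity; part (ii) goes through Corollary~\ref{cor1}, with $(\tilde{\mathbf H}_2)$ checked by the same compactly supported cutoff plus a moment tail of order $L^{\delta-r}$.

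There is one small slip in part (ii). When you absorb $c_2c_3|x|^{r_2}$, use $\varepsilon|x|^2+C_\varepsilon$ with $\varepsilon<c_1-c_2$ instead of $\tfrac{c_1}{2}|x|^2+C$. With your choice, the critical condition $c_2<c_1$ is not preserved. In the supercritical case, the balance condition has to be read for the absorbed constants, which the paper also leaves implicit.
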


We next turn to uniqueness and convergence to equilibrium.
The two criteria
below rely on different mechanisms, depending on which topology is used to measure the law dependence. This is where the abstract theory reveals its strongest structural message: the optimal distance is model-dependent, and different choices capture different mechanisms of stabilization. The first criterion is formulated in total variation and uses the regularization effect of the non-degenerate noise, whereas the second criterion  is formulated in \(W_1\) and exploits synchronous contraction of the frozen  dynamics.

\begin{theorem}[Topology-adapted uniqueness and ergodicity]
	\label{ou}
	The following assertions hold.
	
	\begin{enumerate}[(i)]
		
		\item {\it (Total variation criterion under an $L^p$-interaction).}
		Let $p\in(d,\infty]$ and set
		$
		q:=p/(p-1),
		$ with the convention $q=1$ when $p=\infty$.
		Assume that
		\begin{align}\label{gran-Mp}
			M_p
			:=
			\sup_{y\in\mathbb R^d}
			\|F(\cdot,y)\|_{L^p(\mathbb R^d)}
			<\infty.
		\end{align}
		Let $\mathfrak C_{p,M_p}$ and $A_{\alpha,\sigma,p}$ be constants given by \eqref{invariant-density-constant} and (\ref{Ap}). If
		\begin{align}\label{gran-Lp-uniqueness-small}
			M_p
			\left(
			A_{\alpha,\sigma,p}
			+
			\mathfrak C_{p,M_p}
			A_{\alpha,\sigma,\infty}
			\right)
			<1,
		\end{align}
		then \eqref{gran} admits a unique invariant
		measure
		$\mu_\ast$.
		Moreover, for every  admissible initial value $\xi$,
		there exist  constants
		$C>0$ and $\lambda>0$ such that
		$$
			\|\mathcal L_{X_t}-\mu_\ast\|_{\rm TV}
			\leq
			C \e^{-\lambda t},
			\qquad t\geq0.
			$$
		
		In particular, when $p=\infty$, one has
		\[
		\mathfrak C_{\infty,M_\infty}=1,
		\qquad
		A_{\alpha,\sigma,\infty}
		=
		\|\sigma^{-1}\|\sqrt{\frac{\pi}{\alpha}},
		\]
		and \eqref{gran-Lp-uniqueness-small} reduces to
		\[
		\|F\|_\infty
		<
		\frac12
		\|\sigma^{-1}\|^{-1}
		\sqrt{\frac{\alpha}{\pi}}.
		\]

		\item \textbf{$W_1$-criterion.}
		Assume that
		$$
		\mathfrak{L}_y
		:=
		\sup_{x\in\mathbb R^d}
		\operatorname{Lip}\big(F(x,\cdot)\big)
		<\infty,
		$$
		and
		$$
		\mathfrak{L}_x
		:=
		\sup_{x,y\in\mathbb R^d}
		\lambda_{\max}
		\left(
		\frac{\nabla_xF(x,y)+\nabla_xF(x,y)^\ast}{2}
		\right)
		<\infty.
		$$
		If
		\begin{align}\label{sh}
			\mathfrak{L}_x+\mathfrak{L}_y<\alpha,
		\end{align}
		then \eqref{gran} admits a unique invariant
		measure $\mu_\ast$. Moreover, for every initial value $\xi$,
		$$
		W_1(\mathcal L_{X_t},\mu_\ast)
		\leq
		\e^{-(\alpha-\mathfrak{L}_x-\mathfrak{L}_y)t}
		W_1(\cL_{\xi},\mu_\ast),
		\qquad t\geq0.
		$$
		In particular,
		\begin{enumerate}
			\item if
			$
			F(x,y)=\bar F(x-y),
			$
			then (\ref{sh}) is implied by
			$
			\|\nabla \bar F\|_\infty<\alpha/2.
			$
			\item If $F(x,y)=\bar F(y),$ then (\ref{sh}) is implied by
			$
			\|\nabla\bar F\|_\infty<\alpha.
			$
		\end{enumerate}
	\end{enumerate}
\end{theorem}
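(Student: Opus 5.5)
For part (i), I will use Corollary~\ref{tu} with $V\equiv 1$, i.e.\ in the total variation framework. The diffusion is a constant matrix, so $\kappa_a=0$. Existence of an invariant measure $\mu_\ast$ comes from Theorem~\ref{coe}(i).

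The drift perturbation is
\[
b(x,\mu)-b(x,\mu_\ast)=\int F(x,y)\,(\mu-\mu_\ast)(\dif y),
\]
which is bounded pointwise by $\sup_y|F(x,y)|\,\|\mu-\mu_\ast\|_{\rm TV}$. When $p<\infty$ this weight is not bounded, so I will not use it directly. Instead I will run the perturbation argument of Theorem~\ref{main2} at the level of the Duhamel formula, splitting the perturbation term into two parts:
\[
\int_0^t \mathbb E\big[\nabla P^\ast_{t-s}f(X_s)\cdot(b(X_s,\cL_{X_s})-b(X_s,\mu_\ast))\big]\dif s
\]
is bounded by the part coming from the frozen invariant density plus a remainder. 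The frozen semigroup $P_t^\ast$ is that of an Ornstein--Uhlenbeck process with a bounded-in-$L^p$ drift perturbation. I will use two ingredients:
\begin{itemize}
\item a gradient estimate $|\nabla P_t^\ast f|\lesssim \|\sigma^{-1}\|\,t^{-1/2}\e^{-\alpha t}\|f\|_\infty$ (up to constants), integrated in the form that produces $A_{\alpha,\sigma,p}$ and $A_{\alpha,\sigma,\infty}$;
\item an $L^\infty$ bound $\mathfrak C_{p,M_p}$ on the density of $\mu_\ast$, which converts $L^p$ norms in $x$ into $L^1(\mu_\ast)$-controllable quantities through H\"older's inequality with exponent $q$.
\end{itemize}
The $t^{-1/2}\e^{-\alpha t}$ gradient bound integrates to $\sqrt{\pi/\alpha}$, which explains the stated formula for $A_{\alpha,\sigma,\infty}$.

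The smallness condition \eqref{gran-Lp-uniqueness-small} is exactly $\Lambda<1$ for the resulting kernel. By Theorem~\ref{main2}(i), with $\theta>0$ small enough that $\Lambda_\theta<1$ (possible by continuity in $\theta$ since the kernel decays exponentially), we get uniqueness and exponential convergence in TV. The main obstacle is the handling of $\int|\nabla P_{t-s}^\ast f(x)|\,|F(x,y)|\,\cL_{X_s}(\dif x)$ when $\cL_{X_s}$ has no a priori density bound. I would first try to write $\cL_{X_s}=\mu_\ast+(\cL_{X_s}-\mu_\ast)$: the $\mu_\ast$ part is controlled via the density bound $\mathfrak C_{p,M_p}$, and the difference via $M_\infty$-type terms, which is where the second summand $\mathfrak C_{p,M_p}A_{\alpha,\sigma,\infty}$ appears. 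Alternatively, Krylov/heat-kernel estimates for $X_s$, $s>0$, could control this term.

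For part (ii), I will use a direct synchronous coupling rather than the full machinery. Take two solutions $X_t,Y_t$ driven by the same Brownian motion, with $(X_0,Y_0)$ an optimal $W_1$ coupling. Set
\[
Z_t=X_t-Y_t,\qquad
\dot Z_t=-\alpha Z_t+\Big(\int F(X_t,y)\cL_{X_t}(\dif y)-\int F(Y_t,y)\cL_{X_t}(\dif y)\Big)+\int F(Y_t,y)\,(\cL_{X_t}-\cL_{Y_t})(\dif y).
\]
The one-sided Lipschitz bound gives $\langle Z,\text{middle term}\rangle\le \mathfrak L_x|Z|^2$. The last term is bounded by $\mathfrak L_y W_1(\cL_{X_t},\cL_{Y_t})\le \mathfrak L_y\mathbb E|Z_t|$ via Kantorovich duality. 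Hence
\[
\frac{\dif}{\dif t}|Z_t|\le -(\alpha-\mathfrak L_x)|Z_t|+\mathfrak L_y\mathbb E|Z_t|,
\]
to be made rigorous by differentiating $\sqrt{|Z|^2+\eps}$. Taking expectations and applying Gronwall yields
\[
W_1(\cL_{X_t},\cL_{Y_t})\le \e^{-(\alpha-\mathfrak L_x-\mathfrak L_y)t}W_1(\cL_{X_0},\cL_{Y_0}).
\]
Existence of $\mu_\ast$ then follows from the contraction applied to the law semigroup (Banach fixed point on $\sP_1$, or Theorem~\ref{coe}). Choosing $Y_0\sim\mu_\ast$ gives the stated estimate, and uniqueness follows immediately.

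For the two special cases:
\begin{itemize}
\item If $F(x,y)=\bar F(y)$, then $\mathfrak L_x=0$ and $\mathfrak L_y\le\|\nabla\bar F\|_\infty$.
\item If $F(x,y)=\bar F(x-y)$, then $\mathfrak L_x\le\|\nabla\bar F\|_\infty$ and $\mathfrak L_y\le\|\nabla\bar F\|_\infty$, so \eqref{sh} holds when $\|\nabla\bar F\|_\infty<\alpha/2$.
\end{itemize}
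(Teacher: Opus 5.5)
\textbf{Part (i) has a genuine gap, and it sits exactly at the step you call the main obstacle.}

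Write $G_\mu(x):=\int F(x,y)\,\mu(\dif y)$. Your splitting $\mathcal L_{X_s}=\mu_\ast+(\mathcal L_{X_s}-\mu_\ast)$ leaves the piece
\[
\int_{\mathbb R^d}\big|\nabla_x P^\ast_{t-s}f(x)\big|\,\big|G_{\mathcal L_{X_s}}(x)-G_{\mu_\ast}(x)\big|\,|\mathcal L_{X_s}-\mu_\ast|(\dif x).
\]
This can only be estimated through a pointwise bound on $G_{\mathcal L_{X_s}}-G_{\mu_\ast}$, that is, through $\sup_{x,y}|F(x,y)|$. For $p<\infty$ that quantity is not assumed finite, so the ``$M_\infty$-type terms'' you invoke do not exist. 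For $p=\infty$ no splitting is needed, since the pointwise bound $|G_\mu-G_{\mu_\ast}|\le M_\infty\|\mu-\mu_\ast\|_{\rm TV}$ already holds.

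Two further corrections to this step:
\begin{enumerate}
\item $\mathfrak C_{p,M_p}$ is an $L^q$ bound, not an $L^\infty$ bound.
\item In the Duhamel formula the expectation is taken along $X_s$. So the $L^q$ bound must be applied to the density of the \emph{perturbed} law, not to $\mu_\ast$.
\end{enumerate}

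The idea you are missing is that uniqueness only requires comparing two invariant measures. If $\nu$ is invariant, the stationary McKean--Vlasov equation is the frozen OU equation with drift $-\alpha x+G_\nu$. By Minkowski, $\|G_\nu\|_{L^p}\le M_p$. Lemma~\ref{frozen-invariant-density} then gives $\nu$ an $L^q$-density of norm at most $\mathfrak C_{p,M_p}$, and this density is constant in time along the stationary solution. Now apply Proposition~\ref{Lp-drift-TV-stability}, which uses H\"older in $x$ rather than a pointwise weight $\Psi_b$ as in Theorem~\ref{main2}. Combined with $\|G_\nu-G_{\mu_\ast}\|_{L^p}\le M_p\|\nu-\mu_\ast\|_{\rm TV}$ and letting $T\to\infty$, it gives
\[
\|\nu-\mu_\ast\|_{\rm TV}\le \mathfrak C_{p,M_p}M_p\Big(\int_0^\infty K_{\alpha,\sigma,M_p,p}(s)\,\dif s\Big)\|\nu-\mu_\ast\|_{\rm TV}.
\]
For convergence of a general flow, the paper needs a separate bound $\sup_{t\ge t_0}\|\rho_t\|_{L^q}<\infty$, which it imports from \cite{Huang2025}. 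Your Krylov/heat-kernel alternative points in that direction, but the bound must be uniform in time. The resulting Gronwall condition then involves this flow constant rather than $\mathfrak C_{p,M_p}$.

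\textbf{You have also misidentified the gradient kernel.} $P^\ast_t$ is the OU semigroup perturbed by the $L^p$ drift $G_{\mu_\ast}$, not the pure OU semigroup. A bound of the form $\|\sigma^{-1}\|t^{-1/2}\e^{-\alpha t}$ is therefore not available for it.

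The summand $A_{\alpha,\sigma,p}$ comes precisely from absorbing $G_{\mu_\ast}$. Duhamel's formula, together with the $L^p\to$ gradient kernel $k_{\alpha,\sigma,p}$ of the pure OU semigroup, gives $\|\nabla_x P^\ast_tf\|_\infty\le K_{\alpha,\sigma,M_p,p}(t)\|f\|_\infty$. By Theorem~\ref{ouk},
\[
\int_0^\infty K_{\alpha,\sigma,M_p,p}(s)\,\dif s=\frac{A_{\alpha,\sigma,\infty}}{1-M_pA_{\alpha,\sigma,p}},
\]
and the condition $\mathfrak C_{p,M_p}M_pA_{\alpha,\sigma,\infty}/(1-M_pA_{\alpha,\sigma,p})<1$ is exactly \eqref{gran-Lp-uniqueness-small}. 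So $A_{\alpha,\sigma,p}$ comes from the frozen semigroup, and $\mathfrak C_{p,M_p}A_{\alpha,\sigma,\infty}$ comes from the perturbation. This is not the attribution you gave.

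A quick consistency check shows the problem. With your pure-OU kernel, the case $p=\infty$ would produce the threshold $M_\infty A_{\alpha,\sigma,\infty}<1$ instead of the stated $2M_\infty A_{\alpha,\sigma,\infty}<1$.

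\textbf{Part (ii) is correct, by a different route from the paper.} The paper takes $\mu_\ast$ as given and anchors at it. It uses the Lipschitz contraction $\e^{-(\alpha-\mathfrak L_x)t}$ of the frozen semigroup (Theorem~\ref{ouk}(ii)) and the bound $|G_\mu-G_{\mu_\ast}|\le\mathfrak L_yW_1(\mu,\mu_\ast)$. It then applies Corollary~\ref{cor:W1-unique} with kernel $\mathfrak L_y\e^{-(\alpha-\mathfrak L_x)t}$.

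Your synchronous coupling of two nonlinear solutions has three advantages:
\begin{enumerate}
\item It is more elementary.
\item It yields existence through a Banach fixed point of the law map.
\item It gives the stated estimate directly, with constant $1$ and exact rate $\alpha-\mathfrak L_x-\mathfrak L_y$.
\end{enumerate}
By contrast, Lemma~\ref{gron}(i) on its own only gives rates $\theta<\alpha-\mathfrak L_x-\mathfrak L_y$, with constant $(1-\Lambda_\theta)^{-1}$. The sharp form requires applying classical Gronwall to the exponential kernel. The paper's route has the merit of being an instance of the general anchored principle.
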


\begin{remark}[Different roles of the noise]\label{23}
	An important feature of Theorem \ref{ou} is that the two uniqueness criteria
	respond differently to the noise intensity.  The  total variation
	framework exploits the
	regularization for rough kernels induced by the non-degenerate noise. In the bounded case \(p=\infty\), the
	admissible size is
	\[
	\|F\|_\infty
	<
	\frac12
	\|\sigma^{-1}\|^{-1}
	\sqrt{\frac{\alpha}{\pi}} .
	\]
	This
	dependence is natural: a larger diffusion matrix  gives stronger regularization of rough observables and thus allows larger  interactions.
	
	By contrast, the $W_1$ criterion reads
	\[
	\mathfrak{L}_x+\mathfrak{L}_y<\alpha,
	\]
	which is independent of the constant diffusion matrix \(\sigma\).  This is
	because the $W_1$ estimate relies on the Lipschitz contraction of the frozen
	dynamics, and   the additive noise cancels in synchronous coupling.  Hence the contraction rate is determined only by the one-sided
	dissipativity of the drift, namely the competition between the linear
	confinement \(-\alpha x\), the spatial one-sided Lipschitz constant \(\mathfrak{L}_x\),
	and the law-dependence strength \(\mathfrak{L}_y\). This contrast gives a concrete instance of the
	robustness--sharpness trade-off mentioned in Remark \ref{off}.
\end{remark}

\subsection{Dynamical Curie--Weiss model: phase transition and basin-dependent ergodicity}

We  finally consider a model in which the phase-transition
mechanism can be analyzed explicitly.  This example serves three purposes.
First, it confirms that the existence theorem developed before is
compatible with non-uniqueness of invariant   measures. There are parameter regimes in which several invariant measures coexist, and thus the global contraction argument can not be used.
Second, it shows that the anchored uniqueness criterion (more precisely, its
$W_1$-realization, since the law dependence acts through the mean) recovers the exact phase-transition threshold,  up to the critical equality.
Third, it demonstrates that even in the phase-transition regime,
the local anchored principle (Theorem~\ref{local-anchored}), combined with a closed mean equation and the stability estimates in Section 3, enables us to obtain basin-dependent quantitative
ergodicity.

More precisely, consider a dynamical version of the Curie--Weiss mean-field model in $\mR$:
\begin{align}\label{ex2}
\dif X_t
=
-(\gamma+c)X_t\dif t
+
\sqrt c\,\tanh\left(\sqrt c\,\mE X_t\right)\dif t
+
\dif W_t,
\qquad X_0=\xi,
\end{align}
where $c>0$, $\gamma>-c$, and the hyperbolic tangent function is defined by
$$
\tanh x := \sinh x / \cosh x,\quad\text{where}\quad \sinh x:=(\e^x-\e^{-x})/2,\quad \cosh x:=(\e^x+\e^{-x})/2.
$$
The parameter $c$ measures the strength of the mean-field interaction, whereas
$\gamma+c$ is the dissipation rate of the frozen Ornstein--Uhlenbeck dynamics. Their competition determines whether the system has
a unique equilibrium or undergoes a phase transition. We first establish the following existence and phase transition result.

\begin{theorem}[Invariant measures and phase transition]\label{tan1}
Let $c>0$ and $\gamma>-c$.  Then the McKean--Vlasov SDE \eqref{ex2} admits at least one invariant
 measure. Every invariant measure is of the form
\begin{align}\label{inv-tanh}
\mu_m(\dif x)
=
\sqrt{\frac{\gamma+c}{\pi}}
\exp\Big\{-(\gamma+c)(x-m)^2\Big\}\dif x,
\end{align}
where  $m\in\mR$ is the solution of the self-consistency equation
\begin{align}\label{self-con-tanh}
(\gamma+c)m=\sqrt c\,\tanh(\sqrt c\,m).
\end{align}
More precisely, the following assertions hold.

\begin{enumerate}[(i)]

\item If $\gamma\geq0$, then \eqref{ex2} has a unique invariant
measure  $\mu_{0}$ given by (\ref{inv-tanh}) with $m=0$.

\item If $-c<\gamma<0$, then \eqref{ex2} has exactly three invariant
measures
$
\mu_{-},
\mu_{0}$ and $
\mu_{+},
$
corresponding to three solutions  of \eqref{self-con-tanh}:
$$
m_-<0,
\qquad
m_0=0,
\qquad
m_+>0,
$$
and $m_+=-m_-$. The equilibria \(m_-\) and \(m_+\) are stable,
whereas \(m_0=0\) is unstable.
\end{enumerate}
\end{theorem}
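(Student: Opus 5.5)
The plan is to reduce everything to the mean. I would first settle existence by Theorem \ref{main1} (or Corollary \ref{cor1}). The drift
\[
b(x,\mu)=-(\gamma+c)x+\sqrt c\tanh(\sqrt c\,\mu(x))
\]
has a bounded law-dependent part $|\sqrt c\tanh|\leq\sqrt c$. Hence
\[
2xb(x,\mu)+\|\sigma\|^2\leq -2(\gamma+c)|x|^2+2\sqrt c|x|+1 .
\]
This is the subcritical case of $(\tilde{\mathbf H}_1)$ with $r_1=r_3=2$, $r_2=1$, $r_4=0$, using $\gamma+c>0$. Continuity $(\tilde{\mathbf H}_2)$ holds because weak convergence inside a second-moment ball implies convergence of the means, by the remark after $(\mathbf H_2)$, and $\tanh$ is continuous. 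Existence of an invariant measure therefore follows.

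Next I would characterize all invariant measures. Let $\mu$ be invariant with $\mu(|x|)<\infty$, and put $m:=\mu(x)$. The mean is then constant in time, so the nonlinear equation coincides with the frozen Ornstein--Uhlenbeck equation
\[
\dif X_t=[-(\gamma+c)X_t+\sqrt c\tanh(\sqrt c m)]\dif t+\dif W_t .
\]
Thus $\mu$ is invariant for this linear SDE, whose unique invariant measure is Gaussian with mean $\sqrt c\tanh(\sqrt c m)/(\gamma+c)$ and variance $1/(2(\gamma+c))$. Matching means gives the self-consistency equation \eqref{self-con-tanh}, and the density is \eqref{inv-tanh}. Conversely, each root $m$ makes $\mu_m$ invariant, since the frozen law stays $\mu_m$ and has mean $m$. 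I would justify the finite first moment by restricting to the admissible class, or more simply by noting that invariant measures of the frozen OU equation with a bounded forcing are Gaussian anyway.

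The main remaining step is counting the roots of $g(m):=\sqrt c\tanh(\sqrt c m)-(\gamma+c)m$. It is odd, so $m=0$ is always a root, and on $(0,\infty)$ the function $\tanh(\sqrt c m)$ is strictly concave. We have $g'(0)=c-(\gamma+c)=-\gamma$. If $\gamma\geq0$, strict concavity gives $\sqrt c\tanh(\sqrt c m)<c\,m\leq(\gamma+c)m$ for $m>0$, so $0$ is the only root, which is (i). If $-c<\gamma<0$, then $g'(0)>0$ and $g(m)\to-\infty$ as $m\to\infty$, so there is a positive root. Strict concavity of $g$ on $(0,\infty)$ with $g(0)=0$ makes the positive root unique, and oddness gives $m_-=-m_+$.

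For the stability claims I would read them off the mean ODE. Taking expectations gives the closed equation $\dot m_t=g(m_t)$. At $0$ we have $g'(0)=-\gamma>0$, so $0$ is unstable. At $m_+$, concavity together with $g(0)=g(m_+)=0$ forces $g'(m_+)<0$, so $m_+$ is stable, and by symmetry so is $m_-$. The only delicate point is the strict concavity argument for uniqueness of the positive root; everything else is routine.
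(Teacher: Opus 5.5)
Your proposal is correct and follows the paper's proof step for step: existence via Corollary~\ref{cor1}, identification of invariant measures through the frozen Ornstein--Uhlenbeck equation and matching of means, and counting the roots of the self-consistency equation using oddness and strict concavity of $\tanh$ on $(0,\infty)$. The only real deviation is the stability of $m_\pm$: your tangent-line inequality $0=g(0)<g(m_+)-g'(m_+)m_+=-g'(m_+)m_+$ is a shorter substitute for the paper's auxiliary function $\Phi(y)=\tanh y-y(1-\tanh^2 y)$, and both arguments give $\gamma+c\tanh^2(\sqrt c\,m_\pm)>0$.
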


\begin{remark}[The global anchored gap]
Set
\[
h(m):=\sqrt c\,\tanh(\sqrt c\,m).
\]
For $i\in\{-,0,+\}$, freezing the distribution of the solution $X_t$ as $\mu_i$,  the resulting frozen reference equation is
\begin{align}\label{ouf}
\dif\bar X_t^{\,i}
=
-(\gamma+c)(\bar X_t^{\,i}-m_i)\dif t+\dif W_t.
\end{align}
Let $\bar P_t^i$ be the corresponding semigroup. Then it holds that
\[
W_1(\mu \bar P_t^i,\nu \bar P_t^i)
\leq
\e^{- (\gamma+c)t}W_1(\mu,\nu),
\qquad t\geq0.
\]
On the other hand,
\[
h'(m)
=
c\,\operatorname{sech}^2(\sqrt c\,m),
\qquad
\sup_{m\in\mathbb R}|h'(m)|=c.
\]
Hence the global anchored condition in
Corollary~\ref{cor:W1-unique} becomes
\[
c<\gamma+c\,\,\Leftrightarrow\,\,\gamma>0.
\]
Thus the \(W_1\)-criterion recovers the uniqueness regime up to the
critical equality \(\gamma=0\). At criticality the strict contraction
gap vanishes, and the exponential relaxation is replaced by the
polynomial rate \(t^{-1/2}\) as shown in
Theorem~\ref{tan2} below.
\end{remark}

We next consider the ergodicity of the system (\ref{ex2}) in the uniqueness regime  $\gamma\geq0$. By Theorem \ref{tan1}, the
unique invariant measure is\[
\mu_0(\dif x)
=
\sqrt{\frac{\gamma+c}{\pi}}
\exp\big\{-(\gamma+c)x^2\big\}\dif x.
\]
Meanwhile, in view of (\ref{ouf}), the corresponding frozen  equation at $\mu_0$ is the centered
Ornstein--Uhlenbeck system
\[
\dif\bar X_t=-(\gamma+c)\bar X_t\dif t+\dif W_t.
\]
Let $V\geq1$ be a weight for which $\cL_{\bar X_t}$ converges to $\mu_0$ in ${\bf d}_V$ with an exponential rate
$\lambda_V\in(0,\gamma+c]$, namely,
\begin{align}\label{cw1}
{\bf d}_V(\cL_{\bar X_t},\mu_0)
\leq
C_0\,\e^{-\lambda_Vt},\qquad t\geq0.
\end{align}
For bounded $V$, and in particular for $V\equiv1$ so that ${\bf d}_V$ reduces to total variation, one may take
$\lambda_V=\gamma+c$.

\begin{theorem}[Ergodicity in the uniqueness regime and critical slowing down]
\label{tan2}
Assume $\gamma\geq 0$, $\mE |\xi|<\infty$ and $\mE V(\xi)<\infty$. The following assertions hold.

\begin{enumerate}[(i)]

\item If $\gamma>0$, then for every  $0<\lambda<\lambda_V\wedge\gamma$,
 there exists a constant $C_{1}>0$ such that
\begin{align}\label{cw-positive-gamma-exp}
{\bf d}_V(\cL_{X_t},\mu_0)
\leq
C_{1}\,\e^{-\lambda t},
\qquad t\geq0.
\end{align}
Moreover, if
$\lambda_V>\gamma$, then the endpoint
$\lambda=\gamma$ is also admissible.

\item If $\gamma=0$, then there exists a constant $C_{2}>0$ such that
\begin{align}\label{cw-critical-poly}
{\bf d}_V(\cL_{X_t},\mu_0)
\leq
 C_{2}(1+t)^{-1/2},
\qquad t\geq0.
\end{align}
More precisely:
\begin{enumerate}[(a)]

\item If $\mE\xi=0$, then there exists a constant
$ C_{3}>0$ such that
\begin{align}\label{cw-critical-zero-mean-exp}
{\bf d}_V(\cL_{X_t},\mu_0)
\leq
C_{3}\,\e^{-\lambda_Vt},
\qquad t\geq0.
\end{align}

\item If $\mE\xi\neq0$, then the polynomial rate in
\eqref{cw-critical-poly} is optimal. In particular, for
$V\equiv1$, we have
\begin{align}\label{critical-exact-limit}
\lim_{t\to\infty}
\sqrt t\,
\|\cL_{X_t}-\mu_0\|_{\rm TV}
=
\sqrt{\frac{6}{\pi c}}.
\end{align}
\end{enumerate}
\end{enumerate}
\end{theorem}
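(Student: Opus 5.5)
The proof does not go through the abstract Theorem \ref{main2}. Instead it uses the fact that \eqref{ex2} has a closed order-parameter equation. Write $k:=\gamma+c$, $h(m)=\sqrt c\tanh(\sqrt c\,m)$ and $m_t:=\mE X_t$. Taking expectations in \eqref{ex2} gives the scalar ODE
\[
\dot m_t=-k\,m_t+h(m_t),\qquad m_0=\mE\xi .
\]
Variation of constants then gives
\[
X_t=\e^{-kt}\xi+a_t+\int_0^t\e^{-k(t-s)}\dif W_s,\qquad a_t:=\int_0^t\e^{-k(t-s)}h(m_s)\dif s=m_t-\e^{-kt}m_0 .
\]
Hence $\cL_{X_t}$ is exactly the law $\cL_{\bar X_t}$ of the frozen centered Ornstein--Uhlenbeck process started at $\xi$, translated by the deterministic shift $a_t$. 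The triangle inequality gives
\[
{\bf d}_V(\cL_{X_t},\mu_0)\le {\bf d}_V(\cL_{\bar X_t},\mu_0)+{\bf d}_V\big(\cL_{\bar X_t}(\cdot-a_t),\cL_{\bar X_t}\big).
\]
The first term is at most $C_0\e^{-\lambda_Vt}$ by \eqref{cw1}. For the second term I will prove a shift lemma: for $t\ge1$ and $|a|\le A$,
\[
{\bf d}_V\big(\nu(\cdot-a),\nu\big)\le C_A|a|,\qquad \nu=\cL_{\bar X_t}.
\]
The reason is that $\nu$ is the convolution of $\cL_{\e^{-kt}\xi}$ with a Gaussian whose variance is bounded below uniformly in $t\ge1$, so translating by $a$ costs $O(|a|)$ in $V$-weighted variation. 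This needs $V(x+a)\le C_AV(x)$, which holds for the polynomial or bounded weights in play. Alternatively, the drift-perturbation stability estimates of Section~3 give the same bound. On $[0,1]$ everything is bounded, since $\mE V(\xi)<\infty$. So it remains to control $|a_t|\le|m_t|+\e^{-kt}|m_0|$.

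\emph{Case $\gamma>0$.} Since $mh(m)\le c\,m^2$, we get $\frac{\dif}{\dif t}m_t^2\le-2\gamma m_t^2$, so $|m_t|\le\e^{-\gamma t}|m_0|$ and $|a_t|\le2|m_0|\e^{-\gamma t}$ (recall $k\ge\gamma$). Combining with the first term gives a bound $C(\e^{-\lambda_Vt}+\e^{-\gamma t})$, which yields \eqref{cw-positive-gamma-exp} for every $\lambda<\lambda_V\wedge\gamma$. When $\lambda_V>\gamma$ the endpoint $\lambda=\gamma$ is attained directly.

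\emph{Case $\gamma=0$, $\mE\xi=0$.} Here $m\equiv0$ solves the ODE, so by uniqueness $a_t\equiv0$, $\cL_{X_t}=\cL_{\bar X_t}$, and \eqref{cw-critical-zero-mean-exp} is just \eqref{cw1}.

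\emph{Case $\gamma=0$, $m_0\neq0$.} Now $k=c$. Expanding, $-cm+h(m)=-\tfrac{c^2}{3}m^3+O(m^5)$, and $-cm+h(m)$ has the sign of $-m$, so $m_t\to0$ monotonically without changing sign. Comparing $\frac{\dif}{\dif t}m_t^{-2}=\tfrac{2c^2}{3}+O(m_t^2)$ shows $m_t^{-2}\sim\tfrac{2c^2}{3}t$, that is,
\[
\sqrt t\,|m_t|\to\sqrt{\tfrac{3}{2c^2}}.
\]
This gives $|a_t|\le C(1+t)^{-1/2}$ and hence \eqref{cw-critical-poly}, because the $\e^{-\lambda_Vt}$ term is of lower order.

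\emph{Sharp limit \eqref{critical-exact-limit}.} With $V\equiv1$ the distance is $\int|\mu-\nu|$. I write $\cL_{X_t}=\cL_{\e^{-ct}\xi}\ast N(m_t',\sigma_t^2)$, where the mean shift is $m_t'=a_t$ and $\sigma_t^2=(1-\e^{-2ct})/(2c)\to1/(2c)$. The contribution of $\e^{-ct}\xi$ is exponentially small in total variation, so it does not affect the $t^{-1/2}$ scale. For the Gaussian part,
\[
\int\big|\varphi_\sigma(x-a)-\varphi_\sigma(x)\big|\dif x=|a|\int|\varphi_\sigma'|+o(|a|)=|a|\,\frac{2}{\sigma\sqrt{2\pi}}+o(|a|).
\]
With $\sigma^2=1/(2c)$ this equals $2\sqrt{c/\pi}\,|a|\,(1+o(1))$. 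Multiplying by $\sqrt t|a_t|\to\sqrt{3/(2c^2)}$ gives $\sqrt{6/(\pi c)}$, which is \eqref{critical-exact-limit} and also proves optimality of the rate.

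The main obstacle is making the shift lemma and the error terms uniform and rigorous. This covers the weighted ${\bf d}_V$ bound with the non-Gaussian factor $\cL_{\e^{-kt}\xi}$, and the precise asymptotics of the cubic ODE, including the $O(m^5)$ correction.
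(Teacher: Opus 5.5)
Your proof is correct, but for the upper bounds it takes a different route from the paper.

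\textbf{The paper's route.} The paper never uses the explicit solution formula. It applies the weighted total-variation stability estimate (Theorem~\ref{thm2}), with the centered frozen Ornstein--Uhlenbeck process as the reference equation. This gives ${\bf d}_V(\mathcal L_{X_t},\nu_0\bar P^0_t)\le C\int_0^t\mathscr K_1(t-s)|m(s)|\,\mathrm{d}s$. The rates then come from splitting this convolution over $[t-2,t]$ and $[0,t-2]$ and inserting the decay of the mean ODE.

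\textbf{Your route.} You observe that $X_t=\bar X_t+a_t$ pathwise, with $a_t$ deterministic. The comparison term is therefore a pure translation, bounded by $C|a_t|$ through your shift lemma, and no time convolution appears.

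\textbf{What yours buys.}
\begin{itemize}
\item It is elementary and self-contained. It avoids the gradient estimates and the uniform bound on $\mathbb E\,\Theta_1(X_s)$, which the paper silently absorbs into its constant.
\item It gives a slightly stronger conclusion in (i). Since you obtain $C(\mathrm{e}^{-\lambda_V t}+\mathrm{e}^{-\gamma t})$, the exponent $\lambda_V\wedge\gamma$ itself is always admissible. This includes the case $\lambda_V=\gamma$, where the paper's convolution $\int_0^{t}\mathrm{e}^{-\lambda_V(t-s)}\mathrm{e}^{-\gamma s}\,\mathrm{d}s$ loses a factor $t$.
\item It puts the upper bounds and the sharp constant on one footing. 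The paper's Step~3 tacitly uses the same Gaussian structure when it asserts $\|\mathcal L_{X_t}-\nu_t\|_{\rm TV}\le C\mathrm{e}^{-ct}$.
\end{itemize}

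\textbf{What the paper's buys.} It is designed to show the general stability machinery at work. It does not rely on the law dependence acting as a deterministic translation of a process with an explicit Gaussian factor, so it transfers to frozen dynamics with no closed-form solution.

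\textbf{Cost of your approach.} You need translation-compatible weights, namely bounded or polynomial $V$, with $\mathbb E V(\xi)<\infty$ absorbing the non-Gaussian factor $\mathcal L_{\mathrm{e}^{-kt}\xi}$. The paper leaves the class of $V$ implicit anyway, so this is not a real loss.

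\textbf{Sharp limit.} Two points are harmless but should be written out. First, account for the variance mismatch $|\sigma_t^2-1/(2c)|=O(\mathrm{e}^{-2ct})$. Second, use the reverse triangle inequality to get the lower bound.
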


\begin{remark}
The formulation of Theorem \ref{tan2} in the weighted total variation distance
is one realization of a general principle underlying the proof.
More generally, let ${\bf d}_{\mathfrak F}$ be a dual distance in which the
frozen Ornstein--Uhlenbeck semigroup   converges to $\mu_{0}$   and for which the corresponding
stability estimate is available,
  then the nonlinear process converges to $\mu_{0}$ in the same
distance. This transfer principle highlights the usefulness of the global stability
estimates established in Section 3: they
provide a mechanism for lifting
known long-time properties of a tractable frozen reference process to the
nonlinear McKean--Vlasov dynamics.
\end{remark}

Finally, we consider the phase-transition regime $-c<\gamma<0$. Let
\[
m_-<m_0=0<m_+
\]
be the three solutions of \eqref{self-con-tanh}, and let
$\mu_i:=\mu_{m_i}$, $i\in\{-,0,+\}$ be the corresponding invariant
measures given by \eqref{inv-tanh}. The corresponding frozen equations at  $\mu_i$ are given by (\ref{ouf}), i.e.,
\[
\dif\bar X_t^{\,i}
=
-(\gamma+c)(\bar X_t^{\,i}-m_i)\dif t+\dif W_t.
\]
For $i\in\{-,0,+\}$, let $V\geq1$ be a weight for which the law $\cL_{\bar X_t^i}$ converges to $\mu_i$:
\begin{equation}\label{OU1}
{\bf d}_V(\cL_{\bar X_t^i},\mu_i)
\leq
C_i\,\e^{-\lambda_Vt},\qquad t\geq0.
\end{equation}
for some $\lambda_V\in(0,\gamma+c]$.
For a stable solution $m_i$ of \eqref{self-con-tanh}, i.e.  $i\in\{-,+\}$, define the exponential
stability exponent of the mean equation by
$$
\lambda_i^{\rm mean}
:=\gamma+c\tanh^2(\sqrt c\,m_i).
$$
By symmetry, $\lambda_+^{\rm mean}=\lambda_-^{\rm mean}:=\lambda^{\rm mean}$. We shall show that  $\lambda^{\rm mean}>0$
in Section 7. We have the following
basin-dependent ergodicity statement.

\begin{theorem}[Basin-dependent ergodicity]
\label{tan3}
Assume $-c<\gamma<0$, \(\mathbb E|\xi|<\infty\)  and $\mE V(\xi)<\infty$.  The basin of attraction is determined by the sign of the initial mean. More precisely, the following assertions hold:

\begin{enumerate}[(i)]

\item If $\mE\xi>0$, then for $0<\lambda<\lambda_V\wedge\lambda^{\rm mean}$, there exists a constant
$C_{1}>0$ such that
\begin{align}
\label{r1}
{\bf d}_V(\cL_{X_t},\mu_{+})
\leq
C_{1}\,\e^{-\lambda t},
\qquad t\geq0.
\end{align}

\item If $\mE\xi<0$, then for $0<\lambda<\lambda_V\wedge\lambda^{\rm mean}$,
    there exists a constant
$C_{1}>0$ such that
\begin{align}
\label{r2}
{\bf d}_V(\cL_{X_t},\mu_{-})
\leq
C_{1}\,\e^{-\lambda t},
\qquad t\geq0.
\end{align}

\item If $\mE\xi=0$, then there exists a constant $C_{2}>0$ such that
\begin{align}
\label{r0}
{\bf d}_V(\cL_{X_t},\mu_{0})
\leq
C_{2}\,\e^{-\lambda_Vt},
\qquad t\geq0.
\end{align}

\end{enumerate}
Moreover, if $\lambda_V>\lambda^{\rm mean}$, the endpoint $\lambda=\lambda^{\rm mean}$ is also admissible in \textup{(i)} and \textup{(ii)}.
\end{theorem}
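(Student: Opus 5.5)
We reduce the nonlinear dynamics to a closed scalar equation for the mean and then treat the law through a frozen, time-inhomogeneous Ornstein--Uhlenbeck process. Taking expectations in \eqref{ex2}, $m_t:=\mE X_t$ solves the autonomous ODE
\[
\dot m_t=-(\gamma+c)m_t+h(m_t),\qquad h(m)=\sqrt c\tanh(\sqrt c\, m),\qquad m_0=\mE\xi .
\]
Let $g(m):=-(\gamma+c)m+h(m)$. It is odd and has zeros exactly at $m_-,0,m_+$ (Theorem~\ref{tan1}). Since $m\mapsto\tanh(\sqrt c m)/m$ is decreasing on $(0,\infty)$, we get $g>0$ on $(0,m_+)$ and $g<0$ on $(m_+,\infty)$.

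\textbf{Mean dynamics.} Sign regions are invariant, so $m_t\equiv0$ if $\mE\xi=0$, and $m_t\to m_+$ monotonically if $\mE\xi>0$. For the rate we linearize at $m_+$: $g'(m_+)=-(\gamma+c)+c\,\mathrm{sech}^2(\sqrt c m_+)=-(\gamma+c\tanh^2(\sqrt c m_+))=-\lambda^{\rm mean}$. To make this global on $(0,\infty)$ rather than local, we write $g(m)-g(m_+)=\int_{m_+}^m g'$ and use concavity of $h$ on $(0,\infty)$.
\begin{itemize}
\item For $m\ge m_+$: $g'(s)\le g'(m_+)$, so $|m_t-m_+|\le |m_0-m_+|\e^{-\lambda^{\rm mean}t}$.
\item For $m\in(0,m_+)$: the trajectory first enters a neighbourhood $[m_+-\delta,m_+]$ in finite time $T$ (depending on $m_0$). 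There $g'\le -\lambda^{\rm mean}+\eta(\delta)$, which gives rate $\lambda^{\rm mean}-\eta$ for any $\eta>0$.
\end{itemize}
This loss of $\eta$ is why only $\lambda<\lambda^{\rm mean}$ is claimed in general. For the endpoint, the linearization error is $O(|m_t-m_+|^2)$ and hence integrable along an exponentially converging trajectory, so $|m_t-m_+|\le C\e^{-\lambda^{\rm mean}t}$ holds exactly. The case $\mE\xi<0$ follows by symmetry.

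\textbf{Law comparison.} Given $m_t$, the process $X_t$ solves the linear SDE $\dif X_t=-(\gamma+c)X_t\dif t+h(m_t)\dif t+\dif W_t$. We compare it, with the same initial datum, to the frozen equation \eqref{ouf} at $\mu_+$, whose drift term is $h(m_+)=(\gamma+c)m_+$; the drift difference is $|h(m_t)-h(m_+)|\le c|m_t-m_+|$, which depends only on time. We then apply the drift-perturbation stability estimate of Section~3 in ${\bf d}_V$ (Duhamel: $\nu_0P_t-\nu_0P^+_t=\int_0^t \nu_s(\,(h(m_s)-h(m_+))\partial_x P^+_{t-s}f)\dif s$), with the gradient bound $|\partial_xP_{t-s}^+f|\le \Theta(x)\sK_1(t-s)\|f\|_{\mathcal B_V}$ from \eqref{k1}. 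Here $\sK_1(t)\lesssim t^{-1/2}\mathbf 1_{t\le2}+\e^{-\lambda_V t}$, and the weighted moment $\sup_s\mE\,\Theta(X_s)<\infty$ follows from the Gaussian-type structure of $X_s$ with bounded mean. This yields
\[
{\bf d}_V(\cL_{X_t},\mu_+)\le {\bf d}_V(\nu_0P^+_t,\mu_+)+C\int_0^t \sK_1(t-s)\,|m_s-m_+|\dif s .
\]
The first term is at most $C\e^{-\lambda_Vt}$ by \eqref{OU1}. The convolution of $\e^{-\lambda_V\cdot}$ (plus an integrable singularity) with $\e^{-\lambda' s}$ is $O(\e^{-\min(\lambda_V,\lambda')t})$ when the two rates differ. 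If $\lambda_V>\lambda^{\rm mean}$ we can take $\lambda'=\lambda^{\rm mean}$ and obtain the endpoint. If the rates coincide, a factor $t$ appears, which is absorbed by lowering $\lambda$ slightly. As an alternative, once $m_t$ is near $m_+$ one can invoke Theorem~\ref{local-anchored} in $W_1$, but the direct Duhamel estimate is simpler. For (iii), $m_t\equiv0$ means $h(m_t)=h(0)=0$, so $X$ is exactly the frozen OU process at $\mu_0$, and \eqref{r0} is \eqref{OU1} itself.

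\textbf{Main obstacle.} The delicate step is the uniform rate of the scalar ODE from arbitrary initial means on one side, including data starting near the unstable point $0$. The entrance time $T$ into the linear-stability neighbourhood is finite but depends on $m_0$. We absorb it into the constant $C_1$, which the statement allows, and use the monotonicity of $g'$ on $(0,\infty)$ to control the phase after entrance.
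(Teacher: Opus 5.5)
Your proposal is correct and follows essentially the same route as the paper. The shared steps are:
\begin{itemize}
\item the closed mean ODE;
\item an exact rate $\lambda^{\rm mean}$ for $|m_t-m_\pm|$, obtained from a preliminary exponential bound plus an integrable linearization remainder;
\item a stability (Duhamel) comparison with the frozen Ornstein--Uhlenbeck process at $\mu_\pm$, through the kernel $\sK_1$ and the time-only drift error $c|m_s-m_\pm|$, followed by the triangle inequality with \eqref{OU1};
\item for (iii), the observation that the process is exactly the frozen centered Ornstein--Uhlenbeck process.
\end{itemize}
One small correction to your commentary: the strict restriction $\lambda<\lambda_V\wedge\lambda^{\rm mean}$ comes from the convolution step, where the two rates may coincide. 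It does not come from a loss in the mean rate, which you yourself show is exact.
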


\begin{remark}[Local anchored gap and linear stability]
At an invariant mean \(m_i\), the infinitesimal anchored gap is
\[
\gamma+c-h'(m_i)
=
\gamma+c\tanh^2(\sqrt c\,m_i).
\]
For \(i\in\{-,+\}\), this quantity is positive and equals
\(\lambda_i^{\rm mean}\), the linear stability exponent of the nonlinear
mean equation.
At the centered equilibrium,
\[
\gamma+c-h'(0)=\gamma<0.
\]
Hence the sign of the anchored gap distinguishes the three regimes:
\[
\begin{array}{c|c}
\text{parameter regime}
&
\text{dynamical behavior}
\\ \hline
\gamma>0
&
\text{global exponential stability},
\\
\gamma=0
&
\text{critical polynomial relaxation},
\\
-c<\gamma<0
&
\text{instability of the centered equilibrium}.
\end{array}
\]
Thus, the local anchored principle  gives a measure-level interpretation of
the linearized mean stability, while the scalar mean equation identifies
the global basins of attraction.
\end{remark}

\section{Global stability estimates for weak solutions of SDEs}

This section establishes quantitative, global-in-time stability estimates for weak solutions of SDEs under perturbations of both drift and diffusion coefficients. These estimates
are the main analytical tool of the paper. They compare a time-inhomogeneous
perturbed equation with a time-homogeneous reference equation, and express the
difference of their laws through derivative estimates of the reference
semigroup and coefficient errors evaluated along the perturbed process.
They will be used to provide the essential link between the tractable frozen reference dynamics and the nonlinear McKean--Vlasov system. The estimates are formulated in an abstract test-function framework, and then specialized to two concrete topologies--weighted total variation/H\"older-dual and Wasserstein-1--that are tailored to the different regularity and structural assumptions encountered in the subsequent applications.

Consider the following two SDEs in $\mR^d$:
\begin{align}\label{eq1}
	\dif X_t
	=
	b_t(X_t)\dif t+\sigma_t(X_t)\dif W_t,
	\qquad
	X_0=\xi,
\end{align}
and
\begin{align}\label{eq2}
	\dif \tilde X_t
	=
	\tilde b(\tilde X_t)\dif t+\tilde\sigma(\tilde X_t)\dif W_t,
	\qquad
	\tilde X_0=\tilde \xi,
\end{align}
where $\xi$ and $\tilde\xi$ are two random variables, the coefficients
$$
b: \mR_+\times\mR^d\to\mR^d, \qquad \sigma: \mR_+\times\mR^d\to\mR^d\otimes\mR^d,
$$
and
$$
\tilde b: \mR^d\to\mR^d,\qquad \tilde\sigma: \mR^d\to\mR^d\otimes\mR^d
$$
are measurable functions. We set
$$
a_t(x):=\tfrac{1}{2}\sigma_t(x)\sigma_t(x)^*,
\qquad
\tilde a(x):=\tfrac{1}{2}\tilde\sigma(x)\tilde\sigma(x)^*.
$$
We call  (\ref{eq2}) the {\it reference equation} and (\ref{eq1}) the {\it perturbed equation}.
We shall  write $X_t(\xi)$ and $\tilde X_t(\tilde\xi)$ to stress the dependence on the initial values of the solutions,  when $\xi=x$ and $\tilde \xi=\tilde x\in\mR^d$, we  also write  $X_t(x)$ and $\tilde X_t(\tilde x)$.

We impose the following standing assumptions, which guarantee weak well-posedness and moment bounds for both equations.

\begin{enumerate}[$({\mathbf A_0})$]
	\item
	\begin{enumerate}[(a)]
		\item
		There exist $\alpha\in(0,1)$, $\tilde p\in(d,\infty]$ and
		$q,p\in(2,\infty]$ with
		$
		d/p+2/q<1
		$
		such that
		$$
		\tilde\sigma\in\mathbf C^\alpha_{\rm loc}(\mR^d),
		\qquad
		\tilde b\in L^{\tilde p}_{\rm loc}(\mR^d),
		$$
		and
		$$
		\sigma\in L^\infty_{\rm loc}
		\big(\mR_+;\mathbf C^\alpha_{\rm loc}(\mR^d)\big),
		\qquad
		b\in L^q_{\rm loc}\big(\mR_+;L^p_{\rm loc}(\mR^d)\big).
		$$
		Moreover, there exist locally bounded functions
		$
		0<\tilde\lambda(x)\leq \tilde\Lambda(x)<\infty
		$
		such that, for all $t>0$ and $x,\eta\in\mR^d$,
		$$
		\tilde\lambda(x)|\eta|^2
		\leq
		|\tilde\sigma(x)\eta|^2,\ |\sigma_t(x)\eta|^2
		\leq
		\tilde\Lambda(x)|\eta|^2.
		$$
		
		\item
		There exist weights
		$
		\rho_0,\rho_1:\mR^d\to[1,\infty)
		$
		and an increasing continuous function $L_0:\mR_+\to[1,\infty)$ such that
		\begin{equation}\label{a2}
			\mE\left[
			\rho_0(\tilde X_t(x))+\rho_0(X_t(x))
			\right]
			\leq
			L_0(t)\rho_1(x),
			\qquad
			t\geq0,\ x\in\mR^d.
		\end{equation}
	\end{enumerate}
\end{enumerate}

Under $(\mathbf A_0)$, the standard theory for SDEs with singular drift and locally non-degenerate diffusion ensures weak existence and non-explosion for both equations, see e.g. \cite{SV,XXZ26}.

\subsection{Abstract semigroup formulation stability estimate}

We first formulate an abstract stability estimate in terms of a test-function class.
Let $\mathfrak F\subseteq\cB_{\rho_0}$ be a class of test functions on $\mR^d$ equipped with a norm
$\|\cdot\|_{\mathfrak F}$.  Recall  the associated dual distance ${\bf d}_{\mathfrak F}(\mu,\nu)$   defined in (\ref{dF-new}).
The Markov semigroup of the reference
equation is denoted by
\begin{equation}\label{tt}
	\tilde{P}_t f(x)
	:=
	\mE f(\tilde X_t(x)),\quad\forall f\in \mathfrak F.
\end{equation}
We assume that the reference semigroup satisfies the following derivative
estimates.

\begin{enumerate}[$({\mathbf A}_{\mathfrak F})$]
	\item
	\begin{enumerate}[(a)]
		\item There exist locally bounded functions
		$
		\Theta_{1,\mathfrak F},\Theta_{2,\mathfrak F}:\mR^d\to[1,\infty)
		$
		and locally integrable kernels
		$
		\sK_{1,\mathfrak F},\sK_{2,\mathfrak F}:\mR_+\to\mR_+
		$
		such that for every $f\in\mathfrak F$,
		\begin{equation}\label{abstract-grad}
			|\nabla_x \tilde{P}_t f(x)|
			\leq
			\Theta_{1,\mathfrak F}(x)
			\sK_{1,\mathfrak F}(t)
			\|f\|_{\mathfrak F},
			\qquad t>0,\ x\in\mR^d,
		\end{equation}
		and when $a_t\neq\tilde a$,
		\begin{equation}\label{abstract-hess}
			|\nabla_x^2 \tilde{P}_t f(x)|
			\leq
			\Theta_{2,\mathfrak F}(x)
			\sK_{2,\mathfrak F}(t)
			\|f\|_{\mathfrak F},
			\qquad t>0,\ x\in\mR^d.
		\end{equation}
		
		\item
		There exists a constant $C_{\mathfrak F}\geq1$ such that for any random variables
		$\xi,\tilde\xi$ whose distributions lie in the domain of
		${\bf d}_{\mathfrak F}$,
		\begin{equation}\label{abstract-initial}
			\big|
			\mE \tilde{P}_t f(\xi)
			-
			\mE \tilde{P}_t f(\tilde\xi)
			\big|
			\leq
			C_{\mathfrak F}
			\|f\|_{\mathfrak F}
			{\bf d}_{\mathfrak F}(\cL_\xi,\cL_{\tilde\xi}),
			\qquad t\geq0.
		\end{equation}
	\end{enumerate}
\end{enumerate}

The following theorem is the main result of this abstract framework. It quantifies how the difference between the distributions of the two processes is controlled by the initial distance, the drift perturbation, and--when present--the diffusion perturbation, all weighted by the derivative kernels of the reference semigroup.

\begin{theorem}[Stability estimate]\label{thm2-abstract}
	Assume $({\mathbf A_0})$ and $({\mathbf A}_{\mathfrak F})$.  Then, for every
	$f\in\mathfrak F$ and $t\geq0$,
	\begin{align}\label{abstract-stability}
		\begin{split}
			\Big|
			\mE f\big(X_t(\xi)\big)
			&-
			\mE f\big(\tilde X_t(\tilde\xi)\big)
			\Big|
			\leq
			\|f\|_{\mathfrak F}
			\bigg[
			C_{\mathfrak F}
			{\bf d}_{\mathfrak F}(\cL_\xi,\cL_{\tilde\xi})
			\\
			&
			+
			\mE\int_0^t
			\Theta_{1,\mathfrak F}\big(X_s(\xi)\big)
			\sK_{1,\mathfrak F}(t-s)
			\left|
			b_s\big(X_s(\xi)\big)
			-
			\tilde b\big(X_s(\xi)\big)
			\right|
			\dif s
			\\
			&
			+
			\mE\int_0^t
			\Theta_{2,\mathfrak F}\big(X_s(\xi)\big)
			\sK_{2,\mathfrak F}(t-s)
			\Big|
			a_s\big(X_s(\xi)\big)
			-
			\tilde a\big(X_s(\xi)\big)
			\Big|
			\dif s
			\bigg].
		\end{split}
	\end{align}
	If $a_t\equiv\tilde a$, then the last term vanishes.
\end{theorem}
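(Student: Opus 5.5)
The plan is the classical Duhamel (backward Kolmogorov) interpolation between the two dynamics. Fix $t>0$ and $f\in\mathfrak F$, and set $u(s,x):=\tilde P_{t-s}f(x)$ for $s\in[0,t]$. Formally $u$ solves the backward equation $\partial_s u+\tilde{\mathscr L}u=0$, where $\tilde{\mathscr L}=\tilde b\cdot\nabla+\operatorname{tr}(\tilde a\nabla^2)$, with $u(t,\cdot)=f$. We split
\[
\mE f(X_t(\xi))-\mE f(\tilde X_t(\tilde\xi))
=\big[\mE u(t,X_t(\xi))-\mE u(0,\xi)\big]+\big[\mE \tilde P_tf(\xi)-\mE\tilde P_tf(\tilde\xi)\big].
\]
The second bracket is bounded directly by $C_{\mathfrak F}\|f\|_{\mathfrak F}{\bf d}_{\mathfrak F}(\cL_\xi,\cL_{\tilde\xi})$ using \eqref{abstract-initial}.

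For the first bracket we apply Itô's formula to $s\mapsto u(s,X_s(\xi))$ along the perturbed equation \eqref{eq1}. The time derivative cancels $\tilde{\mathscr L}u$, leaving
\[
\mE u(t,X_t)-\mE u(0,\xi)=\mE\int_0^t\Big[(b_s-\tilde b)\cdot\nabla u+\operatorname{tr}\big((a_s-\tilde a)\nabla^2u\big)\Big](s,X_s)\,\dif s,
\]
once the martingale term has zero expectation. Inserting the gradient bound \eqref{abstract-grad} and the Hessian bound \eqref{abstract-hess}, with time argument $t-s$, gives exactly \eqref{abstract-stability}. When $a_t\equiv\tilde a$ the Hessian term is absent, so only \eqref{abstract-grad} is needed.

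The main obstacle is justifying Itô's formula, since the coefficients are singular: $\tilde b$ is only in $L^{\tilde p}_{\rm loc}$, $b$ is in $L^q_tL^p_x$, $u$ is only a Sobolev function in $\mW^{1,2}_{q,p;\rm loc}$, and $\nabla u$ blows up as $s\uparrow t$. We handle this in three steps.

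\emph{Step 1 (regularity).} Using local ellipticity and the Hölder continuity of $\tilde\sigma$, the local $L^p$ theory of Kolmogorov equations (as in \cite{XXZ26}) gives that $u\in\mW^{1,2}_{q,p;\rm loc}([0,t-\varepsilon]\times\mR^d)$ for each $\varepsilon>0$, and that it satisfies the equation a.e.

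\emph{Step 2 (generalized Itô formula).} We stop at $\tau_R:=\inf\{s:|X_s|\ge R\}$ and work on $[0,(t-\varepsilon)\wedge\tau_R]$. There Krylov's estimate for $X$ is available: it holds because $d/p+2/q<1$ and $\sigma$ is locally nondegenerate. With it, the generalized Itô formula for $\mW^{1,2}_{q,p}$ functions applies; if needed we justify it by mollifying $u$ and passing to the limit using Krylov's estimate. The stochastic integral is a true martingale up to $\tau_R$ because $\nabla u$ is bounded on compacts, by \eqref{abstract-grad}.

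\emph{Step 3 (removing the localizations).} First let $R\to\infty$. The left side converges by dominated convergence, using $|u|\le\|f\|_{\mathfrak F}\tilde P_{t-s}\rho_0$ together with the moment bound \eqref{a2} and non-explosion. The right side converges by monotone convergence applied to the absolute values, which is all we need, since we only want an upper bound. Then let $\varepsilon\to0$. The right side is monotone in $\varepsilon$, and $\mE u(t-\varepsilon,X_{t-\varepsilon})\to\mE f(X_t)$ by weak continuity of the laws and the uniform integrability given by $\rho_0$. For merely measurable $f$ this last limit is delicate; we would treat it by first proving the estimate for continuous $f$ and then extending by the density and duality that define ${\bf d}_{\mathfrak F}$, or by writing $u(t-\varepsilon,\cdot)=\tilde P_\varepsilon f$ and using the Markov property. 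Local integrability of $\sK_{i,\mathfrak F}$ guarantees that the resulting time integrals are finite whenever the right side of \eqref{abstract-stability} is finite.
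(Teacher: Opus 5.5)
Your proposal is correct and is essentially the paper's argument. The paper forms $v=u-\tilde u$, where $u(t,x)=\mathbb E f(X_{t,T}(x))$ solves the perturbed backward Kolmogorov equation, and quotes \cite[Theorem 4.3]{XXZ26} for the identity $v(0,x)=\mathbb E\int_0^T\hat f(s,X_s(x))\,\dif s$; this is exactly what your It\^o--Krylov formula for $\tilde u(s,X_s)$ produces, and your route needs no Sobolev regularity of the perturbed equation. The paper then applies \eqref{abstract-grad}, \eqref{abstract-hess} and \eqref{abstract-initial} exactly as you do, and delegates the localization in $R$ and $\varepsilon$ to that reference. One caveat on that extra care: \eqref{a2} only bounds $\rho_0$-moments at fixed times, so dominating $\rho_1(X_{s\wedge\tau_R})$ uniformly in $R$ needs a separate argument (e.g.\ prove the estimate first for bounded truncations of $f$).
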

\br
(i) Theorem \ref{thm2-abstract} shows  that the perturbation kernels are completely determined by
derivative estimates of the reference semigroup in the chosen
test-function class.  Hence different choices of $\mathfrak F$ may lead to
different  kernels, a flexibility that will be exploited in the uniqueness analysis below. In this sense, the topology of the test functions is not merely a technical device but a constitutive part of the quantitative stability theory.

(ii) The estimate (\ref{abstract-stability}) is asymmetric: all derivatives are taken on the reference
semigroup, while  the coefficient
differences are evaluated along the perturbed process
$X_t$. This form is particularly convenient when the reference equation represents a limiting  or frozen dynamics and the perturbed equations are approximations.
\er

\begin{remark}[On degenerate extensions]
	\label{deg}
	The local ellipticity imposed in this paper is used to invoke the regularity
	theory for the backward Kolmogorov equation and justify the generalized
	It\^o--Krylov formula. It is not intrinsic to the perturbation mechanism of
	Theorem~\ref{thm2-abstract}.
	
	Indeed, the proof extends to a possibly degenerate reference equation provided
	that for
	\[
	u(t,x)=\tilde{P}_{T-t}f(x),
	\]
	the following properties hold: \(u\) solves the backward Kolmogorov equation
	in a suitable generalized sense; an It\^o formula is valid for
	\(u(t,X_t)\) along the perturbed process; and the derivatives appearing in
	\[
	(\mathscr L_t-\tilde{\mathscr L})u
	=
	(b_t-\tilde b)\cdot\nabla  u
	+
	\operatorname{tr}\big((a_t-\tilde a)\cdot\nabla^2u\big)
	\]
	satisfy suitable weighted estimates with locally integrable time kernels. Note that, for genuinely degenerate equations, full Euclidean derivative estimates are
	not necessary. It is enough to control the directional derivatives in which
	the drift and diffusion perturbations act. In such settings, the
	derivative kernels    may exhibit
	stronger short-time singularities, whose integrability must be checked on a
	model-by-model basis.
	Once this is achieved, the proof gives the same convolution stability inequality, and  the anchored uniqueness and
	ergodicity arguments remain unchanged: the effective perturbation kernel is
	again obtained by combining the coefficient errors with the derivative
	estimates of the frozen reference evolution, and the decisive condition is
	the corresponding \(L^1\)-smallness condition.

	Consequently, the framework of the paper can be extended to hypoelliptic and kinetic
	McKean--Vlasov equations, degenerate Langevin systems, and other equations
	satisfying a H\"ormander-type regularization property. A systematic treatment of such degenerate models is
	left for future work.
\end{remark}

\begin{proof}[{\bf Proof of Theorem \ref{thm2-abstract}}]
	Let $\tilde \sL$ and $\sL_t$ be the generators of the reference process $\tilde X_t$ and the perturbed process $X_t$, respectively:
	\begin{equation*}\label{L}
		\tilde \sL\varphi(x):=\tr\Big(\tilde a(x)\nabla_x^2\varphi(x)\Big)+\tilde b(x)\cdot\nabla_x\varphi(x),
	\end{equation*}
	and
	\begin{align*}
		\sL_t\varphi(x) := \tr\Big(a_t(x)\nabla_x^2\varphi(x)\Big)+b_t(x)\cdot\nabla_x\varphi(x).
	\end{align*}
	Fix \(T>0\) and set
	\begin{equation}\label{uu}
		\tilde u(t,x):=\tilde{P}_{T-t} f(x)=\mE f(\tilde X_{T-t}(x))
	\end{equation}
	and
	$$
	u(t,x)=\mE f(X_{t,T}(x)),
	$$
	where $X_{t,T}(x)$ is the weak solution of SDE (\ref{eq1}) starting from $x$ at time $t$.
	By \cite[Theorem 4.3]{XXZ26}, $\tilde u$ and $u$ are the unique solutions in
	$W^{1,2}_{q,p;{\rm loc}}(\mR_+\times\mR^d)$ of
	the following backward Kolmogorov equations in $[0,T]\times\mR^d$:
	\begin{equation*}\left\{\begin{array}{l}
			\displaystyle
			\p_t u(t,x)+\sL_t u(t,x)=0,\quad t\in [0, T),\\
			u(T,x)=f(x),
		\end{array}\right.
	\end{equation*}
	and
	\begin{equation*}\left\{\begin{array}{l}\label{PDE0}
			\displaystyle
			\p_t\tilde u(t,x)+\tilde\sL \tilde u(t,x)=0,\quad t\in [0, T),\\
			\tilde u(T,x)=f(x),
		\end{array}\right.
	\end{equation*}
	respectively.
	Then
	$$
	v(t,x):=u(t,x)-\tilde u(t,x)
	$$
	satisfies
	\begin{equation*}\left\{\begin{array}{l}
			\displaystyle
			\p_t v(t,x)+\sL_t v(t,x)=-\big(b_t(x)-\tilde b(x)\big)\cdot\nabla_x\tilde u(t,x)\\
			\qquad\qquad\qquad\qquad\quad\quad-\tr\Big(\big(a_t(x)-\tilde a(x)\big)\cdot\nabla^2_x\tilde u(t,x)\Big),\quad t\in [0, T),\\
			v(T,x)=0.
		\end{array}\right.
	\end{equation*}
	Let
	\begin{align}\label{ff}
		\hat f(t,x):=\big(b_t(x)-\tilde b(x)\big)\cdot\nabla_x\tilde u(t,x)+\tr\Big(\big(a_t(x)-\tilde a(x)\big)\cdot\nabla^2_x\tilde u(t,x)\Big).
	\end{align}
If \(a_t\equiv\tilde a\), the second term in (\ref{ff}) can be dropped.
	Using \cite[Theorem 4.3]{XXZ26} again,  we have
	\begin{align*}
		v(t,x)=\mE\left(\int_t^T \hat f(s,X_{t,s}(x))\dif s\right).
	\end{align*}
	Taking $t=0$ gives
	\begin{align*}
		\mE f\big(X_T(\xi)\big)-\mE f\big(\tilde X_T(\xi)\big)=v(0,\xi)=\mE\left(\int_0^T \hat f(s,X_s(\xi))\dif s\right).
	\end{align*}
	Using \eqref{abstract-grad} and \eqref{abstract-hess} with $T-s$ in place of
	$t$, we obtain
	\begin{align*}
		|v(0,\xi)|
		&\leq
		\|f\|_{\mathfrak F}
		\mE\int_0^T
		\Theta_{1,\mathfrak F}(X_s(\xi))
		\sK_{1,\mathfrak F}(T-s)
		\left|
		b_s(X_s(\xi))-\tilde b(X_s(\xi))
		\right|
		\dif s
		\\
		&\quad
		+
		\|f\|_{\mathfrak F}
		\mE\int_0^T
		\Theta_{2,\mathfrak F}(X_s(\xi))
		\sK_{2,\mathfrak F}(T-s)
		\Big|
		a_s(X_s(\xi))-\tilde a(X_s(\xi))
		\Big|
		\dif s .
	\end{align*}
	The difference caused by the initial distributions is controlled by
	\eqref{abstract-initial}.  Combining these estimates and replacing $T$ by $t$
	proves \eqref{abstract-stability}.
\end{proof}

\subsection{Weighted total variation and H\"older-dual realization}

We now recover the weighted total variation version of Theorem \ref{thm2-abstract}. This specialization yields explicit kernels and is particularly well suited to SDEs with singular coefficients.

Instead of  $({\mathbf A_\mathfrak F})$, we  make the following more straightforward ergodic assumption on the reference equation.

\begin{enumerate}[$({\mathbf A_1})$]
	\item
	The reference semigroup (\ref{tt}) has a unique invariant measure
	$\tilde\mu$ with
	$
	\int_{\mR^d}\rho_0(x)\tilde\mu(\dif x)<\infty,
	$
	and there exists a decreasing function
	$\ell_0:[1,\infty)\to\mR_+$ with $\ell_0(t)\to0$ as $t\to\infty$ such that,
	for all $f\in\mathcal B_{\rho_0}$,
	\begin{align}\label{a3}
		|\tilde{P}_t f(x)-\tilde\mu(f)|
		\leq
		\ell_0(t)\rho_1(x)\|f\|_{\mathcal B_{\rho_0}},
		\qquad t\geq1,\ x\in\mR^d.
	\end{align}
\end{enumerate}

For simplicity, define the kernels
\begin{align}\label{eqH1}
	\sK_1(t)
	:=
	\mathbf 1_{(0,2]}(t)t^{-1/2}
	+
	\mathbf 1_{(2,\infty)}(t)\ell_0(t-1),
\end{align}
and, for $\a\in(0,1)$,
\begin{align}\label{eqH2}
	\sK_{2,\alpha}(t)
	:=
	\mathbf 1_{(0,2]}(t)t^{-1+\a/2}
	+
	\mathbf 1_{(2,\infty)}(t)\ell_0(t-1),
\end{align}
where $\ell_0(t)$ is given in (\ref{a3}). These kernels encode the short-time parabolic smoothing and the long-time ergodic decay of the reference semigroup.

The next lemma, taken from \cite{XXZ26}, provides the required derivative estimates of the reference
semigroup in the weighted total variation and weighted H\"older classes.

\begin{lemma}\label{esu}
	Assume $(\mathbf A_0)$ and $(\mathbf A_1)$ hold.  Then, for every
	$f\in\mathcal B_{\rho_0}$,
	there exists a locally bounded function
	$\Theta_1:\mR^d\to[1,\infty)$ such that
	\begin{align}\label{gr1}
		|\nabla_x \tilde{P}_t f(x)|
		\leq
		\Theta_1(x)\sK_1(t)\|f\|_{\mathcal B_{\rho_0}},
		\qquad t>0,\ x\in\mR^d.
	\end{align}
	If, in addition, $\tilde b\in\mathbf C^\alpha_{\rm loc}(\mR^d)$ with
	$\alpha\in(0,1)$, then there exists a locally bounded function
	$\Theta_{2,\alpha}:\mR^d\to[1,\infty)$ such that for all
	$f\in\mathbf C^\alpha_{\rho_0}$,
	\begin{align}\label{gr2}
		|\nabla_x^2 \tilde{P}_t f(x)|
		\leq
		\Theta_{2,\alpha}(x)
		\sK_{2,\alpha}(t)
		\|f\|_{\mathbf C^\alpha_{\rho_0}},
		\qquad t>0,\ x\in\mR^d.
	\end{align}
\end{lemma}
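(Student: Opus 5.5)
The plan is to split the time axis at $t=2$, as the kernels $\sK_1$ and $\sK_{2,\alpha}$ already suggest. For short times I would use local parabolic regularity for the backward Kolmogorov equation. For long times I would combine that short-time estimate with the ergodic decay \eqref{a3}, via the semigroup property. Throughout, fix $x_0\in\mR^d$ and work on the cylinder $Q_{x_0}:=[0,2]\times B_2(x_0)$. On $Q_{x_0}$, assumption $(\mathbf A_0)$ supplies uniform ellipticity with constants $\tilde\lambda,\tilde\Lambda$ taken over $\overline{B_2(x_0)}$, together with a finite $\mathbf C^\alpha(B_2(x_0))$-norm of $\tilde\sigma$ and a finite $L^{\tilde p}(B_2(x_0))$-norm of $\tilde b$. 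All constants below depend only on these local quantities. Since they are locally bounded in $x_0$, the resulting weights $\Theta_1,\Theta_{2,\alpha}$ are locally bounded.

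\emph{Short times, $0<t\leq 2$.} Set $w(s,y):=\tilde P_s f(y)$. By \cite[Theorem 4.3]{XXZ26}, $w$ is the $\mW^{1,2}_{q,p;\rm loc}$ solution of $\p_s w=\tilde\sL w$ with $w(0)=f$. I would rescale parabolically to the cylinder $[t/2,t]\times B_{\sqrt t}(x_0)$ and apply the interior $\mW^{1,2}_{q,p}$ (Krylov-type) estimate for $\p_s-\tilde\sL$. Because $\tilde p>d$, the drift becomes lower order after scaling, and Sobolev embedding then gives
\[
|\nabla w(t,x_0)|\leq C\,t^{-1/2}\sup_{[0,t]\times B_1(x_0)}|w|.
\]
To bound the supremum, I would use $|w(s,y)|\leq \|f\|_{\cB_{\rho_0}}\mE\rho_0(\tilde X_s(y))\leq L_0(2)\rho_1(y)\|f\|_{\cB_{\rho_0}}$, which follows from \eqref{a2}. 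Hence one may take
\[
\Theta_1(x_0)\geq C\,L_0(2)\sup_{B_1(x_0)}\rho_1.
\]
For \eqref{gr2}, $\tilde b$ is now Hölder, so I would replace the $L^p$ estimate by interior Schauder estimates with Hölder initial data. Splitting off the initial layer, these give $|\nabla^2 w(t,x_0)|\leq C t^{-1+\alpha/2}\sup_y\|w(s,\cdot)\|_{\mathbf C^\alpha(B_1(y))}$ over the relevant local set. The Hölder seminorm of $w$ is controlled by $\|f\|_{\mathbf C^\alpha(B_2(x_0))}$ plus the same moment bound, and hence by $\sup_{B_2(x_0)}\rho_0\cdot\|f\|_{\mathbf C^\alpha_{\rho_0}}$.

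\emph{Long times, $t>2$.} Constants are annihilated by $\nabla$, so
\[
\tilde P_t f=\tilde P_1 g+\tilde\mu(f),\qquad g:=\tilde P_{t-1}f-\tilde\mu(f).
\]
I would apply the interior estimate at unit time to $w'(s,\cdot):=\tilde P_s g=\tilde P_{s+t-1}f-\tilde\mu(f)$ for $s\in[0,1]$. This bounds $|\nabla\tilde P_t f(x_0)|$ and $|\nabla^2\tilde P_tf(x_0)|$ by $C\sup_{[0,1]\times B_1(x_0)}|w'|$. For the Hessian I would use the bounded-data Schauder estimate at time $1$, so no Hölder norm of $g$ is needed. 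Since $s+t-1\geq1$ and $\ell_0$ is decreasing, \eqref{a3} gives
\[
|w'(s,y)|\leq\ell_0(t-1)\rho_1(y)\|f\|_{\cB_{\rho_0}}\leq \ell_0(t-1)\rho_1(y)\|f\|_{\mathbf C^\alpha_{\rho_0}},
\]
which yields the $\mathbf 1_{(2,\infty)}(t)\ell_0(t-1)$ part of both kernels. It would be convenient to enlarge $\Theta_1,\Theta_{2,\alpha}$ by $C\sup_{B_1(x_0)}\rho_1$ so that one weight covers both regimes.

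The main obstacle is the short-time interior estimate with the correct $t$-power under the weak coefficient assumptions. Scaling to a cylinder of size $\sqrt t$ must leave the $L^{\tilde p}$ drift norm bounded; since $\tilde p>d$ this norm scales like $t^{(1-d/\tilde p)/2}\to0$, so it stays small. The Hölder modulus of $\tilde\sigma$ improves under the same scaling. The Hessian bound with only Hölder initial data requires some care in the initial layer. I would handle it by freezing coefficients at $x_0$, using the explicit heat-kernel bound $\|\nabla^2 e^{s\Delta_{\tilde a(x_0)}}h\|_\infty\leq Cs^{-1+\alpha/2}[h]_\alpha$, and treating the variable-coefficient remainder by a Duhamel/perturbation argument. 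If that becomes cumbersome, I would instead cite the local estimates in \cite{XXZ26}, from which this lemma is taken.
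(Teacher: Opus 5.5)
Your outline is correct, and it has the same structure as the paper's argument: split at $t=2$, parabolic smoothing for $t\le 2$, and an ergodic tail for $t>2$. The paper's proof, however, is purely a citation. It refers to \cite[Theorem 1.2]{XXZ26} for short times and \cite[Theorem 1.5]{XXZ26} for long times, and records explicit weights.

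Your sketch instead reconstructs the mechanism. For short times you use interior $L^p$/Schauder estimates after rescaling to a cylinder of size $\sqrt t$; there $\tilde p>d$ makes the rescaled drift norm $t^{(1-d/\tilde p)/2}\|\tilde b\|_{L^{\tilde p}}$ harmless. For $t>2$ you use the reduction $\tilde P_tf=\tilde P_1\big(\tilde P_{t-1}f-\tilde\mu(f)\big)+\tilde\mu(f)$ together with \eqref{a3}. This is exactly what produces the tail $\ell_0(t-1)$, and it shows why the Hessian tail needs only $\|f\|_{\mathcal B_{\rho_0}}$.

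What the citation buys is explicit weights: $\Theta_1$ and $\Theta_{2,\alpha}$ depend explicitly on $\tilde\lambda$, $\tilde\Lambda$, $[\tilde a]_{\mathbf C^\alpha}$, $\|\tilde b\|_{L^{\tilde p}}$ and $\rho_1$. Your local constants leave this dependence implicit, yet it is what one must check in compatibility conditions such as \eqref{HF-weight-b}.

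The one step in your sketch that needs real care is the short-time Hessian bound for H\"older data. Applying interior Schauder estimates to $w-w(t,x_0)$ also requires the time oscillation of $w$ on $[t/2,t]$ to be $O(t^{\alpha/2})$, not just the spatial H\"older seminorms you list. This is most cleanly handled by localizing $f=\chi f+(1-\chi)f$, so that the second piece has zero initial data near $x_0$. Falling back on \cite{XXZ26} at this point is precisely what the paper does.

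Finally, the extra factor $\sup_{B_2(x_0)}\rho_0$ in your Hessian weight is harmless. Indeed, \eqref{a2} at $t=0$ gives $\rho_0\le\frac12L_0(0)\rho_1$, which matches the paper's weights involving only $\rho_1$.
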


\begin{proof}
	The short-time estimates in (\ref{gr1}) and (\ref{gr2}) with $t\leq 2$ follow from \cite[Theorem 1.2]{XXZ26}, while the long-time estimates follow  from \cite[Theorem 1.5]{XXZ26}.
	Explicit formulas for the weight functions can be given as
	\begin{equation*}\label{Theta1}
		\Theta_1(x)
		=
		C_0
		\tilde\Lambda_1^\eps(x)
		\left(
		\frac{\tilde\Lambda_1(x)}{\tilde\lambda(x)}
		\right)^{3\eps+d/\tilde p}
		\tilde\Gamma_1(x)
		\|\rho_1\|_{L^\infty(B_1(x))},
	\end{equation*}
	with $C_0>0$, $\eps>0$, $\tilde\Lambda_1(x):=\tilde\Lambda(x)+1,$ and
	\begin{align*}
		\tilde\Gamma_1(x)
		&:=
		\frac{\tilde\Lambda_1(x)
			[\tilde a]_{\mathbf C^\alpha(B_1(x))}^{1/\alpha}}
		{\tilde\lambda^{1+1/\alpha}(x)}
		+
		\frac{\|\tilde b\|_{L^{\tilde p}(B_1(x))}^{1/\theta_{\tilde b}}}
		{\tilde\lambda(x)^{1/\theta_{\tilde b}}}
		+
		\frac{\tilde\Lambda_1(x)}{\tilde\lambda(x)},
		\qquad
		\theta_{\tilde b}:=1-\frac{d}{\tilde p},
	\end{align*}
	and
	\begin{equation*}\label{Theta2}
		\Theta_{2,\a}(x)
		=
		C_0\tilde\Gamma_{2,\alpha}(x)
		\|\rho_1\|_{L^\infty(B_1(x))},
	\end{equation*}
	with
	\begin{align*}
		\tilde{ \Gamma}_{2,\a}(x)&:=\left[\bigg(\frac{ \tilde\Lambda_{1}(x) [\tilde a]^{1/\alpha}_{\mathbf C^\alpha(B_1(x))}}{\tilde\lambda^{1+1/\alpha}(x)}+\frac{\|\tilde b\|_{\mathbf C^\alpha(B_{1}(x))}}{\tilde\lambda(x)}\bigg) +\frac{\tilde\Lambda_{1}(x)}{\tilde\lambda(x)}\right]^2.
	\end{align*}
	The local boundedness of $\Theta_1(x)$ and $\Theta_{2,\a}(x)$ follows directly from these expressions.
\end{proof}

The weighted total variation version stability estimate is now an immediate consequence of the
abstract Theorem \ref{thm2-abstract}.

\begin{theorem}[Weighted total variation/H\"older-dual  stability]\label{thm2}
	Assume $(\mathbf A_0)$ and $(\mathbf A_1)$, and suppose
	\begin{align}\label{xi}
		\mE\left[\rho_1(\xi)+\rho_1(\tilde\xi)\right]<\infty.
	\end{align}
	Then the following assertions hold.
	
	\begin{enumerate}[(i)]
		\item (Drift-only perturbation) If $a_t\equiv\tilde a$, then for every
		$f\in\mathcal B_{\rho_0}$ and $t>0$,
		\begin{equation*}\label{es1}
			\begin{split}
				&
				\left|
				\mE f\big(X_t(\xi)\big)
				-
				\mE f\big(\tilde X_t(\tilde\xi)\big)
				\right|
				\leq
				\|f\|_{\mathcal B_{\rho_0}}
				\bigg[
				C_0{\bf d}_{\rho_1}(\cL_\xi,\cL_{\tilde\xi})
				\\
				&\qquad\qquad\quad
				+
				\mE\int_0^t
				\Theta_1\big(X_s(\xi)\big)\sK_1(t-s)
				\left|
				b_s\big(X_s(\xi)\big)
				-
				\tilde b\big(X_s(\xi)\big)
				\right|
				\dif s
				\bigg],
			\end{split}
		\end{equation*}
		where $\sK_1(t)$ is defined by (\ref{eqH1}), and $C_0>0$ is independent of $t$, $f$, $\xi$ and $\tilde\xi$.

		\item (Full drift-diffusion perturbation) If, in addition, $\tilde b\in\mathbf C^\alpha_{\rm loc}(\mR^d)$, then
		for every $f\in\mathbf C^\alpha_{\rho_0}$ and $t>0$,
		\begin{equation*}\label{es2}
			\begin{split}
				&
				\left|
				\mE f\big(X_t(\xi)\big)
				-
				\mE f\big(\tilde X_t(\tilde\xi)\big)
				\right|
				\leq
				\|f\|_{\mathcal B_{\rho_0}}\bigg[C_0
				{\bf d}_{\rho_1}(\cL_\xi,\cL_{\tilde\xi})
				\\
				&\qquad\qquad\quad
				+
				\mE\int_0^t
				\Theta_1\big(X_s(\xi)\big)\sK_1(t-s)
				\left|
				b_s\big(X_s(\xi)\big)
				-
				\tilde b\big(X_s(\xi)\big)
				\right|
				\dif s\bigg]
				\\
				&\qquad\qquad\quad
				+
				\|f\|_{\mathbf C^\alpha_{\rho_0}}
				\mE\int_0^t
				\Theta_{2,\alpha}\big(X_s(\xi)\big)\sK_{2,\alpha}(t-s)
				\left|
				a_s\big(X_s(\xi)\big)
				-
				\tilde a\big(X_s(\xi)\big)
				\right|
				\dif s,
			\end{split}
		\end{equation*}
		where $\sK_{2,\a}(t)$ is defined by (\ref{eqH2}).
	\end{enumerate}
\end{theorem}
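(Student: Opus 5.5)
The plan is to specialize the abstract stability estimate, Theorem \ref{thm2-abstract}, to the test-function classes $\mathfrak F=\mathcal B_{\rho_0}$ (drift-only case) and $\mathfrak F=\mathbf C^\alpha_{\rho_0}$ (full case). This requires verifying the two parts of $(\mathbf A_{\mathfrak F})$.

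For part (a), the derivative estimates \eqref{abstract-grad}--\eqref{abstract-hess} are supplied directly by Lemma \ref{esu}. We take $\Theta_{1,\mathfrak F}=\Theta_1$ and $\sK_{1,\mathfrak F}=\sK_1$, and in the full case $\Theta_{2,\mathfrak F}=\Theta_{2,\alpha}$ and $\sK_{2,\mathfrak F}=\sK_{2,\alpha}$. Here the hypothesis $\tilde b\in\mathbf C^\alpha_{\rm loc}$ in (ii) is exactly what Lemma \ref{esu} needs for the Hessian bound. The inclusion $\|f\|_{\mathcal B_{\rho_0}}\leq\|f\|_{\mathbf C^\alpha_{\rho_0}}$ shows that the gradient term in (ii) can be written with the $\mathcal B_{\rho_0}$ norm, as stated.

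For part (b), I would not try to get the initial-value estimate \eqref{abstract-initial} in the form $\|f\|\,{\bf d}_{\mathfrak F}$. Instead I would use a $\rho_1$-weighted version. By \eqref{a2}, $|\tilde P_tf(x)|\leq \|f\|_{\mathcal B_{\rho_0}}\mathbb E\rho_0(\tilde X_t(x))\leq L_0(t)\rho_1(x)\|f\|_{\mathcal B_{\rho_0}}$. For $t\geq1$, \eqref{a3} gives $|\tilde P_tf(x)-\tilde\mu(f)|\leq \ell_0(t)\rho_1(x)\|f\|_{\mathcal B_{\rho_0}}$, and since $\ell_0$ is decreasing, $\ell_0(t)\leq\ell_0(1)$. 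Combining the two,
\[
\sup_{t\geq0}\|\tilde P_tf-c_t\|_{\mathcal B_{\rho_1}}\leq \big(L_0(1)+\ell_0(1)\big)\|f\|_{\mathcal B_{\rho_0}},
\]
where $c_t=0$ for $t\leq1$ and $c_t=\tilde\mu(f)$ for $t>1$. Constants integrate to zero against $\mathcal L_\xi-\mathcal L_{\tilde\xi}$, so
\[
|\mathbb E\tilde P_tf(\xi)-\mathbb E\tilde P_tf(\tilde\xi)|\leq C_0\|f\|_{\mathcal B_{\rho_0}}{\bf d}_{\rho_1}(\mathcal L_\xi,\mathcal L_{\tilde\xi}),
\]
which is finite by \eqref{xi}. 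This is the main point requiring care: the weights in the initial term (${\bf d}_{\rho_1}$) and in the test class ($\rho_0$) differ. Accordingly, I would re-run the last line of the proof of Theorem \ref{thm2-abstract} rather than invoke it verbatim, since that proof splits the difference as
\[
\mathbb E f(X_t(\xi))-\mathbb E f(\tilde X_t(\tilde\xi))=\big[\mathbb E f(X_t(\xi))-\mathbb E\tilde P_tf(\xi)\big]+\big[\mathbb E\tilde P_tf(\xi)-\mathbb E\tilde P_tf(\tilde\xi)\big].
\]

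The first bracket is $v(0,\xi)$ and is controlled by the Kolmogorov-equation argument. To justify the representation $v(0,x)$ via \cite[Theorem 4.3]{XXZ26} for unbounded $f\in\mathcal B_{\rho_0}$, I would note that integrability comes from \eqref{a2}. If needed, one can truncate $f$ by $f_n=(f\wedge n)\vee(-n)$ and pass to the limit by dominated convergence, using $\mathbb E\rho_0(X_t)<\infty$ and the uniform-in-$n$ bounds of Lemma \ref{esu} (in which $\|f_n\|\leq\|f\|$). The second bracket is bounded by the estimate above. Adding the two bounds gives (i) and (ii).
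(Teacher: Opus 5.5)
Your proposal is correct and follows the paper's route: it specializes Theorem~\ref{thm2-abstract} with Lemma~\ref{esu} supplying $\Theta_1,\sK_1$ (and $\Theta_{2,\alpha},\sK_{2,\alpha}$), and it controls the initial-distribution term via \eqref{a2} and \eqref{a3}. Your explicit bound $\sup_{t\geq0}\|\tilde P_tf-c_t\|_{\mathcal B_{\rho_1}}\leq (L_0(1)+\ell_0(1))\|f\|_{\mathcal B_{\rho_0}}$ spells out what the paper compresses into ``${\bf d}_{\mathfrak F}={\bf d}_{\rho_1}$'', and it correctly handles the mismatch between the weights $\rho_0$ and $\rho_1$ that the paper passes over.
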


\begin{proof}
	The first estimate follows from Theorem \ref{thm2-abstract} with
	$$
	\mathfrak F=\mathcal B_{\rho_0},
	\qquad
	{\bf d}_{\mathfrak F}={\bf d}_{\rho_1},
	\qquad
	\sK_{1,\mathfrak F}=\sK_1,
	\qquad
	\Theta_{1,\mathfrak F}=\Theta_1.
	$$
	The second estimate is obtained  by taking
	$$
	\mathfrak F=\mathbf C^\alpha_{\rho_0},
	\qquad
	\sK_{2,\mathfrak F}=\sK_{2,\alpha},
	\qquad
	\Theta_{2,\mathfrak F}=\Theta_{2,\alpha}.
	$$
	The initial-distribution term is controlled by \eqref{a2} and \eqref{a3}.
\end{proof}

\begin{remark}
	Estimate (ii) contains
	an additional diffusion perturbation term, which requires a Hessian estimate
	of the reference semigroup.  The singularity
	$t^{-1+\alpha/2}$ in $\sK_{2,\alpha}$  is integrable near zero precisely because
	$\alpha>0$. This is the reason why the full drift-diffusion
	perturbation estimate is stated for H\"older test functions. If
	$f$ is only locally bounded, then the short-time Hessian singularity would be
	of order $t^{-1}$, which is not integrable at $0$. This  will be important when we discuss models where the diffusion coefficient depends on the distribution.
\end{remark}

It is often useful to combine Theorem \ref{thm2}
with a local Krylov estimate for $X_t$, particularly when the coefficients converge only in local Lebesgue norms.
For $R>0$, set
$$
\tau_R:=\inf\{t\geq0:\ |X_t|\geq R\}.
$$
We recall the following local Krylov type estimate, see e.g. \cite{Zhang11}.

\bl
Assume $(\mathbf{A_{0}})$ holds. Let $T,R>0$ and  $X_t$ be the unique weak solution to  SDE (\ref{eq1}). Then there exists a constant $C_{T,R}$ such that for every $f\in \mL^q_p((0,T)\times B_R)$ with $d/p+2/q<2$, we have
\begin{align*}
	\mE\left(\int_0^{T\wedge\tau_R}|f(s,X_s)|\dif s\right)\leq C_{T,R}\|f\|_{\mL^q_p((0,T)\times B_R)}.
\end{align*}
\el

For a locally bounded function $\Theta$, define
$$
\Theta_R:=\|\Theta\|_{L^\infty(B_R)}.
$$
The following localized stability result separates the local coefficient convergence from the tail contribution.

\begin{corollary}[Local $L^q_p$ stability]\label{Lp}
	Assume $(\mathbf{A_0})$, $(\mathbf{A_1})$ and  (\ref{xi}). Fix $T, R>0$. The following assertions hold.
	
	\begin{enumerate}[(i)]
		\item (Drift-only perturbation)
		If $a_t\equiv \tilde a$, then for every
		$f\in\mathcal B_{\rho_0}$,
		\begin{equation*}\label{qp1}
			\begin{split}
				&\left|
				\mathbb E f\big(X_T(\xi)\big)-\mathbb E f\big(\tilde X_T(\tilde\xi)\big)
				\right|
				\leq
				C_0\|f\|_{\mathcal B_{\rho_0}}
				{\bf d}_{\rho_1}(\cL_{\xi},\cL_{\tilde\xi})
				\\
				&\qquad\qquad\quad\,\,+
				C_{T,R}\,
				\Theta_{1,R}
				\|b-\tilde b\|_{\mL^q_p((0,T)\times B_R)}
				\|f\|_{\mathcal B_{\rho_0}}+
				\mathcal R_R(T,f)
			\end{split}
		\end{equation*}
		where
		\begin{equation*}\label{rr}
			\mathcal R_R(T,f)
			:=
			2\|f\|_{\mathcal B_{\rho_0}}
			\mathbb E\big[
			\rho_1(X_T(\xi))\cdot\mathbf 1_{\{T>\tau_R\}}
			\big].
		\end{equation*}
		
		\item (Full drift-diffusion perturbation) If, in addition $\tilde b\in \mathbf C^\a_{\rm loc}$, then   for every
		$f\in\mathbf C^\a_{\rho_0}$,
		\begin{align}\label{qp2}
			\begin{split}
				&\left|
				\mathbb E f\big(X_T(\xi)\big)-\mathbb E f\big(\tilde X_T(\tilde\xi)\big)
				\right|
				\leq
				C_0\|f\|_{\mathcal B_{\rho_0}}
				{\bf d}_{\rho_1}(\cL_{\xi},\cL_{\tilde\xi})
				\\
				&\qquad\qquad\quad\,\,+
				C_{T,R}\,
				\Theta_{1,R}
				\|b-\tilde b\|_{\mL^q_p((0,T)\times B_R)}
				\|f\|_{\mathcal B_{\rho_0}}
				\\
				&\qquad\qquad\quad\,\,+
				C_{T,R}\,
				\Theta_{2,\a,R}
				\|a-\tilde a\|_{\mL^\infty((0,T)\times B_R)}
				\|f\|_{\mathbf C^\a_{\rho_0}}
				+
				\mathcal R_R(T,f).
			\end{split}
		\end{align}
	\end{enumerate}
\end{corollary}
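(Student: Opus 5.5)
The plan is to derive Corollary~\ref{Lp} from Theorem~\ref{thm2}, splitting the time integral according to whether the perturbed process has left the ball $B_R$. The initial-distribution term $C_0\|f\|_{\mathcal B_{\rho_0}}{\bf d}_{\rho_1}(\cL_\xi,\cL_{\tilde\xi})$ is taken over unchanged from Theorem~\ref{thm2}. The work is in the perturbation integral
\[
\mathbb E\int_0^T\Theta_1(X_s)\sK_1(T-s)|b_s(X_s)-\tilde b(X_s)|\,\dif s .
\]

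\textbf{Step 1 (localization).} I would not split the integral of Theorem~\ref{thm2} directly. Instead I would go back to the representation in the proof of Theorem~\ref{thm2-abstract},
\[
\mathbb E f(X_T(\xi))-\mathbb E \tilde P_T f(\xi)=\mathbb E\int_0^T\hat f(s,X_s)\,\dif s,
\]
and apply the It\^o--Krylov formula to $\tilde u(t,X_t)$ only up to the stopping time $T\wedge\tau_R$. By Dynkin's formula this gives
\[
\mathbb E\,\tilde u(T\wedge\tau_R,X_{T\wedge\tau_R})-\mathbb E\,\tilde u(0,\xi)=\mathbb E\int_0^{T\wedge\tau_R}\hat f(s,X_s)\,\dif s .
\]
On the event $\{T\le\tau_R\}$ the left-hand side equals $f(X_T)$. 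On $\{T>\tau_R\}$ I would bound both $f(X_T)$ and $\tilde u(\tau_R,X_{\tau_R})=\tilde P_{T-\tau_R}f(X_{\tau_R})$ by $\|f\|_{\mathcal B_{\rho_0}}$ times a weight. Using \eqref{a2} together with the strong Markov property, the moment of $\rho_0$ at later times is controlled by $\rho_1$. This yields the remainder $\mathcal R_R(T,f)$, with the factor $2$ accounting for the two pieces. The bookkeeping here is the delicate step: I need $\rho_0\le\rho_1$, possibly after absorbing $L_0(T)$ into the constants, in order to write the tail purely as $\mathbb E[\rho_1(X_T)\mathbf 1_{\{T>\tau_R\}}]$.

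\textbf{Step 2 (inside the ball).} For $s<\tau_R$ we have $|X_s|<R$, so $\Theta_1(X_s)\le\Theta_{1,R}$. This leaves
\[
\Theta_{1,R}\,\mathbb E\int_0^{T\wedge\tau_R}\sK_1(T-s)\,|b_s-\tilde b|(X_s)\,\dif s .
\]
I would apply the local Krylov estimate to $g(s,x):=\sK_1(T-s)|b_s(x)-\tilde b(x)|\mathbf 1_{B_R}(x)$. The main obstacle is the factor $\sK_1(T-s)\sim (T-s)^{-1/2}$, which is singular at $s=T$. To handle it I would use H\"older's inequality in time: with $d/p+2/q<1$ and $q>2$, one needs exponents $q'$ with $d/p+2/q'<2$ such that $\|\sK_1(T-\cdot)g\|_{L^{q'}_p}\le C_T\|b-\tilde b\|_{\mL^q_p}$. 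This requires $\sK_1(T-\cdot)\in L^{r}(0,T)$ with $1/q'=1/q+1/r$. Since $\sK_1\in L^r$ for every $r<2$, I can choose $r$ close to $2$, and then $2/q'=2/q+2/r<1-d/p+2/r$. Taking $r$ sufficiently close to $2$ makes this less than $2-d/p$, so the Krylov condition holds. Combining gives $C_{T,R}\Theta_{1,R}\|b-\tilde b\|_{\mL^q_p((0,T)\times B_R)}\|f\|_{\mathcal B_{\rho_0}}$.

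\textbf{Step 3 (diffusion term for part (ii)).} The term in $a-\tilde a$ is handled identically, with the Hessian bound \eqref{gr2} and $\Theta_{2,\alpha,R}$. Here the difference $a-\tilde a$ is measured in $L^\infty$, and $\sK_{2,\alpha}\in L^1(0,T)$, so no Krylov estimate is needed: the bound is simply $\Theta_{2,\alpha,R}\|a-\tilde a\|_{\mL^\infty}\|\sK_{2,\alpha}\|_{L^1(0,T)}$. Adding the three contributions from Steps 1--3 gives \eqref{qp2}.
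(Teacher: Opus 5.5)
Your plan follows the paper's own proof. It uses the same three-way split: the initial-law term from Theorem~\ref{thm2}, the It\^o--Krylov identity for $\tilde u(t,X_t)$ stopped at $T\wedge\tau_R$, and a tail on $\{\tau_R<T\}$. It uses the same H\"older-in-time step, putting $\sK_1(T-\cdot)$ in $L^r(0,T)$ with $r$ slightly below $2$ before applying the local Krylov estimate. It bounds the diffusion term directly in $L^\infty$ using $\sK_{2,\alpha}\in L^1(0,T)$.

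There is one slip in the exponent arithmetic. Your chain $2/q'<1-d/p+2/r<2-d/p$ cannot hold for any $r<2$, because the last inequality is equivalent to $r>2$. You have to keep the slack $\delta:=1-d/p-2/q>0$. Then $d/p+2/q'=1-\delta+2/r<2$ exactly when $r>2/(1+\delta)$. So any $r\in\big(\max\{2/(1+\delta),\,q/(q-1)\},2\big)$ works; the lower bound $q/(q-1)$ keeps $q'\geq1$.

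The step that does not go through as you describe it is the tail, and the paper's proof has the same defect. On $\{\tau_R<T\}$, the piece $\tilde u(\tau_R,X_{\tau_R})=\tilde P_{T-\tau_R}f(X_{\tau_R})$ is bounded via \eqref{a2} by $L_0(T)\|f\|_{\mathcal B_{\rho_0}}\rho_1(X_{\tau_R})$. That weight is evaluated at the exit position, not at $X_T$.

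Neither \eqref{a2} nor the strong Markov property can move this to time $T$. Both bound a $\rho_0$-moment at a \emph{later} time by $\rho_1$ at an \emph{earlier} time, which is the opposite direction. The part of your worry about $\rho_0$ versus $\rho_1$ is harmless: \eqref{a2} at $t=0$ gives $\rho_0\leq\tfrac12L_0(0)\rho_1$.

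What the argument actually yields is the remainder
\[
\|f\|_{\mathcal B_{\rho_0}}\,\mathbb E\Big[\big(\rho_0(X_T)+L_0(T)\rho_1(X_{\tau_R})\big)\mathbf 1_{\{\tau_R<T\}}\Big].
\]
For bounded $f$, the bounds $|f|,|\tilde u|\leq\|f\|_\infty$ give directly $2\|f\|_\infty\mathbb P(\tau_R<T)$. That is exactly the form used in the proof of Proposition~\ref{weak-continuity-T}, so nothing downstream is affected.

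The paper reaches the stated $\mathcal R_R(T,f)$ only by writing $\tilde u(t,x)\leq C_0\rho_1(x)$ and then evaluating at $X_T$ instead of $X_{\tau_R}$. The right fix is to state the remainder in the form above, not to look for a Markov-property argument that converts $\rho_1(X_{\tau_R})$ into $\rho_1(X_T)$.
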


\begin{proof}
	We  only prove the drift-diffusion estimate (\ref{qp2}), the drift-only case is the same with the diffusion term removed.
	Write
	\begin{align*}
		\Big|\mathbb E f\big(X_T(\xi)\big)-\mathbb E f\big(\tilde X_T(\tilde\xi)\big)
		\Big|&\leq \Big|\mathbb E f\big(\tilde X_T(\xi)\big)-\mathbb E f\big(\tilde X_T(\tilde\xi)\big)
		\Big|\\
		&+\Big|\mathbb E\,\tilde u\big(T\wedge\tau_R,X_{T\wedge\tau_R}(\xi)\big)-\mE\,\tilde u(0,\xi)\Big|\\
		&+\Big|\mathbb E\,\tilde u\big(T,X_T(\xi)\big)-\mathbb E\,\tilde u\big(T\wedge\tau_R,X_{T\wedge\tau_R}(\xi)\big)\Big|=:I_1+I_2+I_3,
	\end{align*}
	where  $\tilde u$ is defined by (\ref{uu}).
	For the  initial-distribution term, as in the proof of
	Theorem \ref{thm2}, we have
	$$
	I_1\leq C_0\|f\|_{\mathcal B_{\rho_0}}
	{\bf d}_{\rho_1}(\cL_{\xi},\cL_{\tilde\xi}).
	$$
	To control the second term, by the generalized It\^o--Krylov formula applied up to the stopping time
	$T\wedge\tau_R$, we have
	\begin{align*}
		I_2
		=
		\mathbb E\int_0^{T\wedge\tau_R}
		\hat f(s,X_s(\xi))\dif s,
	\end{align*}
	where $\hat f$ is defined by (\ref{ff}).
	On $\{s<\tau_R\}$, one has $X_s\in B_R$. Hence
	$$ \Theta_1(X_s)\leq\Theta_{1,R},\qquad\Theta_{2,\a}(X_s)\leq\Theta_{2,\a,R}.
	$$
	Since
	$d/p+2/q<1$, choose $r\in(1,2)$ sufficiently close
	to $2$ such that, with
	$$
	\frac1{\bar q}:=\frac1q+\frac1r,
	$$
	we have
	$$
	\frac d p+\frac2{\bar q}<2.
	$$
	By the local Krylov estimate applied to
	$$
	f(s,x)=
	\sK_1(T-s)|b_s(x)-\tilde b(x)|\mathbf 1_{B_R}(x),
	$$
	and H\"older's inequality, we get
	\begin{align*}
		\mathbb E\int_0^{T\wedge\tau_R}
		\sK_1(T-s)|b_s(X_s)-\tilde b(X_s)|\dif s
		&\leq
		C_{T,R}
		\|\sK_1(T-\cdot)(b-\tilde b)\|_{\mathbb L^{\bar q}_p((0,T)\times B_R)}
		\\
		&\leq
		C_{T,R}
		\|\sK_1(T-\cdot)\|_{L^r(0,T)}
		\|b-\tilde b\|_{\mathbb L^q_p((0,T)\times B_R)}
		\\
		&\leq
		C_{T,R}
		\|b-\tilde b\|_{\mathbb L^q_p((0,T)\times B_R)}.
	\end{align*}
	For the diffusion term, using the Hessian estimate \eqref{gr2}, we directly have
	\begin{align*}
		&\mathbb E\int_0^{T\wedge\tau_R}
		\sK_{2,\a}(T-s)|a_s(X_s)-\tilde a(X_s)|\dif s\\
		&\leq
		\Theta_{2,\alpha,R}
		\|f\|_{\mathbf C^\alpha_{\rho_0}}
		\|a-\tilde a\|_{\mathbb L^\infty((0,T)\times B_R)}
		\int_0^T \sK_{2,\alpha}(T-s)\dif s.
	\end{align*}
	Since $\alpha>0$, the singularity of $\sK_{2,\alpha}(t)$ at $0$ is integrable.
	It remains to estimate the stopping remainder term $I_3$.
	On the event $\{\tau_R\geq T\}$, one has $T\wedge\tau_R=T$, and therefore $I_3$ is supported on $\{\tau_R< T\}$. Since
	$$
	\tilde u(t,x)\leq C_0\rho_1(x),
	$$
	we have
	$$
	I_3\leq 2\|f\|_{\mathcal B_{\rho_0}}
	\mathbb E\big[
	\rho_1(X_T(\xi))\cdot\mathbf 1_{\{T>\tau_R\}}
	\big].
	$$
	Combining the estimates completes the proof.
\end{proof}

Corollary \ref{Lp} is useful for proving  existence of invariant measures (see Section 4), where one typically takes limits of approximating coefficients and needs to control the error locally. However, for uniqueness and ergodicity arguments,   a direct application of the
Krylov estimate  would destroy the
 convolution structure
\[
\int_0^t \sK(t-s)\sD(s)\dif s,
\]
which is essential for the Gronwall-type inequality in Lemma \ref{gron}. We provide the following $L^q$-$L^p$ pair version, which preserves exactly the convolution kernel and  will be the key tool in Section 6 for deriving the explicit $L^p$-uniqueness threshold for granular media equations.

\begin{proposition}[Total variation stability under $L^p$ drift perturbations]
	\label{Lp-drift-TV-stability}
	Assume that $a_t\equiv\tilde a$. For $p\in(d,\infty]$, let
$
q:=p/(p-1),
$
with the convention $q=1$ when $p=\infty$.
	Suppose that  for every bounded   function $f$,
	\begin{align}\label{Lp-reference-gradient}
		\|\nabla_x \tilde P_t f\|_\infty
		\leq
		\sK(t)\|f\|_\infty,
		\qquad t>0,
	\end{align}
	where $\sK$ is locally integrable.
	Fix $0\leq t_0<t$. Assume that
	$X_s$ has a density $\rho_s$ for $s\in[t_0,t]$ and that
	\begin{align}\label{Lp-density}
		\sup_{s\in[t_0,t]}
		\|\rho_s\|_{L^q}
		\leq
		C_{p,t_0,t}<\infty.
	\end{align}
	Let $\tilde X_r(X_{t_0})$  denote a solution of the reference
	equation \eqref{eq2} starting from the random initial value
	$X_{t_0}$, then for every bounded measurable function $f$,
	\begin{align}\label{Lp-drift-stability}
		&
		\left|
		\mathbb E f(X_t)
		-
		\mathbb E f\big(\tilde X_{t-t_0}(X_{t_0})\big)
		\right|
		\leq
		C_{p,t_0,t}\|f\|_\infty
		\int_{t_0}^t
		\sK(t-s)
		\|b_s-\tilde b\|_{L^p}\dif s.
	\end{align}

	If, for some $t_0\geq0$,
	\begin{align}\label{den}
		\sup_{s\geq t_0}\|\rho_s\|_{L^q}
		\leq C_{p,t_0}<\infty,
	\end{align}
	then $C_{p,t_0,t}$ in the preceding estimates can be replaced by
	$C_{p,t_0}$, uniformly for all $t\geq t_0$.
	
	In particular, if $p=\infty$, then $q=1$,   both (\ref{Lp-density}) and (\ref{den}) hold autonomously.
\end{proposition}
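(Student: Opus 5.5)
We follow the Kolmogorov-equation argument behind Theorem \ref{thm2-abstract}, started at time $t_0$ instead of $0$, and estimate the source term in $L^p$--$L^q$ duality instead of pointwise. The plan has three steps.

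\emph{Step 1 (Duhamel identity).} Fix a bounded measurable $f$ and set $\tilde u(s,x):=\tilde P_{t-s}f(x)$ for $s\in[t_0,t]$. By \cite[Theorem 4.3]{XXZ26}, $\tilde u\in W^{1,2}_{q',p';\rm loc}$ solves $\partial_s\tilde u+\tilde{\mathscr L}\tilde u=0$ with $\tilde u(t,\cdot)=f$. Since $a_s\equiv\tilde a$, applying the generalized It\^o--Krylov formula to $\tilde u(s,X_s)$ on $[t_0,t]$ gives
\begin{align*}
\mathbb E f(X_t)-\mathbb E\tilde u(t_0,X_{t_0})=\mathbb E\int_{t_0}^t (b_s-\tilde b)(X_s)\cdot\nabla_x\tilde u(s,X_s)\,\dif s .
\end{align*}
To make this rigorous we first localize with $\tau_R$, then let $R\to\infty$. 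The stochastic integral has zero mean up to the stopping time. The left side converges because $f$ is bounded and $\|\tilde u\|_\infty\le\|f\|_\infty$. The right side converges by dominated convergence, once Step 3 shows it is absolutely integrable. Next we identify the second term on the left. By the Markov property of the reference equation with random initial value $X_{t_0}$ (weak uniqueness for \eqref{eq2}), conditioning on $X_{t_0}$ gives $\mathbb E\tilde u(t_0,X_{t_0})=\mathbb E \tilde P_{t-t_0}f(X_{t_0})=\mathbb E f(\tilde X_{t-t_0}(X_{t_0}))$. Hence no initial-distance term appears.

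\emph{Step 2 (gradient bound).} Assumption \eqref{Lp-reference-gradient} gives $|\nabla_x\tilde u(s,x)|\le \sK(t-s)\|f\|_\infty$ for all $x$ and all $s<t$.

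\emph{Step 3 (duality with the density).} For each $s\in[t_0,t]$ we write the expectation against the density and apply H\"older's inequality:
\begin{align*}
\mathbb E\big|(b_s-\tilde b)(X_s)\big|=\int_{\mathbb R^d}|b_s-\tilde b|(x)\rho_s(x)\,\dif x\le \|b_s-\tilde b\|_{L^p}\|\rho_s\|_{L^q}\le C_{p,t_0,t}\|b_s-\tilde b\|_{L^p}.
\end{align*}
Combining this with Step 2 and Fubini yields \eqref{Lp-drift-stability}. The same estimate is the integrable majorant needed to remove the localization in Step 1. If $\int_{t_0}^t\sK(t-s)\|b_s-\tilde b\|_{L^p}\dif s=\infty$, the inequality is trivial. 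Under \eqref{den} the constant $C_{p,t_0}$ does not depend on $t$, which gives the uniform version. When $p=\infty$ we have $q=1$, and $\|\rho_s\|_{L^1}=1$ holds automatically. Moreover, the step with the density can be bypassed entirely via $\mathbb E|g(X_s)|\le\|g\|_\infty$, which is valid because $b_s-\tilde b$ is then bounded, and $L^\infty$ can be taken as the essential supremum when $X_s$ has a density. So \eqref{Lp-density} and \eqref{den} hold with constant $1$.

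\emph{Main obstacle.} The delicate step is justifying the It\^o--Krylov formula and the passage $R\to\infty$ on $[t_0,t]$ when $\nabla\tilde u$ is singular like $\sK(t-s)$ near $s=t$ and $b_s-\tilde b$ lies only in $L^p$. I would first apply the formula on $[t_0,t-\varepsilon]$, where $\tilde u$ is smooth enough in $x$. There the singularity is harmless, and the localized expectation is controlled by the Krylov estimate. I would then let $\varepsilon\to0$, using continuity of $s\mapsto\mathbb E\tilde u(s,X_s)$ at $s=t$; this follows from $\tilde P_\varepsilon f\to f$ in $L^1(\rho_t\,\dif x)$, since $\rho_t\in L^q$ and $f$ is bounded. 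Finally I would let $R\to\infty$ via the Step 3 majorant.
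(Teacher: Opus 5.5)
Your proposal follows the paper's proof. The paper also derives
$$\mathbb E f(X_t)-\mathbb E[\tilde P_{t-t_0}f(X_{t_0})]=\int_{t_0}^t\mathbb E[(b_s-\tilde b)(X_s)\cdot\nabla_x\tilde P_{t-s}f(X_s)]\,\mathrm{d}s$$
from the It\^o--Krylov formula on $[t_0,t]$. It then bounds the integrand by $\|\rho_s\|_{L^q}\|b_s-\tilde b\|_{L^p}\mathscr K(t-s)\|f\|_\infty$ using H\"older's inequality, just as you do.

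The paper never addresses the endpoint $s=t$. The justification you give for it in your last paragraph does not work as written. The truncated It\^o formula produces $\mathbb E\,\tilde P_\varepsilon f(X_{t-\varepsilon})$. Convergence $\tilde P_\varepsilon f\to f$ in $L^1(\rho_t\,\mathrm{d}x)$ only controls $\mathbb E\,\tilde P_\varepsilon f(X_t)$, because the evaluation point moves. The gradient bound cannot close the gap either: $\mathscr K(\varepsilon)\,\mathbb E|X_t-X_{t-\varepsilon}|$ is typically of order one, not small. In addition, that $L^1$ convergence is itself unproven for merely bounded measurable $f$.

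The repair is short. First prove the estimate for $f\in C_b(\mathbb R^d)$. Then $(\varepsilon,x)\mapsto\tilde P_\varepsilon f(x)$ is jointly continuous at $\varepsilon=0$ (by localization and the Krylov estimate for $\tilde X$), so $\tilde P_\varepsilon f(X_{t-\varepsilon})\to f(X_t)$ a.s., and bounded convergence applies. Then pass to bounded measurable $f$. This is legitimate because the left-hand side is $f$ integrated against a finite signed measure, whose total variation is already attained over continuous test functions. The right-hand side depends on $f$ only through $\|f\|_\infty$.
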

\begin{proof}
	For a bounded measurable function $f$, similar arguments as before and using It\^o--Krylov formula  on $[t_0,t]$ gives
	\begin{align}\label{Lp-Duhamel-expectation}
		\mathbb E f(X_t)
		-
		\mathbb E
		\left[
		\tilde P_{t-t_0} f(X_{t_0})
		\right]
		=
		\int_{t_0}^t
		\mathbb E
		\left[
		\big(
		b_s(X_s)-\tilde b(X_s)
		\big)
		\cdot
		\nabla_x \tilde P_{t-s} f(X_s)
		\right]\dif s.
	\end{align}
	By H\"older's inequality and \eqref{Lp-reference-gradient}, for \(s\in[t_0,t]\) we have
	\begin{align*}
		&
		\left|
		\mathbb E
		\left[
		\big(
		b_s(X_s)-\tilde b(X_s)
		\big)
		\cdot
		\nabla_x \tilde P_{t-s} f(X_s)
		\right]
		\right|
		\\
		&\qquad\leq
		\|\rho_s\|_{L^q}
		\|b_s-\tilde b\|_{L^p}
		\|\nabla_x \tilde P_{t-s} f\|_\infty
		\leq
		C_{p,t_0,t}
		\sK(t-s)
		\|b_s-\tilde b\|_{L^p}
		\|f\|_\infty.
	\end{align*}
	Integrating this estimate over $s\in[t_0,t]$ in
	\eqref{Lp-Duhamel-expectation} proves
	\eqref{Lp-drift-stability}.
\end{proof}

\subsection{Wasserstein-1 realization}

While the weighted total variation framework provides robust estimates under minimal regularity, the derivative kernels $\sK_1$ and
$\sK_{2,\alpha}$
display short-time singularities that can lead to conservative smallness conditions.
In concrete models with more regular coefficients,  the same reference semigroup
may satisfy sharper estimates in a smaller test-function class.  This leads to
sharper perturbation kernels, which in turn yield sharper smallness conditions in the uniqueness analysis below.

We now record the Lipschitz--Wasserstein version  of the stability estimate, which is particularly useful for mean-field examples.  This version is not meant to replace the
weighted total variation estimate above; rather, it shows that sharper kernels
are available when the reference semigroup contracts Lipschitz functions.

\begin{corollary}[$W_1$ stability]
	Assume $(\mathbf A_0)$.  Suppose that the reference semigroup satisfies
	\begin{equation}\label{Lip-semigroup}
		|\nabla_x\tilde{P}_t f(x)|
		\leq
		\ell_1(t)\operatorname{Lip}(f),
		\qquad t\geq0,
	\end{equation}
	for every Lipschitz continuous function $f$, where $\ell_1$ is locally integrable.
	
	Then, in the drift-only case $a_t\equiv\tilde a$, we have
	\begin{equation}\label{W1-stability-drift}
		\begin{split}
			&
			\left|
			\mE f\big(X_t(\xi)\big)
			-
			\mE f\big(\tilde X_t(\tilde\xi)\big)
			\right|
			\\
			&\leq
			\operatorname{Lip}(f)
			\bigg[
			\ell_1(t)W_1(\cL_\xi,\cL_{\tilde\xi})
			+
			\mE\int_0^t
			\ell_1(t-s)
			\left|
			b_s\big(X_s(\xi)\big)
			-
			\tilde b\big(X_s(\xi)\big)
			\right|
			\dif s
			\bigg].
		\end{split}
	\end{equation}
	
	If diffusion perturbations are also present and, for the chosen test class,
	\begin{equation}\label{Lip-hessian}
		|\nabla_x^2\tilde{P}_t f(x)|
		\leq
		\Theta_{2,{\rm Lip}}(x)\ell_2(t)\operatorname{Lip}(f),
	\end{equation}
	then the additional term
	\begin{equation*}\label{W1-diff-term}
		\operatorname{Lip}(f)\,
		\mE\int_0^t
		\Theta_{2,{\rm Lip}}\big(X_s(\xi)\big)\ell_2(t-s)
		\left|
		a_s\big(X_s(\xi)\big)
		-
		\tilde a\big(X_s(\xi)\big)
		\right|
		\dif s
	\end{equation*}
	should be added to the right-hand side of \eqref{W1-stability-drift}.
\end{corollary}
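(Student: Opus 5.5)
The plan is to derive the $W_1$ stability corollary from the proof of Theorem~\ref{thm2-abstract} with $\mathfrak F=\mathrm{Lip}(\mathbb R^d)$, $\|f\|_{\mathfrak F}=\operatorname{Lip}(f)$ and ${\bf d}_{\mathfrak F}=W_1$. Functions of this class grow at most linearly, so I will assume $\rho_0\ge 1+|x|$ in $(\mathbf A_0)$-(b). Then $\mathfrak F\subseteq\mathcal B_{\rho_0}$ up to additive constants, and adding a constant to $f$ changes neither side of \eqref{W1-stability-drift}. I will use \eqref{Lip-semigroup} as the gradient bound \eqref{abstract-grad} with $\Theta_{1,\mathfrak F}\equiv1$ and $\sK_{1,\mathfrak F}=\ell_1$. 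In the diffusion case, \eqref{Lip-hessian} serves as \eqref{abstract-hess} with $\Theta_{2,\mathfrak F}=\Theta_{2,{\rm Lip}}$ and $\sK_{2,\mathfrak F}=\ell_2$.

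The one point that differs from the abstract statement is the initial term. Theorem~\ref{thm2-abstract} only gives $C_{\mathfrak F}\,{\bf d}_{\mathfrak F}$, whereas here I need $\ell_1(t)W_1$. I will get this sharper bound from \eqref{Lip-semigroup} directly. Since $\operatorname{Lip}(\tilde P_tf)\le\ell_1(t)\operatorname{Lip}(f)$, Kantorovich--Rubinstein duality gives
\[
\big|\mE\tilde P_tf(\xi)-\mE\tilde P_tf(\tilde\xi)\big|\le \ell_1(t)\operatorname{Lip}(f)\,W_1(\cL_\xi,\cL_{\tilde\xi}).
\]
Equivalently, I can pick an optimal coupling of $(\xi,\tilde\xi)$ and apply the mean value inequality to $\tilde P_tf$.

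For the remaining terms I will split as in the proof of Theorem~\ref{thm2-abstract}:
\[
\mE f(X_t(\xi))-\mE f(\tilde X_t(\tilde\xi))=\big[\mE f(X_t(\xi))-\mE\tilde P_tf(\xi)\big]+\big[\mE\tilde P_tf(\xi)-\mE\tilde P_tf(\tilde\xi)\big].
\]
For the first bracket I set $\tilde u(s,x)=\tilde P_{t-s}f(x)$ and apply the generalized It\^o--Krylov formula along $X_s(\xi)$, which uses \cite[Theorem 4.3]{XXZ26}. This yields
\[
\mE\int_0^t\Big[(b_s-\tilde b)\cdot\nabla\tilde u+\operatorname{tr}\big((a_s-\tilde a)\nabla^2\tilde u\big)\Big](s,X_s(\xi))\,\dif s.
\]
I then bound $|\nabla\tilde u(s,\cdot)|\le\ell_1(t-s)\operatorname{Lip}(f)$ and $|\nabla^2\tilde u(s,x)|\le\Theta_{2,{\rm Lip}}(x)\ell_2(t-s)\operatorname{Lip}(f)$. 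Together with the initial term, this gives \eqref{W1-stability-drift} plus the additional diffusion term.

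The main obstacle is justifying the Itô/Kolmogorov step for unbounded Lipschitz $f$. The uniqueness theory of \cite{XXZ26} is stated in weighted classes, so I will first truncate $f$ to $f_R=(f\wedge R)\vee(-R)$. These truncations are Lipschitz with $\operatorname{Lip}(f_R)\le\operatorname{Lip}(f)$, so the estimate holds for each $f_R$ with the same right-hand side. I will then let $R\to\infty$ on the left-hand side, using dominated convergence with the bound $|f_R|\le C\rho_0$ and the moment bound \eqref{a2}. Local integrability of $\ell_1$ and $\ell_2$ keeps the time integrals finite.
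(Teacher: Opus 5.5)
Your proposal is correct and is essentially the paper's proof. You take $\mathfrak F=\mathrm{Lip}(\mathbb R^d)$ in Theorem~\ref{thm2-abstract} with $\Theta_{1,\mathfrak F}\equiv1$, $\sK_{1,\mathfrak F}=\ell_1$ and $\sK_{2,\mathfrak F}=\ell_2$, and you replace the generic initial term $C_{\mathfrak F}{\bf d}_{\mathfrak F}(\cL_\xi,\cL_{\tilde\xi})$ by the sharper bound $\ell_1(t)\operatorname{Lip}(f)W_1(\cL_\xi,\cL_{\tilde\xi})$, obtained from Kantorovich--Rubinstein duality applied to $\tilde P_tf$. Your truncation $f_R$ and the assumption $\rho_0\geq 1+|x|$ only make explicit the It\^o--Krylov and localization justification for unbounded Lipschitz test functions, which the paper leaves implicit.
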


\begin{proof}
	By Kantorovich--Rubinstein duality and \eqref{Lip-semigroup},
	$$
	\left|
	\mE \tilde{P}_t f(\xi)
	-
	\mE \tilde{P}_t f(\tilde\xi)
	\right|
	\leq
	\operatorname{Lip}(\tilde{P}_t f)
	W_1(\cL_\xi,\cL_{\tilde\xi})
	\leq
	\ell_1(t)\operatorname{Lip}(f)
	W_1(\cL_\xi,\cL_{\tilde\xi}).
	$$
	The conclusion follows from Theorem \ref{thm2-abstract} with
	$$
	\sK_{1,\mathfrak F}=\ell_1,
	\qquad
	\sK_{2,\mathfrak F}=\ell_2.
	$$
	The diffusion
	perturbation term follows from the additional Hessian estimate
	\eqref{Lip-hessian}.
\end{proof}

\br\label{rem:W1}
The
$W_1$ realization is particularly useful when the reference equation is a contraction in the Lipschitz sense.  For instance, consider
$$
\dif X_t= b(X_t)\dif t+\sigma \dif W_t,
$$
where $\sigma$ is a constant non-degenerate matrix and  $b\in C^1(\mathbb{R}^d)$ satisfies
$$
-\<\nabla_x b(x)v,v\>\geq \lambda|v|^2,\quad\forall x,v\in\mathbb{R}^d,
$$
for some $\lambda>0$. In such cases, synchronous coupling gives
\[
|\nabla_x\tilde{P}_t f(x)|
\leq
\e^{-\lambda t}\operatorname{Lip}(f).
\]
Thus \eqref{Lip-semigroup} holds with
$\ell_1(t)=\e^{-\lambda t}$, which  decays exponentially without a short-time singularity. This absence of short-time singularity is precisely
what leads to sharper \(W_1\)-thresholds in models where the law dependence
acts through low-order moments.
\er
\section{Existence of invariant measures}

In this section, we focus on proving the existence of invariant measures.
Throughout this section, $M_0>0$ and $V$ are as in assumption $(\mathbf H_1)$.

\subsection{Properties of the frozen invariant measure map}
For every
$\mu\in \sP_V^{M_0}(\mathbb{R}^d)$, consider the frozen autonomous SDE (\ref{000}), i.e.,
\begin{equation}\label{fr3}
	\dif X_t^{\mu}
	=
	b(X_t^{\mu},\mu)\dif t
	+
	\sigma(X_t^{\mu},\mu)\dif W_t,
	\qquad
	X_0^{\mu}=x.
\end{equation}
Its Markov semigroup is denoted by
\begin{equation*}
	P_t^\mu f(x)
	:=
	\mathbb E f(X_t^{\mu}(x)),
	\qquad t\geq0.
\end{equation*}
Under $(\mathbf H_0)$, the local non-degeneracy implies the strong Feller property and irreducibility of the frozen semigroup (see \cite{XZ}). Moreover, the Lyapunov condition $(\mathbf H_1)$ ensures that the frozen equation  is non-explosive and admits a unique invariant measure (see \cite[Theorem 4.2]{MT93}). We denote this invariant measure by $\nu_\mu$ and define the frozen invariant measure map
\begin{equation*}
	\sT(\mu):=\nu_\mu,
	\qquad
	\mu\in \sP_V^{M_0}(\mathbb{R}^d).
\end{equation*}

We begin by establishing the weak compactness of the Lyapunov moment ball.

\begin{lemma}
\label{weak-compactness}
Assume $(\mathbf H_1)$. Then $\sP_V^{M_0}(\mathbb{R}^d)$ is a compact convex
subset of $\mathscr P(\mathbb R^d)$ endowed with the weak topology.
\end{lemma}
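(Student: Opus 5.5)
Convexity is immediate: for $\mu_1,\mu_2\in\sP_V^{M_0}(\mR^d)$ and $\theta\in[0,1]$, the mixture $\theta\mu_1+(1-\theta)\mu_2$ is a probability measure. By linearity of $\mu\mapsto\mu(V)$ we have $(\theta\mu_1+(1-\theta)\mu_2)(V)=\theta\mu_1(V)+(1-\theta)\mu_2(V)\leq M_0$. The set is nonempty in the relevant cases (e.g. it contains the Dirac mass at a minimizer of $V$ once $M_0\geq\inf V$), though compactness and convexity hold regardless.

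For compactness, I would use Prokhorov's theorem together with closedness. For tightness, the compactness of the level sets $K_R:=\{V\leq R\}$ is the key input. For any $\mu\in\sP_V^{M_0}$, Markov's inequality gives
\[
\mu(K_R^c)=\mu(\{V>R\})\leq \frac{\mu(V)}{R}\leq\frac{M_0}{R},
\]
uniformly in $\mu$. Hence the family is tight, and by Prokhorov it is relatively compact in the weak topology. Since $\sP(\mR^d)$ with the weak topology is metrizable (e.g. by the L\'evy--Prokhorov metric), it suffices to show sequential closedness.

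To show closedness, let $\mu_n\in\sP_V^{M_0}$ with $\mu_n\Rightarrow\mu$. I would show $\mu(V)\leq\liminf_n\mu_n(V)\leq M_0$. This is the only delicate point, because $V$ is merely lower semicontinuous and unbounded, so $\mu_n(V)\to\mu(V)$ need not hold. I would approximate $V$ from below by bounded continuous functions. Since $V$ is lower semicontinuous and bounded below (positive), there exist bounded Lipschitz functions $V_k\uparrow V$ pointwise, for instance $V_k(x):=\min\{k,\inf_y(V(y)+k|x-y|)\}$. Then $\mu(V_k)=\lim_n\mu_n(V_k)\leq\liminf_n\mu_n(V)\leq M_0$ for every $k$. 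Monotone convergence yields $\mu(V)\leq M_0$. (Equivalently, one may invoke the portmanteau theorem for lower semicontinuous functions bounded below.) Thus $\mu\in\sP_V^{M_0}$, so the set is closed, and together with relative compactness it is compact.

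The main obstacle is the lower semicontinuity step. It is handled by the standard inf-convolution (Lipschitz regularization) argument, which requires only that $V$ be lower semicontinuous and bounded below; both hold under $(\mathbf H_1)$.
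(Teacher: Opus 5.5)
Your proof is correct and follows the paper's argument: convexity by linearity of $\mu\mapsto\mu(V)$, uniform tightness from Chebyshev's inequality and the compact level sets of $V$, weak closedness from $\mu(V)\leq\liminf_n\mu_n(V)\leq M_0$, and then Prokhorov's theorem. The only difference is that you prove the portmanteau step for the lower semicontinuous $V$ explicitly via the Lipschitz inf-convolution $V_k\uparrow V$, whereas the paper simply cites the portmanteau theorem.
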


\begin{proof}
The convexity of $\sP_V^{M_0}(\mathbb{R}^d)$ is  immediate.
For every $L>0$ and every $\mu\in \sP_V^{M_0}(\mathbb{R}^d)$,  Chebyshev's inequality gives
\[
\mu(\{V>L\})
\leq
\frac{\mu(V)}{L}
\leq
\frac{M_0}{L}.
\]
Since $V$ has compact level sets, $\sP_V^{M_0}(\mathbb{R}^d)$ is uniformly tight.
Now suppose
\[
\mu_n\in \sP_V^{M_0}(\mathbb{R}^d),
\qquad
\mu_n\Rightarrow\mu.
\]
Since $V$ is nonnegative and lower semicontinuous, the Portmanteau
theorem yields
\[
\mu(V)
\leq
\liminf_{n\to\infty}\mu_n(V)
\leq M_0.
\]
Thus the moment ball is  tight and weakly closed. By Prokhorov's theorem, it is weakly compact.
\end{proof}

Next, we establish the crucial invariance of the Lyapunov moment ball under the  frozen invariant-measure map $\sT$.
\begin{lemma}
\label{tightness}
Assume $(\mathbf H_0)$ and $(\mathbf H_1)$. Then the map $\sT$ is well
defined on $\sP_V^{M_0}(\mathbb{R}^d)$ and
\begin{align}\label{T-preserves-moment-ball}
\sT(\sP_V^{M_0}(\mathbb{R}^d))\subset \sP_V^{M_0}(\mathbb{R}^d).
\end{align}
\end{lemma}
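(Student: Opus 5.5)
The plan is to fix $\mu\in\sP_V^{M_0}(\mathbb R^d)$ and work with the frozen SDE \eqref{fr3}. For this fixed $\mu$, the term $\mu(V)\le M_0$ is a constant, so \eqref{HL-general} turns into an ordinary Lyapunov inequality for the frozen generator:
\[
\mathscr L_\mu U(x)\leq -\kappa_0V(x)+\sum_{i=1}^N c_iM_0^{\beta_i}V(x)^{\alpha_i}+c_0 .
\]
Since each $\alpha_i<1$, Young's inequality gives $\mathscr L_\mu U\leq -\tfrac{\kappa_0}{2}V+C$. Together with compact level sets of $U$, this yields three facts:
\begin{itemize}
\item non-explosion, by localisation with $\tau_R$ and the It\^o--Krylov formula, which is available under $(\mathbf H_0)$;
\item the bound $\frac1T\int_0^T\mathbb E V(X^\mu_s(x))\dif s\le \frac{2}{\kappa_0}(U(x)/T+C)$;
\item tightness of the Ces\`aro averages, because $V$ has compact level sets.
\end{itemize}
Krylov--Bogoliubov then gives an invariant measure. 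Uniqueness comes from strong Feller plus irreducibility (the cited results), so $\nu_\mu$ is well defined.

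For the moment bound I would not use the crude constant $C$; the trapping has to be exploited at level $M_0$. Integrate the Lyapunov inequality against $\nu_\mu$. Formally $\nu_\mu(\mathscr L_\mu U)=0$, and rigorously one has $\nu_\mu(\mathscr L_\mu U)\ge 0$ in the limit; see the hardest step below. This gives
\[
\kappa_0\,\nu_\mu(V)\le \sum_i c_i M_0^{\beta_i}\,\nu_\mu(V^{\alpha_i})+c_0\le \sum_i c_iM_0^{\beta_i}\,\nu_\mu(V)^{\alpha_i}+c_0 ,
\]
where the last step is Jensen's inequality ($\alpha_i\in[0,1)$). Set $m:=\nu_\mu(V)$, which is finite by the Ces\`aro bound and lower semicontinuity. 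The function
\[
\phi(s):=\kappa_0 s-\sum_i c_iM_0^{\beta_i}s^{\alpha_i}-c_0
\]
satisfies $\phi(m)\le0$, while \eqref{trapping-general} says $\phi(M_0)\ge0$. Dividing by $s$, the map $s\mapsto\phi(s)/s=\kappa_0-\sum_i c_iM_0^{\beta_i}s^{\alpha_i-1}-c_0/s$ is nondecreasing. Hence $\phi(s)>0$ for $s>M_0$, or at least $\phi(s)/s\ge \phi(M_0)/M_0\ge0$ for $s\ge M_0$. This forces $m\le M_0$. The boundary case where equality holds and $\phi(s)/s$ is constant needs a short check: it occurs only when all $c_i=0$ and $c_0=0$, which is trivial. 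Thus $\sT(\mu)\in\sP_V^{M_0}$.

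The hardest step is justifying $\nu_\mu(\mathscr L_\mu U)\ge0$, or avoiding it, since $U$ and $V$ are unbounded and $V$ is merely lower semicontinuous. My plan is to start the frozen process from the stationary law $\nu_\mu$ restricted to a compact set, or from a point $x$, and apply It\^o's formula to $U(X_{t\wedge\tau_R})$. Using $U\ge0$, this gives
\[
\kappa_0\,\mathbb E\!\int_0^{T\wedge\tau_R}V\,\dif s\le U(x)+\mathbb E\!\int_0^{T\wedge\tau_R}\Big(\sum_i c_iM_0^{\beta_i}V^{\alpha_i}+c_0\Big)\dif s .
\]
Next let $R\to\infty$ by monotone convergence, divide by $T$, and apply Jensen in time and space to the averaged occupation measure $\pi_T:=\frac1T\int_0^T\delta_xP^\mu_s\dif s$:
\[
\kappa_0\,\pi_T(V)\le U(x)/T+\sum_ic_iM_0^{\beta_i}\pi_T(V)^{\alpha_i}+c_0 .
\]
The monotonicity argument above then gives $\pi_T(V)\le M_0+o(1)$ as $T\to\infty$. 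Finally, $\pi_T\Rightarrow\nu_\mu$ (by tightness and uniqueness of the invariant measure), and lower semicontinuity of $V$ with Portmanteau, as in Lemma \ref{weak-compactness}, gives $\nu_\mu(V)\le\liminf_T\pi_T(V)\le M_0$. This route avoids any integrability issue for $\mathscr L_\mu U$ under $\nu_\mu$.
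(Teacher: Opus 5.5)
Your proof is correct. The scalar trapping step is the paper's own argument: Jensen gives $\nu(V^{\alpha_i})\le\nu(V)^{\alpha_i}$, and your monotonicity of $\phi(s)/s$ is exactly the paper's contradiction obtained by dividing by $r$ and using $r^{\alpha_i-1}<M_0^{\alpha_i-1}$.

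Where you differ is in justifying the integrated Lyapunov inequality. The paper asserts $\int_{\mathbb R^d}\mathscr L_\mu U\,\dif\nu=0$ via a ``standard localization argument'' (citing Meyn--Tweedie) and integrates \eqref{HL-general} directly against $\nu=\sT(\mu)$. It also takes the existence and uniqueness of $\nu_\mu$ from the cited strong Feller, irreducibility and Lyapunov results. You instead apply It\^o's formula from a fixed starting point $x$, which yields the inequality for the occupation measures $\pi_T$ with an error term $U(x)/T$. You then apply Jensen and the trapping at finite $T$ to get $\limsup_T\pi_T(V)\le M_0$. Finally you pass to the limit using only $\pi_T\Rightarrow\nu_\mu$ and Portmanteau for the lower semicontinuous $V$.

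What your route buys is rigor at exactly the delicate point. The identity $\nu(\mathscr L_\mu U)=0$ implicitly needs something like $U\in L^1(\nu)$ or $\mathscr L_\mu U\in L^1(\nu)$, neither of which is established; only the one-sided inequality $\kappa_0\nu(V)\le\nu\big(\sum_i c_iM_0^{\beta_i}V^{\alpha_i}+c_0\big)$ is actually needed. By performing the nonlinear trapping before taking the limit, you also avoid passing to the limit in $\pi_T(V^{\alpha_i})$, which would be awkward because $V$ is only lower semicontinuous. The paper's version is shorter but leans entirely on the cited identity.

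Your strict-monotonicity step is fine because $(\mathbf H_1)$ requires $c_i>0$. Uniqueness of $\nu_\mu$, which you have, is what makes $\pi_T$ converge to $\nu_\mu$ rather than merely having invariant limit points.
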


\begin{proof}
Fix $\mu\in \sP_V^{M_0}(\mathbb{R}^d)$ and let
\[
\nu:=\sT(\mu).
\]
Since $\nu$  is the invariant measure for \eqref{fr3},  a standard
localization argument  (see, e.g., \cite[Theorem 4.2]{MT93})  yields
$$
\int_{\mR^d}\sL_{\mu}U(x)\nu(\dif x)=0.
$$
Using the Lyapunov inequality \eqref{HL-general}  and integrating with respect to
$\nu$, we obtain
\[
\kappa_0\nu(V)
\leq
\sum_{i=1}^N
c_i\nu(V^{\alpha_i})\mu(V)^{\beta_i}
+c_0.
\]
Note that $r\mapsto r^{\alpha_i}$ is concave on $[0,\infty)$ with $\alpha_i\in[0,1)$,
Jensen's inequality gives
\[
\nu(V^{\alpha_i})
\leq
\nu(V)^{\alpha_i}.
\]
Set $r:=\nu(V).$
Since $\mu(V)\leq M_0$, we have
\begin{align}\label{scalar-T-estimate}
\kappa_0r
\leq
\sum_{i=1}^N
c_ir^{\alpha_i}M_0^{\beta_i}
+c_0.
\end{align}
Suppose, for contradiction, that $r>M_0$. Dividing \eqref{scalar-T-estimate} by $r$ gives
\[
\kappa_0
\leq
\sum_{i=1}^N
c_ir^{\alpha_i-1}M_0^{\beta_i}
+c_0r^{-1}.
\]
Since $\alpha_i<1$ and $r>M_0$, we have
\[
r^{\alpha_i-1}<M_0^{\alpha_i-1},
\qquad
r^{-1}<M_0^{-1}.
\]
Consequently,
\[
\kappa_0
<
\sum_{i=1}^N
c_iM_0^{\alpha_i+\beta_i-1}
+c_0M_0^{-1}
\leq
\kappa_0,
\]
where the last inequality follows from
\eqref{trapping-general}. This contradiction proves
\[
\sT(\mu)(V)\leq M_0.
\]
Hence \eqref{T-preserves-moment-ball} holds.
\end{proof}

To prove the continuity of the invariant measure map $\sT$, we first establish a uniform compact containment property for the frozen processes.

\begin{lemma}
	\label{uniform-compact-containment}
	Assume $(\mathbf H_1)$. Then, for every $t>0$ and $L>0$,
	\begin{align*}
		\lim_{R\to\infty}
		\sup_{\mu\in \sP_V^{M_0}(\mathbb{R}^d)}
		\sup_{x\in B_L}
		\mathbb P
		\left(
		\sup_{0\leq s\leq t}|X_s^{\mu}(x)|\geq R
		\right)
		=0.
	\end{align*}
	In particular, all frozen equations with parameter in $\sP_V^{M_0}(\mathbb{R}^d)$ are
	non-explosive.
\end{lemma}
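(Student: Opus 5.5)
The natural route is a stopped Itô (Dynkin) argument for the Lyapunov function $U$ from $(\mathbf H_1)$, with the law variable $\mu$ entering only through the scalar $\mu(V)\le M_0$. This gives a bound on the right-hand side of the Lyapunov inequality that is uniform in $\mu$.

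\emph{Step 1: uniform drift bound.} For $\mu\in\sP_V^{M_0}(\mathbb R^d)$, the estimate \eqref{HL-general} gives
\[
\mathscr L_\mu U(x)\le -\kappa_0V(x)+\sum_{i=1}^N c_iV(x)^{\alpha_i}M_0^{\beta_i}+c_0 .
\]
Since $\alpha_i<1$, Young's inequality gives $c_iM_0^{\beta_i}V^{\alpha_i}\le \tfrac{\kappa_0}{N}V+C_i$. Hence there is a constant $K$, depending only on $\kappa_0,c_i,\alpha_i,\beta_i,M_0$, such that
\[
\mathscr L_\mu U\le K\qquad\text{for all }\mu\in\sP_V^{M_0}(\mathbb R^d).
\]

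\emph{Step 2: stopped Itô formula.} Let $\tau_n:=\inf\{s\ge0: U(X^\mu_s(x))\ge n\}$. Up to $\tau_n$ the process stays in the compact set $\{U\le n\}$. There the coefficients are locally bounded under alternative (i), or $b\in L^p_{\rm loc}$ under (ii), and $U\in\mathbf C^2$. So Itô's formula applies, via the Itô--Krylov formula in case (ii), and the stochastic integral is a true martingale up to $\tau_n$. Therefore
\[
\mathbb E\,U(X^\mu_{t\wedge\tau_n}(x))\le U(x)+Kt .
\]

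\emph{Step 3: exit probability.} Write $u_n:=\mathbb P(\tau_n\le t)$. Since $U(X_{\tau_n})=n$ on $\{\tau_n\le t\}$,
\[
n\,\mathbb P\Big(\sup_{s\le t}U(X^\mu_s(x))\ge n\Big)\le n\,u_n\le \sup_{y\in B_L}U(y)+Kt ,
\]
and the right-hand side is uniform in $\mu\in\sP_V^{M_0}(\mathbb R^d)$ and $x\in B_L$. Letting $n\to\infty$ gives $\tau_n\to\infty$ a.s., which is non-explosion.

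\emph{Step 4: convert to Euclidean balls.} Level sets of $U$ are compact, so $U$ is coercive: $m(R):=\inf_{|y|\ge R}U(y)\to\infty$ as $R\to\infty$. If this failed, there would be points $|y_k|\to\infty$ with $U(y_k)\le C$, contradicting compactness of $\{U\le C\}$. Then
\[
\Big\{\sup_{s\le t}|X_s|\ge R\Big\}\subset\Big\{\sup_{s\le t}U(X_s)\ge m(R)\Big\}.
\]
By path continuity before explosion, the bound of Step 3 applies with $n$ replaced by $m(R)$. This gives the bound $(\sup_{B_L}U+Kt)/m(R)\to0$, uniformly in $\mu$ and $x$.

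The main obstacle is Step 2 under the singular alternative (ii), where $b(\cdot,\mu)$ is only in $L^p_{\rm loc}$. There one must justify the Itô formula for a $\mathbf C^2$ function along a weak solution. I would appeal to the Krylov estimate (the local lemma in Section 3) up to the exit time, which makes $\int_0^{t\wedge\tau_n}|b\cdot\nabla U|(X_s)\,\dif s$ integrable. The inequality $\mathscr L_\mu U\le K$ then holds pointwise a.e., and the Krylov estimate ensures the occupation measure does not charge null sets.
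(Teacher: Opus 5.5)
Your proof is correct and follows essentially the same route as the paper: a $\mu$-uniform bound $\mathscr L_\mu U\le K$ from $\alpha_i<1$ and $\mu(V)\le M_0$, then a stopped It\^o formula giving $\mathbb E\,U(X_{t\wedge\tau})\le U(x)+Kt$, then a Chebyshev-type bound using the coercivity of $U$. The differences are cosmetic: you stop at level sets of $U$ and pass to Euclidean balls via $m(R)=\inf_{|y|\ge R}U(y)$, whereas the paper stops directly at the Euclidean exit time and uses $U_R=\inf_{|x|\ge R}U(x)$; and your use of the Krylov estimate to justify the It\^o step under alternative (ii) addresses a point the paper leaves implicit.
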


\begin{proof}
	Since $\alpha_i<1$, for every $\varepsilon>0$ there exists
	$C_\varepsilon>0$ such that
	\[
	\sum_{i=1}^N
	c_iM_0^{\beta_i}r^{\alpha_i}
	\leq
	\varepsilon r+C_\varepsilon,
	\qquad r\geq0.
	\]
	Using \eqref{HL-general} and choosing
	$\varepsilon\in(0,\kappa_0)$, we obtain
	\begin{align}\label{uniform-LU-upper}
		\mathscr L_\mu U(x)
		\leq C_{M_0},
		\qquad
		x\in\mathbb R^d,\quad
		\mu\in \sP_V^{M_0}(\mathbb{R}^d),
	\end{align}
	for some constant $C_{M_0}>0$  independent of $\mu$.
	Set
	\[
	U_R:=\inf_{|x|\geq R}U(x).
	\]
	Since $U$ has compact level sets,
	\[
	U_R\longrightarrow\infty
	\qquad\text{as }R\to\infty.
	\]
	Define the exit time
	\[
	\tau_R^{x,\mu}
	:=
	\inf\{s\geq0:|X_s^{\mu}(x)|\geq R\}.
	\]
	By It\^o's formula applied to $U$ up to $t\wedge\tau_R^{x,\mu}$, together with  \eqref{uniform-LU-upper},
	\[
	\mathbb E
	U\left(
	X_{t\wedge\tau_R^{x,\mu}}^{\mu}(x)
	\right)
	\leq
	U(x)+C_{M_0}t.
	\]
	On the event $\{\tau_R^{x,\mu}\leq t\}$, we have
	\[
	U\left(X_{\tau_R^{x,\mu}}^{\mu}(x)\right)\geq U_R.
	\]
	Therefore,
	\[
	\mathbb P(\tau_R^{x,\mu}\leq t)
	\leq
	\frac{U(x)+C_{M_0}t}{U_R}.
	\]
	Taking supremum over
	$\mu\in \sP_V^{M_0}(\mathbb{R}^d)$ and $x\in B_L$ gives
	\[
	\sup_{\mu\in \sP_V^{M_0}(\mathbb{R}^d)}
	\sup_{x\in B_L}
	\mathbb P(\tau_R^{x,\mu}\leq t)
	\leq
	\frac{
		\sup_{x\in B_L}U(x)+C_{M_0}t
	}{U_R},
	\]
	which tends to zero as $R\to\infty$.
\end{proof}

We now establish the  weak continuity of the frozen invariant measure map.

\begin{proposition}
\label{weak-continuity-T}
Assume $(\mathbf H_0)$--$(\mathbf H_2)$. Then
\[
\sT:\sP_V^{M_0}(\mathbb{R}^d)\longrightarrow \sP_V^{M_0}(\mathbb{R}^d)
\]
is continuous with respect to the weak topology.
\end{proposition}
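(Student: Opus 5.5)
To prove Proposition \ref{weak-continuity-T}, take $\mu_n,\mu\in\sP_V^{M_0}(\mathbb R^d)$ with $\mu_n\Rightarrow\mu$ and set $\nu_n:=\sT(\mu_n)$. By Lemma \ref{tightness}, $\nu_n\in\sP_V^{M_0}(\mathbb R^d)$. By Lemma \ref{weak-compactness}, this ball is weakly compact, so every subsequence of $(\nu_n)$ has a further subsequence converging weakly to some $\nu\in\sP_V^{M_0}(\mathbb R^d)$. The plan is to show that any such limit $\nu$ is invariant for the frozen semigroup $P_t^\mu$. Uniqueness of the invariant measure of the frozen equation \eqref{fr3} (from strong Feller, irreducibility and the Lyapunov condition, as recalled above) then forces $\nu=\nu_\mu=\sT(\mu)$. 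Since every subsequence has a further subsequence with this same limit, the whole sequence satisfies $\nu_n\Rightarrow\sT(\mu)$.

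To identify the limit, fix $t>0$ and $f\in{\bf C}_b(\mathbb R^d)$ (bounded functions lie in $\cB_{\rho_0}$ since $\rho_0\geq1$). Invariance gives $\nu_n(f)=\nu_n(P_t^{\mu_n}f)$, and we decompose
\[
\nu_n(P^{\mu_n}_tf)-\nu(P^\mu_tf)=\nu_n\big(P^{\mu_n}_tf-P^\mu_tf\big)+\big(\nu_n-\nu\big)(P^\mu_tf).
\]
For the second term we use the strong Feller property of $P_t^\mu$ under $(\mathbf H_0)$: $P^\mu_tf$ is bounded and continuous, so weak convergence sends this term to zero. The left side tends to $\nu(f)-\nu(P_t^\mu f)$, so it remains to show the first term vanishes. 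Tightness of $(\nu_n)$ (uniform $V$-moment bound, with $V$ having compact level sets) reduces this to proving
\[
\sup_{x\in B_L}|P^{\mu_n}_tf(x)-P^\mu_tf(x)|\to0\qquad\text{for each }L>0.
\]

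This locally uniform convergence is the main step, and we obtain it from Corollary \ref{Lp}. Take the reference equation to be the frozen equation at $\mu$ and the perturbed equation to be the frozen equation at $\mu_n$ (time-independent coefficients), with the same initial point $x$, so the initial-distance term vanishes. Choose $\rho_0=\rho_1\equiv1$ and $\ell_0$ from the ergodicity of $P^\mu$ (only finite-time bounds matter here). Corollary \ref{Lp} then bounds the difference by
\[
C_{t,R}\big(\|b(\cdot,\mu_n)-b(\cdot,\mu)\|_{L^p(B_R)}+\|a(\cdot,\mu_n)-a(\cdot,\mu)\|_{L^\infty(B_R)}\big)\|f\|+2\|f\|_\infty\,\mathbb P(\tau_R^{x,\mu_n}<t).
\]
By $(\mathbf H_2)$ the coefficient term tends to zero for fixed $R$. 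By Lemma \ref{uniform-compact-containment}, the exit probability is small uniformly in $n$ and $x\in B_L$ once $R$ is large.

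Two technical points need care. First, in the diffusion-dependent case, Corollary \ref{Lp} is stated for Hölder test functions; here we first take $f$ Lipschitz and bounded, which suffices since such functions are convergence determining, or equivalently we apply the semigroup property with a short-time step. Second, the constants $C_{t,R}$ and $\Theta_R$ must be uniform over $x\in B_L$ and independent of $n$. This should follow from local boundedness of the weights in Lemma \ref{esu}, which depend only on the fixed reference $\mu$, and from the fact that the Krylov constant depends only on local ellipticity and local norms of the coefficients. Those local norms are uniformly bounded in $n$ by $(\mathbf H_0)$ together with the convergence in $(\mathbf H_2)$. In case (ii) of $(\mathbf H_0)$ the coefficient $a$ does not depend on $\mu$, so only the drift term appears.
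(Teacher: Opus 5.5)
Your proposal is correct and follows essentially the same route as the paper. Both arguments use tightness of $\nu_n=\sT(\mu_n)$, the three-term decomposition of $\nu(\varphi)-\nu(P_t^\mu\varphi)$ via invariance and the strong Feller property, and locally uniform convergence of $P_t^{\mu_n}\varphi$ to $P_t^\mu\varphi$ on $B_L$, obtained from Corollary~\ref{Lp} together with $(\mathbf H_2)$ and Lemma~\ref{uniform-compact-containment}. The limit is then identified by uniqueness of the frozen invariant measure. Your additional remarks only spell out points the paper leaves implicit: the subsequence argument for the full sequence, the use of bounded Lipschitz test functions so that the H\"older-dual diffusion term is available, and the uniformity in $n$ of the local Krylov constants coming from the convergence in $(\mathbf H_2)$.
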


\begin{proof}
Let $
\mu_n,\mu\in \sP_V^{M_0}(\mathbb{R}^d)$ with
$
\mu_n\Rightarrow\mu,
$
and set
$
\nu_n:=\sT(\mu_n).
$
By Lemma \ref{tightness},
\(
\nu_n(V)\leq M_0.
\)
Since $V$ has compact level sets, the family
$
\{\nu_n:n\geq1\}
$
is tight.
  Passing to a subsequence, assume that
\(
\nu_n\Rightarrow\nu.
\)
It remains to show that
\(\nu=\mathcal T(\mu)\).

Fix $t>0$ and
$
\varphi\in C_c^\infty(\mathbb R^d).
$
Since $\nu_n$ is invariant for the frozen equation with parameter
$\mu_n$,
\begin{equation}\label{invariance-frozen-n}
\nu_n(\varphi)
=
\nu_n(P_t^{\mu_n}\varphi).
\end{equation}
We first prove that, for every $L>0$,
\begin{equation}\label{local-semigroup-convergence-Lp}
\lim_{n\to\infty}
\sup_{x\in B_L}
\left|
P_t^{\mu_n}\varphi(x)
-
P_t^\mu\varphi(x)
\right|
=0.
\end{equation}
We only need to consider case (i) in $(\mathbf H_0)$.
Fix $R>L$. For $x\in B_L$, let $X^{n}(x)$ and $X^\mu(x)$ be the solutions
of the frozen equations
\begin{align*}
	\dif X_s^{n}
	&=
	b(X_s^{n},\mu_n)\dif s
	+
	\sigma(X_s^{n},\mu_n)\dif W_s,
	\qquad
	X_0^{n}=x,
	\\
	\dif X_s^\mu
	&=
	b(X_s^\mu,\mu)\dif s
	+
	\sigma(X_s^\mu,\mu)\dif W_s,
	\qquad
	X_0=x.
\end{align*}
Set
\[
\tau_R^{n,x}
:=
\inf\{s\geq0:|X_s^{n}(x)|\geq R\},
\qquad
\tau_R^x
:=
\inf\{s\geq0:|X_s^\mu(x)|\geq R\}.
\]
Applying the   stability estimate  Corollary~\ref{Lp} to the two frozen equations starting
from the same point $x$, we obtain
\begin{align}\label{local-stability-case-i}
\sup_{x\in B_L}
\left|
P_t^{\mu_n}\varphi(x)-P_t^\mu\varphi(x)
\right|&\leq
C_{t,L,R,\varphi}
\|b(\cdot,\mu_n)-b(\cdot,\mu)\|_{L^p(B_R)}
\nonumber\\
&\quad+
C_{t,L,R,\varphi}
\|a(\cdot,\mu_n)-a(\cdot,\mu)\|_{L^\infty(B_R)}
\nonumber\\
&\quad+
2\|\varphi\|_\infty
\sup_{x\in B_L}
\left[
\mathbb P(\tau_R^{n,x}\leq t)
+
\mathbb P(\tau_R^x\leq t)
\right].
\end{align}
In fact, since the frozen coefficients
are autonomous, for any sufficiently large finite $q$,
\[
\begin{aligned}
&
\|b(\cdot,\mu_n)-b(\cdot,\mu)\|
_{\mathbb L^q_p((0,t)\times B_R)}=
t^{1/q}
\|b(\cdot,\mu_n)-b(\cdot,\mu)\|_{L^p(B_R)}.
\end{aligned}
\]
Because $p>d$, one can choose $q>2$ sufficiently large such that
\[
\frac{d}{p}+\frac{2}{q}<1.
\]
The local Krylov argument in Corollary \ref{Lp} therefore gives the
first term on the right-hand side of
\eqref{local-stability-case-i}.
Assumption $(\mathbf H_2)$ implies that, for every fixed
$R>L$,
\begin{align*}
&
\|b(\cdot,\mu_n)-b(\cdot,\mu)\|_{L^p(B_R)}+
\|a(\cdot,\mu_n)-a(\cdot,\mu)\|_{L^\infty(B_R)}
\longrightarrow0.
\end{align*}
Moreover, Lemma \ref{uniform-compact-containment} yields
\begin{align*}
\lim_{R\to\infty}
\sup_{\mu\in \sP_V^{M_0}(\mathbb{R}^d)}
\sup_{x\in B_L}
\mathbb P
\left(
\sup_{0\leq s\leq t}|X_s^{\mu}(x)|\geq R
\right)
=0.
\end{align*}
Taking first $n\to\infty$ and then $R\to\infty$ in
\eqref{local-stability-case-i}, proves
\eqref{local-semigroup-convergence-Lp}.

We now return to the invariant measures. From
\eqref{invariance-frozen-n}, we have
\begin{align}\label{weak-continuity-decomposition-Lp}
\left|
\nu(\varphi)-\nu(P_t^\mu\varphi)
\right|
\leq
\left|
\nu(\varphi)-\nu_n(\varphi)
\right|
+
\left|
\nu_n\left(
P_t^{\mu_n}\varphi-P_t^\mu\varphi
\right)
\right|
+
\left|
\nu_n(P_t^\mu\varphi)-\nu(P_t^\mu\varphi)
\right|.
\end{align}
The first and third terms on the right-hand side tend to zero because
\(
\nu_n\Rightarrow\nu
\) and the strong Feller property of the frozen semigroup.
For the middle term, for every $L>0$,
\begin{align}\label{middle-term-Lp}
\left|
\nu_n\left(
P_t^{\mu_n}\varphi-P_t^\mu\varphi
\right)
\right|\leq
\sup_{x\in B_L}
\left|
P_t^{\mu_n}\varphi(x)
-
P_t^\mu\varphi(x)
\right|
+
2\|\varphi\|_\infty\nu_n(B_L^c).
\end{align}
For fixed $L$, the first term on the right-hand side tends to zero by
\eqref{local-semigroup-convergence-Lp}. On the other hand,
\[
\nu_n(B_L^c)
\leq
\frac{\nu_n(V)}
{\inf_{|x|\geq L}V(x)}
\leq
\frac{M_0}
{\inf_{|x|\geq L}V(x)}.
\]
Since $V$ has compact level sets,
\[
\inf_{|x|\geq L}V(x)\longrightarrow\infty
\qquad\text{as }L\to\infty.
\]
Taking first $n\to\infty$ and then $L\to\infty$ in
\eqref{middle-term-Lp}, we obtain
\begin{equation*}
\nu_n\left(
P_t^{\mu_n}\varphi-P_t^\mu\varphi
\right)
\longrightarrow0.
\end{equation*}
Letting $n\to\infty$ in
\eqref{weak-continuity-decomposition-Lp}, we conclude that
\[
\nu(\varphi)
=
\nu(P_t^\mu\varphi),
\qquad
\varphi\in C_c^\infty(\mathbb R^d).
\]
Thus $\nu$ is an invariant  measure for the frozen equation
with parameter $\mu$.
By uniqueness of the invariant  measure of the frozen
equation,
\[
\nu=\sT(\mu).
\]
The proof is finished.
\end{proof}

\subsection{Proof of the existence theorems}
With the properties of $\mathcal T$ established, we divide the proof of the existence results into two parts: first, the general existence under one-level Lyapunov trapping, and second, the extension to systems with singular drifts via the Zvonkin transform.

\subsubsection{Existence under one-level Lyapunov trapping}

We give:

\begin{proof}[{\bf Proof of Theorem \ref{main1}}]
By Lemma \ref{weak-compactness}, the set $\sP_V^{M_0}(\mathbb{R}^d)$ is a compact convex
subset of $\mathscr P(\mathbb R^d)$ endowed with the weak topology.
By Lemma \ref{tightness},
\[
\sT(\sP_V^{M_0}(\mathbb{R}^d))\subset \sP_V^{M_0}(\mathbb{R}^d),
\]
and Proposition \ref{weak-continuity-T}  shows that
\(\mathcal T\) is weakly continuous.
Therefore, by the Schauder--Tychonoff fixed-point theorem,
there exists
$
\mu_\ast\in \sP_V^{M_0}(\mathbb{R}^d)
$
such that
\[
\sT(\mu_\ast)=\mu_\ast.
\]
By the definition of $\sT$, the measure $\mu_\ast$ is invariant for the
frozen equation
\[
	\dif X_t
	=
	b(X_t,\mu_\ast)\dif t
	+
	\sigma(X_t,\mu_\ast)\dif W_t.
	\]
If
$
\mathcal L_{X_0}=\mu_\ast,
$
then
\[
\mathcal L_{X_t}=\mu_\ast,
\qquad t\geq0.
\]
Consequently,
\[
b(X_t,\mu_\ast)
=
b(X_t,\mathcal L_{X_t}),
\qquad
\sigma(X_t,\mu_\ast)
=
\sigma(X_t,\mathcal L_{X_t}),
\]
and the same process is a stationary weak solution of the
McKean--Vlasov equation \eqref{sde0}. Hence $\mu_\ast$ is an invariant
 measure of \eqref{sde0}.
\end{proof}

The following proof verifies that the explicit coefficient-level criterion $(\tilde{\mathbf H}_1)$ implies the abstract Lyapunov condition $(\mathbf H_1)$.
\begin{proof}[{\bf Proof of Corollary \ref{cor1}}]
	We show that $(\mathbf{\tilde H_1})$ implies $(\mathbf H_1)$ with the choice
	\[
	U(x)=|x|^{2+r_3-r_1},
	\qquad
	V(x)=1+|x|^{r_3}.
	\]
	Set
	\[
	q:=2+r_3-r_1.
	\]
	Since \(r_3\geq r_1\), we have \(q\geq2\), and hence
	\(U\in C^2(\mathbb R^d)\).
	
	For the frozen generator \(\mathscr L_\mu\), a direct computation gives
	\[
	\begin{aligned}
		\mathscr L_\mu U(x)
		&=
		q|x|^{q-2}\langle x,b(x,\mu)\rangle
		+
		\frac q2 |x|^{q-2}\|\sigma(x,\mu)\|_{\mathrm{HS}}^2
		\\
		&\quad+
		\frac{q(q-2)}2
		|x|^{q-4}|\sigma(x,\mu)^*x|^2
		\\
		&\leq
		\frac q2 |x|^{q-2}
		\left[
		2\langle x,b(x,\mu)\rangle
		+
		(q-1)\|\sigma(x,\mu)\|_{\mathrm{HS}}^2
		\right].
	\end{aligned}
	\]
	Since
	\[
	q-1=1+r_3-r_1,
	\]
	the assumption $(\mathbf{\widetilde H_1})$ yields
	\[
	\begin{aligned}
		\mathscr L_\mu U(x)
		&\leq
		\frac q2 |x|^{q-2}
		\left[
		-c_1|x|^{r_1}
		+
		c_2|x|^{r_2}\|\mu\|_{r_3}^{r_4}
		+
		c_3
		\right]
		\\
		&=
		-\frac q2c_1|x|^{r_3}
		+
		\frac q2c_2
		|x|^{r_3+r_2-r_1}\|\mu\|_{r_3}^{r_4}
		+
		\frac q2c_3|x|^{r_3-r_1}.
	\end{aligned}
	\]
	Define
	\[
	\eta:=\frac{r_3+r_2-r_1}{r_3},
	\qquad
	\beta:=\frac{r_4}{r_3}.
	\]
	Since $0\leq r_2<r_1$ and $r_3\ge r_1$, we have $0\leq \eta<1$.  Moreover,
	\[
	|x|^{r_3+r_2-r_1}\leq V(x)^\eta,
	\qquad
	|x|^{r_3-r_1}\leq V(x)^\eta,
	\]
	and
	\[
	\|\mu\|_{r_3}^{r_4}
	=
	\left(
	\int_{\mathbb R^d}|x|^{r_3}\mu(\dif x)
	\right)^{r_4/r_3}
	\leq
	\mu(V)^\beta .
	\]
	Therefore
	\[
	\mathscr L_\mu U(x)
	\leq
	-A_0|x|^{r_3}
	+
	A_1V(x)^\eta\mu(V)^\beta
	+
	A_2V(x)^\eta,
	\]
	where
	\[
	A_0:=\frac q2c_1,
	\qquad
	A_1:=\frac q2c_2,
	\qquad
	A_2:=\frac q2c_3.
	\]
	Since \(|x|^{r_3}=V(x)-1\), we get
	\[
	\mathscr L_\mu U(x)
	\leq
	-A_0V(x)
	+
	A_1V(x)^\eta\mu(V)^\beta
	+
	A_2V(x)^\eta
	+
	A_0 .
	\]
	Thus the Lyapunov drift condition \eqref{HL-two-term} holds with
$$
	\kappa_1=A_0,\quad \kappa_2=A_1,\quad\vartheta_1=\eta,\quad\vartheta_2=\beta, \quad \kappa_3=A_2/A_1,\quad\kappa_4=A_0.
$$
The subcritical, critical, and supercritical balance conditions in $(\tilde{\mathbf H}_1)$ correspond exactly to
	\begin{align*}
		{\vartheta_1}+{\vartheta_2}<1
		\quad&\Longleftrightarrow\quad
		r_2+r_4<r_1,\\
		{\vartheta_1}+{\vartheta_2}=1, \kappa_2<\kappa_1
		\quad&\Longleftrightarrow\quad
		r_2+r_4=r_1, c_2<c_1,\\
		{\vartheta_1}+{\vartheta_2}>1
		\quad&\Longleftrightarrow\quad
		r_2+r_4>r_1.
	\end{align*}
	It remains to verify  when $r_2+r_4>r_1$, the trapping condition \eqref{trapping-2} holds,  i.e.
	\begin{equation}\label{A0leq}
		A_0\leq (1-\vartheta^{-1})A_0M_0-A_2M_0^\eta,
	\end{equation}
	where
	$$
	M_0
	=
	\left(
	\frac{c_1}{\vartheta c_2}
	\right)^{\frac1{\vartheta-1}}.
	$$
 Dividing by \(q/2\) in \eqref{A0leq}, we only need to verify that
	\begin{equation*}
	c_1+c_3M_0^{1+(r_2-r_1)/r_3}
	<
	\frac{\vartheta-1}{\vartheta}c_1M_0,
	\end{equation*}
	which is \eqref{trapping-r}.
Hence $(\mathbf H_1)$ holds, and Theorem \ref{main1} applies.
\end{proof}

\subsubsection{Extension to singular drifts}

We now prove the existence result to McKean--Vlasov systems with an additional singular drift.

\begin{proof}[{\bf Proof of Corollary \ref{sing}}]
We only indicate the main steps, since the argument is the same as
that in the proof of \cite[Theorem 2.5]{Zhang2023}; see also
\cite[Lemma 2.5 and the proof of Theorem 2.1]{Wang2023b} for the
corresponding Zvonkin estimates and the preservation of the
coercive Lyapunov structure.
For a frozen measure \(\mu\in\mathscr P_{r_3}\), consider the elliptic
equation
\begin{equation}\label{sing-Zvonkin-equation}
\lambda u_\mu
-
\operatorname{tr}\big(a(\cdot,\mu)\cdot\nabla^2_xu_\mu\big)
-
b_1(\cdot,\mu)\cdot\nabla_x u_\mu
=
b_1(\cdot,\mu).
\end{equation}
By the global \(L^p\)-theory for uniformly elliptic equations and
\eqref{sing-H1-2}, for all sufficiently large \(\lambda\),
\eqref{sing-Zvonkin-equation} has a solution
\(u_\mu\in W^{2,p}(\mathbb R^d)\), and
\begin{equation*}
\sup_{\mu\in\mathscr P_{r_3}}
\left(
\|u_\mu\|_\infty+\|\nabla u_\mu\|_\infty
\right)
\longrightarrow0
\qquad\text{as }\lambda\to\infty.
\end{equation*}
Hence, for large \(\lambda\),
\[
\Phi_\mu(x):=x+u_\mu(x)
\]
is a  \(C^1\)-diffeomorphism, uniformly in \(\mu\).
For the frozen equation (\ref{fr3}), set
\[
Y_t^\mu:=\Phi_\mu(X_t^\mu).
\]
The generalized It\^o formula and
\eqref{sing-Zvonkin-equation} give
\begin{align*}
\dif Y_t^\mu
&=
\tilde b_\mu(Y_t^\mu)\dif t
+
\tilde\sigma_\mu(Y_t^\mu)\dif W_t,
\end{align*}
where
$$
\tilde b_\mu
:=
\big[
(I+\nabla_x u_\mu)\cdot b_0(\cdot,\mu)
+\lambda u_\mu
\big]\circ\Phi_\mu^{-1},
\quad
\tilde\sigma_\mu
:=
\big[
(I+\nabla_x u_\mu)\cdot\sigma(\cdot,\mu)
\big]\circ\Phi_\mu^{-1}.
$$
Thus the singular drift \(b_1\) is removed from the transformed
equation.

Let
\[
q:=2+r_3-r_1.
\]
Using
\eqref{sing-b0-growth}, the uniform Lipschitz estimates for
\(\Phi_\mu\), and Young's inequality, one obtains, exactly as in
\cite[Lemma 2.10]{Zhang2023},
\begin{align}\label{sing-transformed-Lyapunov}
\tilde{\mathscr L}_\mu |y|^q
\leq
-\tilde c_1|y|^{r_3}
+
\tilde c_2
|y|^{r_3+r_2-r_1}\|\mu\|_{r_3}^{r_4}
+
\tilde c_3,
\end{align}
where the constants are independent of \(\mu\). By taking
\(\lambda\) sufficiently large, the perturbation of the leading
constants can be made small. Consequently,
\[
r_2+r_4<r_1,
\]
or, in the critical case,
\[
r_2+r_4=r_1,\qquad
\tilde c_2<\tilde c_1.
\]
It follows from the standard invariant measure estimate associated
with \eqref{sing-transformed-Lyapunov} that there exists \(M>0\) such
that the frozen invariant-measure map
\(
\mathcal T:\mu\longmapsto\nu_\mu
\)
maps \(\mathscr P_{r_3}^M\) into itself.
Finally, the weak continuity of \(\mathcal T\) follows from
\eqref{sing-H2}, the local continuity of \(b_0\)
and \(a\), and the usual localized Zvonkin--Krylov stability
argument. More precisely, one applies the same Zvonkin map to two
frozen equations, stops the processes before leaving a large ball,
and controls the resulting coefficient errors by the Krylov
estimate and the stochastic Gronwall inequality; this is precisely
the argument in \cite[Lemma 2.11]{Zhang2023}. Therefore
\(\mathcal T\) is weakly continuous on the compact convex set
\(\mathscr P_{r_3}^M\).

The Schauder--Tychonoff fixed-point theorem yields a measure
\(\mu_\ast\in\mathscr P_{r_3}^M\) satisfying
\[
\mathcal T(\mu_\ast)=\mu_\ast.
\]
Since \(\Phi_{\mu_\ast}\) is a bijection, the invariant probability measure
of the transformed frozen equation can be pushed forward by
\(\Phi_{\mu_\ast}^{-1}\). The resulting measure is invariant for the original
frozen equation with parameter \(\mu_\ast\). Since \(\mu_\ast\) is a fixed
point of the corresponding frozen invariant measure map, it is an invariant
probability measure of the original McKean--Vlasov SDE \eqref{sde02}. The proof is finished.
\end{proof}

\section{Global and local anchored uniqueness  principle}

In this section, we prove anchored uniqueness and quantitative convergence to equilibrium. The proof is based on the
stability estimate of Section 3, applied with the frozen equation at
$\mu_\ast$ as the reference equation.  A key feature of the argument is its flexibility with respect to the choice of test-function class \(\mathfrak F\). Since the stability estimates in Section~3 provide derivative bounds for the reference semigroup in different topologies, the resulting smallness condition adapts to the specific structure of the law dependence.

\subsection{A global convolution-type Gronwall inequality}

The following uniform-in-time convolution Gronwall lemma provides a unified way to turn the convolution inequality arising from the stability estimates into explicit decay rates, and  its flexibility in handling both exponential and polynomial decays is essential for the quantitative ergodicity results.

\begin{lemma}\label{gron}
Let $\sD:\mathbb R_+\to\mathbb R_+$ be locally bounded and satisfy
\begin{align}\label{geq}
\sD(t)
\leq
A\ell(t)
+
\int_0^t\Gamma(t-s)\sD(s)\dif s,
\qquad t\geq0,
\end{align}
where $A\geq0$, $\ell:\mathbb R_+\to\mathbb R_+$ is locally bounded with
$\ell(t)\to0$ as $t\to\infty$, and $\Gamma(t)\geq0$ is locally integrable. If
\begin{align}\label{gs}
\Lambda_0:=\int_0^\infty\Gamma(s)\dif s<1,
\end{align}
 then
$$
\sD(t)\to0,
\qquad t\to\infty.
$$
Moreover,  the following quantitative estimates hold.
\begin{enumerate}[(i)]

\item
If $\ell(t)\leq \e^{-\lambda t}$
for some $\lambda>0$, and if there exists $\theta\in(0,\lambda]$ such that
\begin{align}\label{es}
\Lambda_\theta
:=
\int_0^\infty \e^{\theta s}\Gamma(s)\dif s<1,
\end{align}
then
\begin{align}\label{eg}
\sD(t)
\leq
\frac{A }{1-\Lambda_\theta}\e^{-\theta t},
\qquad t\geq0.
\end{align}

\item
If
$
\ell(t)\leq  (1+t)^{-\gamma}
$
for some $ \gamma>0$, and if
$$
\Lambda_\gamma:=\sup_{t\geq0}
\int_0^t
\left(\frac{1+t}{1+s}\right)^\gamma
\Gamma(t-s)\dif s
<1,
$$
then
\begin{align}\label{pg}
\sD(t)\leq \frac{A }{1-\Lambda_\gamma}(1+t)^{-\gamma},
\qquad t\geq0.
\end{align}
More generally, the same polynomial rate holds if
\begin{align}\label{gm}
M_\gamma
:=
\int_0^\infty(1+s)^\gamma\Gamma(s)\dif s
<\infty.
\end{align}
More precisely,
\begin{align}\label{gp}
\mathscr D(t)
\leq
A
\left(
1+\int_0^\infty(1+s)^\gamma \sR(s)\dif s
\right)
(1+t)^{-\gamma},
\qquad t\geq0,
\end{align}
where $\sR(t)$ is
the resolvent kernel given by
$
\sR
:=
\sum_{n=1}^\infty\Gamma^{\ast n},
$
and
\begin{align}\label{gR}
\int_0^\infty(1+s)^\gamma \sR(s)\dif s
\leq
M_\gamma
\sum_{n=1}^\infty
n^{\gamma\vee1}\Lambda_0^{\,n-1}
<\infty.
\end{align}
\end{enumerate}
\end{lemma}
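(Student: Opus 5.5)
The plan is to treat \eqref{geq} as a renewal-type inequality and handle each claim by a different device: a $\limsup$ argument for plain convergence, a weighted supremum for the rates (i) and the first form of (ii), and iteration via the resolvent for the last polynomial claim.

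\emph{Convergence.} First I show that $\sD$ is bounded on $\mathbb R_+$. Fix $T$ and set $S_T:=\sup_{t\le T}\sD(t)$, which is finite because $\sD$ is locally bounded. Taking the supremum in \eqref{geq} over $t\le T$ gives $S_T\le A\sup\ell+\Lambda_0S_T$. Since $\Lambda_0<1$, this yields $S_T\le A\sup\ell/(1-\Lambda_0)$ uniformly in $T$. Next let $L:=\limsup_{t\to\infty}\sD(t)<\infty$ and fix $\eps>0$. Choose $T_0$ with $\sD(s)\le L+\eps$ for $s\ge T_0$, and split the convolution at $s=T_0$. The piece over $[0,T_0]$ is at most $\sup\sD\int_{t-T_0}^t\Gamma$, which tends to $0$ as $t\to\infty$ because $\Gamma\in L^1$. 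The remaining piece is at most $\Lambda_0(L+\eps)$. Since also $\ell(t)\to0$, we get $L\le\Lambda_0(L+\eps)$, and letting $\eps\to0$ gives $L=0$.

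\emph{Rate (i).} Set $E(t):=\e^{\theta t}\sD(t)$ and multiply \eqref{geq} by $\e^{\theta t}$. Writing $\e^{\theta t}=\e^{\theta(t-s)}\e^{\theta s}$, and using $\e^{\theta t}\ell(t)\le1$ (because $\theta\le\lambda$), we obtain
\[
E(t)\le A+\int_0^t \e^{\theta(t-s)}\Gamma(t-s)E(s)\,\dif s .
\]
Taking suprema over $[0,T]$, where $E$ is finite by local boundedness, gives $\sup_{[0,T]}E\le A+\Lambda_\theta\sup_{[0,T]}E$. Hence $\sup_{[0,T]}E\le A/(1-\Lambda_\theta)$ for every $T$, which is \eqref{eg}.

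\emph{Rate (ii).} For the first form I use the same device with $E(t):=(1+t)^\gamma\sD(t)$. The weight produced inside the convolution is exactly $((1+t)/(1+s))^\gamma\Gamma(t-s)$, so the supremum argument gives \eqref{pg}.

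For the resolvent version I iterate \eqref{geq}. Since $\Lambda_0<1$, the series $\sR=\sum_{n\ge1}\Gamma^{\ast n}$ converges in $L^1$ with $\|\sR\|_1\le\Lambda_0/(1-\Lambda_0)$. The inequality then yields $\sD\le A\ell+A\,\sR\ast\ell$; the remainder term $\Gamma^{\ast n}\ast\sD$ tends to $0$ on bounded intervals because $\sD$ is bounded. It remains to bound $\sR\ast\ell$. Using $\ell(t-s)\le(1+t-s)^{-\gamma}$ together with the elementary inequality $(1+t)^\gamma\le 2^\gamma\big[(1+t-s)^\gamma+(1+s)^\gamma\big]$, I get
\[
(1+t)^\gamma(\sR\ast\ell)(t)\lesssim\int(1+s)^\gamma\sR(s)\,\dif s .
\]
(The constant from this splitting would need to be absorbed; the clean constant stated in \eqref{gp} may require a sharper splitting, and I would not insist on it.)

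\emph{Main obstacle.} The key remaining step is \eqref{gR}. Here I would use the subadditivity $(1+s_1+\dots+s_n)^\gamma\le n^{(\gamma-1)^+}\sum_i(1+s_i)^\gamma$ for $\gamma\ge1$, and $\le\sum_i(1+s_i)^\gamma$ for $\gamma<1$. Applying it inside the $n$-fold convolution gives
\[
\int(1+s)^\gamma\Gamma^{\ast n}(s)\,\dif s\le n^{\gamma\vee1}M_\gamma\Lambda_0^{\,n-1},
\]
and summing over $n$ converges because $\Lambda_0<1$.
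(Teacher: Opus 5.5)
Your arguments for (i), for the first form of (ii), and for the moment bound \eqref{gR} (the power-mean/subadditivity inequality applied inside the $n$-fold convolution) are the paper's arguments. For plain convergence you take a different route. The paper iterates \eqref{geq} to get $\sD\le A\ell+A\,\sR\ast\ell$ and then uses $\sR\in L^1$ and $\ell\to0$ to conclude $\sR\ast\ell\to0$. You instead first get the global bound $\sup\sD\le A\sup\ell/(1-\Lambda_0)$, then run a $\limsup$ argument that splits the convolution at $T_0$ and uses $\int_{t-T_0}^t\Gamma\to0$. Both are correct. Yours avoids the resolvent at that stage and yields an explicit uniform bound for free. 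The paper's produces the representation $\sD\le A\ell+A\,\sR\ast\ell$, which it reuses in (ii); you derive that representation there anyway.

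The one place you fall short of the statement as written is the constant in \eqref{gp}, as you suspected. With your additive splitting $(1+t)^\gamma\le2^\gamma[(1+t-s)^\gamma+(1+s)^\gamma]$ you only get \eqref{gp} with $\int_0^\infty(1+s)^\gamma\sR(s)\,\dif s$ multiplied by an extra factor of order $2^{\gamma}$. The missing ingredient is a multiplicative rather than additive inequality: for $0\le s\le t$ one has $(1+s)(1+t-s)=1+t+s(t-s)\ge1+t$. Hence $(1+t)^\gamma\ell(t-s)\le(1+t)^\gamma(1+t-s)^{-\gamma}\le(1+s)^\gamma$, and so $(1+t)^\gamma(\sR\ast\ell)(t)\le\int_0^\infty(1+s)^\gamma\sR(s)\,\dif s$ with no loss. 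Inserting this into $\sD\le A\ell+A\,\sR\ast\ell$ gives \eqref{gp} exactly; this is what the paper does.
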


\begin{proof}
For the first assertion, by (\ref{gs}), we have
\[
\sum_{n=1}^\infty
\|\Gamma^{\ast n}\|_{L^1(\mR_+)}
=
\sum_{n=1}^\infty\Lambda_0^n
=
\frac{\Lambda_0}{1-\Lambda_0}.
\]
Thus
the resolvent series
$$
\sR:=\sum_{n=1}^\infty \Gamma^{\ast n}
$$
converges in $L^1(\mathbb R_+)$.
Iterating \eqref{geq} yields
\begin{align}\label{res}
\sD(t)\leq A\ell(t)+A\int_0^t\sR(t-s)\ell(s)\dif s.
\end{align}
Since $\ell(t)\to0$ and $\sR\in L^1(\mR_+)$, we have $(\sR\ast\ell)(t)\to0$ as $t\to\infty$. Hence
$\sD(t)\to0$.

For the exponential estimate, define
$$
\sG(t):=\sup_{0\leq s\leq t}\e^{\theta s}\sD(s).
$$
Multiplying \eqref{geq} by $\e^{\theta t}$, we get
\begin{align*}
\e^{\theta t}\sD(t)
&\leq
A \e^{-(\lambda-\theta)t}
+
\int_0^t
\e^{\theta(t-s)}\Gamma(t-s)\e^{\theta s}\sD(s)\dif s\\
&\leq
A
+
\sG(t)\int_0^t \e^{\theta r}\Gamma(r)\dif r
\leq
A +\Lambda_\theta \sG(t).
\end{align*}
Taking the supremum over $0\leq s\leq t$ gives
$$
\sG(t)\leq A +\Lambda_\theta \sG(t).
$$
Since $\Lambda_\theta<1$,
$$
\sG(t)\leq\frac{A }{1-\Lambda_\theta}.
$$
This proves \eqref{eg}.

For the first polynomial estimate, define
$$
\sG_\gamma(t):=\sup_{0\leq s\leq t}(1+s)^\gamma \sD(s).
$$
Multiplying \eqref{geq} by $(1+t)^\gamma$, we obtain
\begin{align*}
(1+t)^\gamma \sD(t)
&\leq
A
+
\int_0^t
\left(\frac{1+t}{1+s}\right)^\gamma
\Gamma(t-s)(1+s)^\gamma \sD(s)\dif s\\
&
\leq
A +\Lambda_\gamma \sG_\gamma(t).
\end{align*}
Taking the supremum over $0\leq s\leq t$ gives
$$
\sG_\gamma(t)\leq A +\Lambda_\gamma \sG_\gamma(t).
$$
Since $\Lambda_\gamma<1$, we get
$$
\sG_\gamma(t)\leq \frac{A }{1-\Lambda_\gamma},
$$
which proves \eqref{pg}.

Finally, assume \eqref{gm}. We first show that the resolvent
kernel has a finite \(\gamma\)-moment. For \(n\geq1\), using
\[
\left(\sum_{i=1}^na_i\right)^\gamma
\leq
n^{(\gamma-1)_+}\sum_{i=1}^na_i^\gamma,
\qquad a_i\geq0,
\]
we obtain
\[
\begin{aligned}
\int_0^\infty
(1+s)^\gamma\Gamma^{\ast n}(s)\dif s
&\quad\leq
n^{(\gamma-1)_+}
\sum_{i=1}^n
\int_{\mathbb R_+^n}
(1+s_i)^\gamma
\prod_{j=1}^n\Gamma(s_j)
\dif s_1\cdots\dif s_n
\\
&\quad=
n^{\gamma\vee1}
M_\gamma\Lambda_0^{\,n-1}.
\end{aligned}
\]
Therefore,
\[
\begin{aligned}
\int_0^\infty(1+s)^\gamma \sR(s)\dif s
&=
\sum_{n=1}^\infty
\int_0^\infty
(1+s)^\gamma\Gamma^{\ast n}(s)\dif s
\\
&\leq
M_\gamma
\sum_{n=1}^\infty
n^{\gamma\vee1}\Lambda_0^{\,n-1}
<\infty,
\end{aligned}
\]
which proves \eqref{gR}. Since
\[
1+t
\leq
(1+s)(1+t-s),
\qquad 0\leq s\leq t,
\]
we have
\[
(1+t-s)^{-\gamma}
\leq
(1+s)^\gamma(1+t)^{-\gamma}.
\]
Using \eqref{res}, we conclude that
\begin{align*}
\mathscr D(t)
&\leq
A (1+t)^{-\gamma}
+
A \int_0^t\sR(s)(1+t-s)^{-\gamma}\dif s
\\
&\leq
A (1+t)^{-\gamma}
\left(
1+\int_0^\infty(1+s)^\gamma \sR(s)\dif s
\right).
\end{align*}
This proves \eqref{gp}.
\end{proof}

\begin{remark}
The lemma shows that the decay rate of \(\mathscr D(t)\) is inherited from that of the forcing term \(\ell(t)\), provided the perturbation kernel \(\Gamma\) is small in the appropriate weighted \(L^1\)-sense.
	In the exponential case, the admissible rate is any
	$
	0<\theta\leq\lambda
	$
	such that (\ref{es}) holds.
	In the polynomial case, the condition \(\Lambda_\gamma<1\) gives a direct
	weighted contraction and the explicit constant
	\((1-\Lambda_\gamma)^{-1}\) in (\ref{pg}). Since
	\[
	\Lambda_\gamma
	\leq
	\int_0^\infty(1+s)^\gamma\Gamma(s)\dif s=M_\gamma,
	\]
	the condition \(M_\gamma<1\) is sufficient for
	\(\Lambda_\gamma<1\). On the other
	hand, under the standing assumption \(\Lambda_0<1\), the weaker requirement
	\(M_\gamma<\infty\) still preserves the rate \((1+t)^{-\gamma}\), no smallness of the
	weighted moment is required. The price for this more flexible criterion is
	that the
	corresponding constant is expressed through the weighted moment of the
	resolvent kernel \(\sR(t)\).
\end{remark}
\subsection{Proof of the uniqueness and ergodicity theorems}

Let $\mu_\ast$ be an invariant   measure of
the McKean--Vlasov SDE \eqref{sde0}, and recall that   $P_t^\ast$ is the semigroup of the frozen reference equation (\ref{ref-mustar}).
We now give:

\begin{proof}[{\bf Proof of Theorem \ref{main2}}]
Let \(\xi\) be an admissible initial random variable, set
\(\nu_0:=\mathcal L_\xi\), and write \(X_t=X_t(\xi)\).
For a test function
$f\in\mathfrak F$, write
\begin{align}\label{anchored-decomposition}
	{\mathcal L_{X_t}}(f)-\mu_\ast(f)
	=
	\left[
	\nu_0P_t^\ast f-\mu_\ast(f)
	\right]
	+
	\left[
	{\mathcal L_{X_t}}(f)-\nu_0P_t^\ast f
	\right].
\end{align}
By the frozen ergodic estimate \eqref{F0}, the first term is controlled by
\begin{align}\label{fi}
	{|\nu_0P_t^\ast f-\mu_\ast(f)|
		\leq C_{\nu_0}
		\ell_\ast(t)\|f\|_{\mathfrak F}
		.}
\end{align}
We now estimate the second term in \eqref{anchored-decomposition} by applying Theorem \ref{thm2-abstract} with the frozen equation
\eqref{ref-mustar} as reference equation and the McKean--Vlasov
equation (\ref{sde0}) as the perturbed equation. Since the  initial distributions are the
same, the initial-distribution term in
Theorem \ref{thm2-abstract} vanishes.  By the  assumption
$({\mathbf U}_{\mathfrak F}^1)$, we get
\begin{equation*}\label{se}
	\begin{split}
		\left|
		{\mathcal L_{X_t}}(f)-\nu_0P_t^\ast f
		\right|
		&\leq
		\|f\|_{\mathfrak F}
		\mathbb E\int_0^t
		\Theta^\ast_{1,\mathfrak F}\big(X_s\big)
		\sK^\ast_{1,\mathfrak F}(t-s)
		|b(X_s,\mathcal L_{X_s})-b(X_s,\mu_\ast)|
		\dif s
		\\
		&\quad+
		\|f\|_{\mathfrak F}
		\mathbb E\int_0^t
		\Theta^\ast_{2,\mathfrak F}\big(X_s\big)
		\sK^\ast_{2,\mathfrak F}(t-s)
		|a(X_s,\mathcal L_{X_s})-a(X_s,\mu_\ast)|
		\dif s .
	\end{split}
\end{equation*}
If the diffusion coefficient is independent of the law, the second integral is
omitted.
Using the coefficient estimates \eqref{HF-b-new}--\eqref{HF-a-new} and the
compatibility bounds \eqref{HF-weight-b}--\eqref{HF-weight-a}, we obtain
\begin{align}\label{second-term-new}
	\left|
	{\mathcal L_{X_t}}(f)-\nu_0P_t^\ast f
	\right|
	\leq
	\|f\|_{\mathfrak F}
	\int_0^t
	\Gamma_{\mathfrak F}(t-s)
	{\bf d}_{\mathfrak F}(\mathcal L_{X_s},\mu_\ast)
	\dif s,
\end{align}
where \(\Gamma_{\mathfrak F}\) is  defined in \eqref{Gamma-F}.
Combining \eqref{fi} and \eqref{second-term-new} and taking the
supremum over $\|f\|_{\mathfrak F}\leq1$ gives the  inequality
\begin{equation}\label{abf}
	{{\bf d}_{\mathfrak F}({\mathcal L_{X_t}},\mu_\ast)
		\leq C_{\nu_0}
		\ell_\ast(t)
		+
		\int_0^t
		\Gamma_{\mathfrak F}(t-s)
		{\bf d}_{\mathfrak F}(\mathcal L_{X_s},\mu_\ast)
		\dif s.}
\end{equation}

To prove uniqueness, let $\nu$ be any invariant   measure of the McKean--Vlasov SDE (\ref{sde0}). Taking
$\nu_0=\nu$, we have
$\mathcal L_{X_t}\equiv\nu$ for any $t\geq0$.
Applying
\eqref{abf} to this stationary law flow gives
$$
{\bf d}_{\mathfrak F}(\nu,\mu_\ast)
\leq C_{\nu_0}
\ell_\ast(t)
+
{\bf d}_{\mathfrak F}(\nu,\mu_\ast)
\int_0^t\Gamma_{\mathfrak F}(s)\dif s.
$$
Letting $t\to\infty$ and using
$$
\ell_\ast(t)\to0,
\qquad
\int_0^\infty\Gamma_{\mathfrak F}(s)\dif s<1,
$$
we obtain
$$
{\bf d}_{\mathfrak F}(\nu,\mu_\ast)=0.
$$
Since ${\bf d}_{\mathfrak F}$ separates probability measures in the
considered class, we conclude that
$$
\nu=\mu_\ast.
$$
Thus uniqueness holds.
For convergence, taking
$$
\sD(t):={\bf d}_{\mathfrak F}(\mathcal L_{X_t},\mu_\ast),
\qquad
A:=C_{\nu_0}
$$  in Lemma \ref{gron} and applying the result to  \eqref{abf}, we obtain convergence to zero \eqref{abstract-convergence}.  The exponential and
polynomial decay estimates \eqref{exp1} and \eqref{pol1} follow
from parts (i) and (ii) of Lemma \ref{gron}, respectively.
This completes the proof.
\end{proof}

Next, we give the proof of the local anchored principle.

\begin{proof}[{\bf Proof of Theorem~\ref{local-anchored}}]
Let \(X_t=X_t(\xi)\) and set
\[
\mathscr D(t):={\bf d}_{\mathfrak F}(\mathcal L_{X_t},\mu_\ast).
\]
According to assumption $({\mathbf U}^{\mathrm{loc}}_{\mathfrak F,r})$, as long as \(\sD(s)<r\), the local coefficient estimates in \(({\mathbf U}_{\mathfrak F}^2)\) are valid.
 By the  anchored
frozen estimate \eqref{local2} and applying the stability estimate of Section~3 with the frozen equation at
\(\mu_\ast\) as the reference dynamics, we obtain
\begin{equation}\label{local-Volterra-inequality}
\sD(t)
\leq  \sD(0)
\ell_\ast(t)
+
\int_0^t
\Gamma_{\mathfrak F,r}(t-s)\sD(s)\,\dif s.
\end{equation}

We first prove local uniqueness. Let \(\nu\) be an invariant
measure such that
\[
{\bf d}_{\mathfrak F}(\nu,\mu_\ast)<r.
\]
The stationary law flow \(\mathcal L_{X_t}\equiv\nu\) remains in
\(B_r^{\mathfrak F}(\mu_\ast)\), so
\eqref{local-Volterra-inequality} holds for every \(t\geq0\). Hence
\[
\begin{aligned}
{\bf d}_{\mathfrak F}(\nu,\mu_\ast)
&\leq \sD(0)
\ell_\ast(t)+
{\bf d}_{\mathfrak F}(\nu,\mu_\ast)
\int_0^t
\Gamma_{\mathfrak F,r}(s)\,\dif s.
\end{aligned}
\]
Letting \(t\to\infty\) and using
\(\ell_\ast(t)\to0\) and
\(\Lambda_{\mathfrak F,r}<1\), we obtain
\[
{\bf d}_{\mathfrak F}(\nu,\mu_\ast)
\leq
\Lambda_{\mathfrak F,r}
{\bf d}_{\mathfrak F}(\nu,\mu_\ast).
\]
Therefore \(\nu=\mu_\ast\).

We next establish the positive invariance of a smaller neighborhood.
Define the first exit time
\[
\tau_r
:=
\inf\{t\geq0:\sD(t)\geq r\}.
\]
For \(T<\tau_r\), set
\[
\sM(T):=\sup_{0\leq t\leq T}\sD(t).
\]
By \eqref{local-Volterra-inequality}, we have
\[
\sM(T)
\leq
L_\ast \sD(0)
+
\Lambda_{\mathfrak F,r}\sM(T).
\]
Consequently,
\begin{equation}\label{local-uniform-bootstrap}
\sM(T)
\leq
\frac{L_\ast \sD(0)}{
1-\Lambda_{\mathfrak F,r}
}.
\end{equation}
If \(\sD(0)\leq r_0\), then
\eqref{local-initial-radius} and
\eqref{local-uniform-bootstrap} imply
\[
\sup_{0\leq t<T}\sD(t)<r.
\]
By the definition of \(\tau_r\), this
excludes a finite exit time. Hence
\(
\tau_r=\infty.
\)

Consequently, \eqref{local-Volterra-inequality} holds for
all $t\geq0$.
Lemma~\ref{gron}, applied with
\[
A=\sD(0),\qquad
\ell=\ell_\ast,\qquad
\Gamma=\Gamma_{\mathfrak F,r},
\]
proves convergence to zero and all the stated quantitative
estimates.
\end{proof}

\section{Non-symmetric granular media dynamics: existence and uniqueness}

In this section, we study the  granular media dynamics (\ref{gran}). In the stability estimates of Section~3, the perturbation kernel describes how the
reference semigroup propagates coefficient errors.  For the uniqueness and ergodicity arguments in Theorem~\ref{main2},   the smallness condition depends
only on the time integral of this kernel. Thus,  it is enough
to identify an admissible kernel and compute its $L^1(\mR_+)$-norm.
We shall derive the explicit kernels for the Ornstein--Uhlenbeck type semigroup that will be used in the granular media analysis in subsection 6.1, and prove the existence and uniqueness of invariant measures in subsection 6.2.

\subsection{Ornstein--Uhlenbeck type reference semigroup and explicit  kernels}

Consider the following Ornstein-Uhlenbeck type SDE:
\begin{equation}\label{ousde}
	\dif X_t
	=
	-\alpha X_t\dif t
	+
	G(X_t)\dif t
	+
	\sigma\dif W_t,
	\qquad X_0=x\in\mathbb R^d,
\end{equation}
where $\alpha>0$ is a constant, $\sigma$ is a  non-degenerate matrix, and
$G:\mathbb R^d\to\mathbb R^d$ is a measurable vector field. Denote by
$P_t^G$ the Markov semigroup associated with \eqref{ousde}. Two distinct regimes will be  considered for the derivative estimates of $P_t^G$: the bounded-test-function   framework, where the kernel inherits short-time singularities from gradient estimates, and the Lipschitz  framework, where the kernel is simply exponential and has no singularity.

Let $p\in(d,\infty]$ and set
$
q:=p/(p-1),
$
with the convention $q=1$ when $p=\infty$.
We first give the following result, which serves as the building block for the perturbed case.
\bl
Let $P_t^0$ be the semigroup of the linear Ornstein--Uhlenbeck equation
\[
\dif Y_t=-\alpha Y_t\dif t+\sigma\dif W_t.
\]
Then we have for every $f\in L^p(\mathbb R^d)$,
\begin{align}\label{0-kp}
	\|\nabla_x P_t^0f\|_\infty
	\leq
	k_{\alpha,\sigma,p}(t)\|f\|_{L^p},
\end{align}
where
\begin{align}\label{OU-kp}
	k_{\alpha,\sigma,p}(t)
	:=
	\mathfrak c_{d,q}
	\|\sigma^{-1}\|
	|\det\sigma|^{-1/p}
	\e^{-\alpha t}
	\left(
	\frac{2\alpha}{1-\e^{-2\alpha t}}
	\right)^{\frac12+\frac{d}{2p}},
	\qquad t>0,
\end{align}
with
\begin{align*}
	\mathfrak c_{d,q}
	:=
	(2\pi)^{-d/2}
	\left(
	\int_{\mathbb R^d}
	|z_1|^q \e^{-q|z|^2/2}\dif z
	\right)^{1/q}.
\end{align*}
Moreover,
\begin{align}\label{Ap}
	A_{\alpha,\sigma,p}
	:=
	\int_0^\infty k_{\alpha,\sigma,p}(t)\dif t=\mathfrak c_{d,q}
	\|\sigma^{-1}\|
	|\det\sigma|^{-1/p}
	(2\alpha)^{-\frac12+\frac{d}{2p}}
	B\left(
	\frac12,\frac12-\frac{d}{2p}
	\right).
\end{align}

In particular, for $p=\infty$, we have
\begin{align*}
	\mathfrak c_{d,1}
	=
	\sqrt{\frac{2}{\pi}},\qquad k_{\alpha,\sigma,\infty}(t)
	=
	2\|\sigma^{-1}\|
	\sqrt{\frac{\alpha}{\pi}}
	\frac{\e^{-\alpha t}}
	{\sqrt{1-\e^{-2\alpha t}}},
	\qquad t>0,
\end{align*}
and
\begin{align}\label{Ainfty}
	A_{\alpha,\sigma,\infty}
	=
	\|\sigma^{-1}\|
	\sqrt{\frac{\pi}{\alpha}}.
\end{align}
\el

\begin{proof}
	By direct computation, we have
	\[
	P_t^0f(x)
	=
	\int_{\mathbb R^d}
	f(\e^{-\alpha t}x+z)g_{Q_t}(z)\dif z,
	\]
	where
	\[
	Q_t
	=
	\frac{1-\e^{-2\alpha t}}{2\alpha}\sigma\sigma^\ast
	\]
	and $g_{Q_t}$ is the centered Gaussian density with covariance $Q_t$.
	For a unit vector $v\in\mathbb R^d$,
	\[
	\<\nabla_xP_t^0f(x),v\>
	=
	-\e^{-\alpha t}
	\int_{\mathbb R^d}
	f(\e^{-\alpha t}x+z)
	\left\langle Q_t^{-1}z,v\right\rangle
	g_{Q_t}(z)\dif z.
	\]
	Therefore,  H\"older's inequality gives
	\[
	|\<\nabla_xP_t^0f(x),v\>|
	\leq
	\e^{-\alpha t}\|f\|_{L^p}
	\left\|
	\left\langle Q_t^{-1}\cdot,v\right\rangle
	g_{Q_t}
	\right\|_{L^q}.
	\]
	After the change of variables $z=Q_t^{1/2}u$, we obtain
	\[
	\left\|
	\left\langle Q_t^{-1}\cdot,v\right\rangle
	g_{Q_t}
	\right\|_{L^q}
	\leq
	\mathfrak c_{d,q}
	|Q_t^{-1/2}v|
	(\det Q_t)^{-1/(2p)}.
	\]
	Since
	\[
	|Q_t^{-1/2}v|
	\leq
	\|\sigma^{-1}\|
	\left(
	\frac{2\alpha}{1-\e^{-2\alpha t}}
	\right)^{1/2}
	\]
	and
	\[
	(\det Q_t)^{-1/(2p)}
	=
	|\det\sigma|^{-1/p}
	\left(
	\frac{2\alpha}{1-\e^{-2\alpha t}}
	\right)^{d/(2p)},
	\]
	we obtain (\ref{0-kp}).

	We next calculate the time integral of $k_{\alpha,\sigma,p}$. Set
	\[
	\delta_p:=\frac12+\frac{d}{2p}.
	\]
	Since \(p>d\), we have \(\frac12-\frac{d}{2p}>0\). Using the substitution $r=\e^{-2\alpha t}$, we get
	\[
	\begin{aligned}
		\int_0^\infty
		\e^{-\alpha t}
		\left(
		\frac{2\alpha}{1-\e^{-2\alpha t}}
		\right)^{\delta_p}
		\dif t
		&=
		(2\alpha)^{\delta_p-1}
		\int_0^1
		r^{-1/2}(1-r)^{-\delta_p}\dif r
		\\
		&=
		(2\alpha)^{-\frac12+\frac{d}{2p}}
		B\left(
		\frac12,\frac12-\frac{d}{2p}
		\right).
	\end{aligned}
	\]
	This proves \eqref{Ap}. The \(p=\infty\) case follows by direct substitution.
\end{proof}

Next, we establish the following result.

\begin{theorem}
	\label{ouk}
	The following assertions hold.
	
	\begin{enumerate}[(i)]
		\item \textbf{Bounded-test-function framework under an $L^p$ drift.}
		\begin{align}\label{gg}
		G\in L^p(\mathbb R^d),
		\qquad
		M:=\|G\|_{L^p}<\infty.
		\end{align}
		Then for every bounded function $f$,
		\begin{align*}
			\|\nabla_x P_t^Gf\|_\infty
			\leq
			K_{\alpha,\sigma,M,p}(t)\|f\|_\infty,
			\qquad t>0,
		\end{align*}
		where for $t>0$,
		\begin{align}\label{KM}
			K_{\alpha,\sigma,M,p}(t)
			:=
			\sum_{n=0}^\infty
			M^n
			\left(
			k_{\alpha,\sigma,p}^{\ast n}
			\ast
			k_{\alpha,\sigma,\infty}
			\right)(t).
		\end{align}
		Here $k^{*n}$ denotes the $n$-fold convolution on $\mathbb R_+$, and
		$k_{\alpha,\sigma,p}(t)$ is given by (\ref{OU-kp}).
		
		If
		\begin{align*}
			M A_{\alpha,\sigma,p}<1,
		\end{align*}
		where $A_{\alpha,\sigma,p}$ is given by (\ref{Ap}), then $K_{\alpha,\sigma,M,p}\in L^1(\mathbb R_+)$ and
		\begin{align}\label{OU-K-Mp-integral}
			\int_0^\infty
			K_{\alpha,\sigma,M,p}(t)\dif t
			=
			\frac{
				\|\sigma^{-1}\|\sqrt{\pi/\alpha}
			}{
				1-MA_{\alpha,\sigma,p}
			}.
		\end{align}
		
		In particular, for $p=\infty$, we have
		\begin{align}\label{OU-recover-infty}
			\int_0^\infty
			K_{\alpha,\sigma,M,\infty}(t)\dif t
			=
			\frac{
				\|\sigma^{-1}\|\sqrt{\pi/\alpha}
			}{
				1-
				M\|\sigma^{-1}\|\sqrt{\pi/\alpha}
			}.
		\end{align}

		\item \textbf{Lipschitz-test-function framework.}
		Assume that $G$ is one-sided Lipschitz, namely there exists $\eta_G\in\mathbb R$
		such that
		\begin{equation*}\label{one-sided-G}
			\langle G(x)-G(y),x-y\rangle
			\leq
			\eta_G|x-y|^2,
			\qquad x,y\in\mathbb R^d.
		\end{equation*}
		Then, for every Lipschitz continuous function $f$,
		\begin{align}\label{OUW}
			|\nabla_x P_t^Gf(x)|
			\leq
			\e^{-(\alpha-\eta_G)t}\operatorname{Lip}(f),
			\qquad t\geq0,\ x\in\mathbb R^d.
		\end{align}
		Thus, in the $W_1$ framework, the drift perturbation kernel is
		\begin{equation*}\label{OW}
			K_{W_1}^{G}(t)
			=
			\e^{-(\alpha-\eta_G)t}.
		\end{equation*}
		If $\eta_G<\alpha$, then
		\begin{equation*}\label{Wk}
			\int_0^\infty K_{W_1}^{G}(t)\dif t
			=
			\frac1{\alpha-\eta_G}.
		\end{equation*}
	\end{enumerate}
\end{theorem}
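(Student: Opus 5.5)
For part (i), I will treat $P_t^G$ as a perturbation of the linear Ornstein--Uhlenbeck semigroup $P_t^0$ and iterate a Duhamel formula. Since $\sigma$ is constant and nondegenerate and $G\in L^p$ with $p>d$, the equation \eqref{ousde} is weakly well posed, and the It\^o--Krylov argument of Proposition~\ref{Lp-drift-TV-stability} applies (with reference semigroup $P^0$ and perturbed drift $-\alpha x+G$). For bounded $f$ this gives the mild formula
\[
P_t^Gf(x)=P_t^0f(x)+\int_0^t P_{t-s}^0\big(G\cdot\nabla_x P_s^Gf\big)(x)\,\dif s .
\]
Differentiating in $x$ and applying \eqref{0-kp} to the function $G\cdot\nabla P_s^Gf\in L^p$, we get
\[
\|\nabla_x P_t^Gf\|_\infty\leq k_{\alpha,\sigma,\infty}(t)\|f\|_\infty+M\int_0^t k_{\alpha,\sigma,p}(t-s)\|\nabla_x P_s^Gf\|_\infty\,\dif s .
\]
Here the first term uses \eqref{0-kp} with $p=\infty$, and the second uses H\"older's inequality, $\|G\cdot\nabla P_s^Gf\|_{L^p}\leq M\|\nabla P_s^Gf\|_\infty$.

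Iterating this Volterra inequality produces exactly the Neumann series \eqref{KM}. For the iteration to be legitimate I need $\|\nabla P_s^Gf\|_\infty$ to be a priori locally finite with an integrable singularity at $s=0$. This a priori bound is the main obstacle. I would handle it by first mollifying $G$ into bounded smooth $G_\eps$, which gives a finite gradient bound of order $s^{-1/2}$ on $[0,T]$, and then carrying out the iteration. The iteration gives a bound independent of $\eps$, because $\|G_\eps\|_{L^p}\leq M$. Finally I pass to the limit $\eps\to0$, using stability of the semigroups, for instance via Corollary~\ref{Lp} together with uniform compact containment. Convergence of the series for every fixed $t$ needs no smallness: on $[0,T]$ the convolution powers satisfy $k_p^{\ast n}(t)\leq C^n t^{n\delta-1}/\Gamma(n\delta)$ with $\delta=\tfrac12-\tfrac d{2p}>0$, and these are summable.

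For the integral identity \eqref{OU-K-Mp-integral}, I use the fact that the $L^1(\mR_+)$-norm of a convolution of nonnegative functions is the product of the norms. This gives
\[
\int_0^\infty K\,\dif t=\sum_{n\ge0}M^nA_{\alpha,\sigma,p}^n\,A_{\alpha,\sigma,\infty}=\frac{A_{\alpha,\sigma,\infty}}{1-MA_{\alpha,\sigma,p}}
\]
by monotone convergence when $MA_{\alpha,\sigma,p}<1$. Substituting $A_{\alpha,\sigma,\infty}=\|\sigma^{-1}\|\sqrt{\pi/\alpha}$ from \eqref{Ainfty} gives \eqref{OU-K-Mp-integral}, and the case $p=\infty$, \eqref{OU-recover-infty}, follows by inserting $A_{\alpha,\sigma,\infty}$ in the denominator as well.

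For part (ii), I use synchronous coupling. I take $X_t(x)$ and $X_t(y)$ driven by the same Brownian motion; the strong solution exists because the drift is continuous and one-sided Lipschitz. If $G$ is only measurable, I would first regularize and then pass to the limit; alternatively, one-sided Lipschitz continuity is part of the hypothesis and I would assume enough continuity. The noise cancels in the difference, so
\[
\tfrac{\dif}{\dif t}|X_t(x)-X_t(y)|^2=2\langle -\alpha(X_t(x)-X_t(y))+G(X_t(x))-G(X_t(y)),X_t(x)-X_t(y)\rangle\leq -2(\alpha-\eta_G)|X_t(x)-X_t(y)|^2 .
\]
Gronwall's inequality then gives $|X_t(x)-X_t(y)|\leq \e^{-(\alpha-\eta_G)t}|x-y|$ pathwise. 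Hence
\[
|P_t^Gf(x)-P_t^Gf(y)|\leq\operatorname{Lip}(f)\,\e^{-(\alpha-\eta_G)t}|x-y|,
\]
which is \eqref{OUW}, with the gradient understood as the Lipschitz constant, or pointwise where $P_t^Gf$ is differentiable. The value of the integral of $K_{W_1}^G$ is immediate when $\eta_G<\alpha$.
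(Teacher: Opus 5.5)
Your proposal is correct and follows the same route as the paper. For (i) this is the Duhamel formula, the Volterra inequality from \eqref{0-kp}, Neumann-series iteration and Tonelli; for (ii) it is synchronous coupling. Your mollification step also supplies the a priori local integrability of $\|\nabla P^G_s f\|_\infty$ that the paper's ``iterating yields'' tacitly uses.

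One small fix: with $P^0$ as the reference semigroup, the It\^o--Krylov identity of Proposition~\ref{Lp-drift-TV-stability} gives $P^G_tf-P^0_tf=\int_0^tP^G_s(G\cdot\nabla P^0_{t-s}f)\,\dif s$, not your formula. To get your mild formula, swap the roles: take $P^{G_\eps}$ as the reference (differentiated) semigroup and the Ornstein--Uhlenbeck process as the perturbed one.
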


\begin{proof}
	We first prove assertion (i). For $G\in L^p$,
	by Duhamel's formula we have
	\[
	P_t^Gf=
	P_t^0f
	+
	\int_0^t
	P_{t-s}^0
	\left(
	G\cdot\nabla_x P_s^Gf
	\right)\dif s.
	\]
	Under the assumption (\ref{gg}),
	\[
	\|G\cdot\nabla P_s^Gf\|_{L^p}
	\leq
	M\|\nabla P_s^Gf\|_\infty.
	\]
	Consequently, for bounded $f$, by (\ref{0-kp}) we deduce that
	\[
	\begin{aligned}
		\|\nabla_x P_t^Gf\|_\infty
		&\leq
		k_{\alpha,\sigma,\infty}(t)\|f\|_\infty
		+
		M\int_0^t
		k_{\alpha,\sigma,p}(t-s)
		\|\nabla_x P_s^Gf\|_\infty\dif s.
	\end{aligned}
	\]
	Iterating this  inequality yields
	\[
	\|\nabla_x P_t^Gf\|_\infty
	\leq
	K_{\alpha,\sigma,M,p}(t)\|f\|_\infty,
	\]
	where $K_{\alpha,\sigma,M,p}$ is given by (\ref{KM}).
	If $MA_{\alpha,\sigma,p}<1$, then Tonelli's theorem gives
	\[
	\begin{aligned}
		\int_0^\infty
		K_{\alpha,\sigma,M,p}(t)\dif t
		&=
		\sum_{n=0}^\infty
		M^n
		A_{\alpha,\sigma,p}^{\,n}
		A_{\alpha,\sigma,\infty}
		\\
		&=
		\frac{
			A_{\alpha,\sigma,\infty}
		}{
			1-MA_{\alpha,\sigma,p}
		}.
	\end{aligned}
	\]
	When $p=\infty$, \eqref{OU-recover-infty} follows by (\ref{Ainfty}).
	
	We now prove the assertion (ii).  Let $X_t^x$ and $X_t^y$ be two solutions of
	\eqref{ousde} driven by the same Brownian motion and starting from $x$ and
	$y$, respectively.  Then
	\[
	\dif(X_t^x-X_t^y)
	=
	-\alpha(X_t^x-X_t^y)\dif t
	+
	\big(G(X_t^x)-G(X_t^y)\big)\dif t .
	\]
	By the one-sided Lipschitz condition \eqref{one-sided-G},
	\[
	\frac{\dif}{\dif t}|X_t^x-X_t^y|^2
	\leq
	-2(\alpha-\eta_G)|X_t^x-X_t^y|^2.
	\]
	Thus we have
	\[
	|X_t^x-X_t^y|
	\leq
	\e^{-(\alpha-\eta_G)t}|x-y|.
	\]
	Consequently, for every Lipschitz continuous function $f$,
	\[
	\begin{aligned}
		|P_t^Gf(x)-P_t^Gf(y)|
		&\leq
		\mathbb E|f(X_t^x)-f(X_t^y)|
		\\
		&\leq
		\operatorname{Lip}(f)
		\e^{-(\alpha-\eta_G)t}|x-y|,
	\end{aligned}
	\]
	which in turn  implies (\ref{OUW}). The \(W_1\) kernel and its integral follow immediately.
\end{proof}

\begin{remark}[Two explicit kernels and their roles]
	The two preceding results make explicit the relation between the general stability estimates of Section~3 and the concrete Ornstein--Uhlenbeck type computations. In the bounded-test-function or  total variation framework, the perturbation estimate uses the gradient of the Ornstein--Uhlenbeck semigroup. The kernel takes the form
	\[
	K_{\rm TV}^{\rm OU}(t)
	=
	2\|\sigma^{-1}\|
	\sqrt{\frac{\alpha}{\pi}}
	\frac{\e^{-\alpha t}}{\sqrt{1-\e^{-2\alpha t}}},
	\]
	which behaves like \(t^{-1/2}\) as \(t\downarrow0\) and like \(\e^{-\alpha t}\) as \(t\to\infty\).  This is precisely the Ornstein--Uhlenbeck counterpart of the kernel \(\sK_{1,\mathfrak F}\) in Section~3, where the short-time singularity reflects the parabolic regularization of the semigroup and the long-time tail is controlled by the ergodic rate of the reference equation.
	
	In contrast, in the Lipschitz or Wasserstein-1 framework, the explicit linear structure of the Ornstein--Uhlenbeck equation yields the sharper kernel
	$$
	K_{W_1}^{\rm OU}(t)=\e^{-\alpha t},
	$$
	which has no short-time singularity. Since \(W_1(\mu,\nu)\le {\bf d}_V(\mu,\nu)\) whenever \(V(x)\ge 1+|x|\), this \(W_1\)-based estimate is compatible with the weighted total variation framework, but it is  sharper for mean-field examples where the law dependence acts through low-order moments.
\end{remark}

The following result records the ergodic properties of the Ornstein--Uhlenbeck semigroup with an \(L^p\)-drift and the uniform \(L^q\)-bound on its invariant density, which will be used in the uniqueness proof for granular media equations. The proof is omitted, see e.g. \cite{Huang2025,XZ}.

\begin{lemma}\label{frozen-invariant-density}
	For every \(G\in L^p\) with \(p\in(d,\infty]\), SDE \eqref{ousde} admits a unique invariant  measure \(\nu_G\), and for every initial distribution $\nu_0$, there exist   constants \(C_{\nu_0},\lambda>0\) such that
	\begin{align}\label{ouTV}
			\left\|
	    \nu_0 P_t^G-\nu_G
		\right\|_{\rm TV}
		\leq
		C_{\nu_0}\, \e^{-{ \lambda} t},
		\qquad t\geq0.
	\end{align}
	Moreover, \(\nu_G\) has a density \(\rho_G\in L^q(\mathbb R^d)\) with \(q=p/(p-1)\), and
	\begin{align}\label{invariant-density-constant}
		\mathfrak C_{p,M}:=\sup_{\|G\|_{L^p}\leq M}
		\|\rho_G\|_{L^q}<\infty.
	\end{align}
	When \(p=\infty\), one has \(q=1\) and \(\mathfrak C_{\infty,M}=1\).
\end{lemma}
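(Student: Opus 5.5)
The plan is to treat the statement in three layers: well-posedness and Lyapunov control of \eqref{ousde}, then exponential total variation ergodicity via Harris' theorem, and finally the uniform $L^q$ bound on the invariant density via a Duhamel argument against the linear Ornstein--Uhlenbeck semigroup $P_t^0$.

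\emph{Step 1: Well-posedness and Lyapunov control.} Since $p>d$ and $G\in L^p$, weak (indeed strong) well-posedness of \eqref{ousde} follows from the Krylov--Zvonkin theory. To obtain a Lyapunov function uniform over $\{\|G\|_{L^p}\leq M\}$, I would pass to the Zvonkin transform $\Phi=x+u$. Here $u$ solves
\[
\lambda u-\tfrac12\tr(\sigma\sigma^\ast\nabla^2u)-G\cdot\nabla u=G
\]
with $\lambda$ large, exactly as in the proof of Corollary~\ref{sing}, so that $\|u\|_\infty+\|\nabla u\|_\infty\leq \tfrac12$ uniformly in $\|G\|_{L^p}\leq M$. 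In the new variables the drift is $-\alpha x$ plus a bounded term, so $U(y)=|y|^2$ satisfies $\tilde{\mathscr L}U\leq -\alpha U+C_M$. Pulling back gives $\mathscr L^G W\leq -cW+C_M$ with $W\asymp 1+|x|^2$.

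\emph{Step 2: Exponential ergodicity.} Nondegeneracy of $\sigma$ gives strong Feller and irreducibility (as recalled from \cite{XZ} in Section~4), hence a minorization on sublevel sets of $W$. Harris' theorem then yields existence and uniqueness of $\nu_G$ together with \eqref{ouTV}, with $C_{\nu_0}$ proportional to $1+\nu_0(W)$ (finite for admissible $\nu_0$) and a rate $\lambda$ depending only on $\alpha,\sigma,M,p$. Existence of a density follows from the Krylov estimate.

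\emph{Step 3: Uniform $L^q$ bound.} For $p=\infty$ we have $q=1$ and $\|\rho_G\|_{L^1}=1$, so there is nothing to prove. For $p<\infty$, I use the Duhamel formula with the roles swapped relative to Theorem~\ref{ouk}:
\[
P_t^Gf=P_t^0f+\int_0^tP_s^G\big(G\cdot\nabla P^0_{t-s}f\big)\dif s .
\]
Integrating against $\nu_G$ and using invariance, for $f\in L^p\cap\mathbf C_b$ this gives
\[
|\nu_G(f)|\leq \|P_t^0f\|_\infty+\int_0^t \|\nabla P^0_{r}f\|_\infty\,\nu_G(|G|)\dif r .
\]
The Gaussian bound gives $\|P_t^0f\|_\infty\leq c\,t^{-d/(2p)}\|f\|_{L^p}$, and \eqref{0-kp} gives $\|\nabla P^0_rf\|_\infty\leq k_{\alpha,\sigma,p}(r)\|f\|_{L^p}$. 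By H\"older, $\nu_G(|G|)\leq \|\rho_G\|_{L^q}M$. Taking the supremum over $\|f\|_{L^p}\leq1$ yields
\[
\|\rho_G\|_{L^q}\leq c\,t^{-d/(2p)}+M\|\rho_G\|_{L^q}\int_0^t k_{\alpha,\sigma,p}(r)\dif r .
\]
Since $k_{\alpha,\sigma,p}(r)\sim r^{-1/2-d/(2p)}$ is integrable near $0$, I fix $t_M$ with $M\int_0^{t_M}k_{\alpha,\sigma,p}\leq\tfrac12$. This gives $\|\rho_G\|_{L^q}\leq 2c\,t_M^{-d/(2p)}$, which depends only on $M$, $p$, $\alpha$ and $\sigma$. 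That bound is \eqref{invariant-density-constant}.

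\emph{Main obstacle.} The absorption in Step 3 requires knowing a priori that $\|\rho_G\|_{L^q}<\infty$, and it requires justifying Duhamel's formula for singular $G$. I would first run the argument for mollified, bounded $G_n\to G$ in $L^p$ with $\|G_n\|_{L^p}\leq M$. For these, $\rho_{G_n}$ is smooth with Gaussian tails, so the norm is finite and the formula is classical. Next, the uniform Lyapunov bound from Step 1 makes $\{\nu_{G_n}\}$ tight. The local $L^p$ stability argument (Corollary~\ref{Lp}, as in Proposition~\ref{weak-continuity-T}) shows that any weak limit is invariant for $G$, hence equals $\nu_G$. Finally, weak lower semicontinuity of the $L^q$ norm, for $q\in(1,\infty)$ by duality with $L^p$ test functions, transfers the uniform bound to $\rho_G$.
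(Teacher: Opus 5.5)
Your argument is correct. It is more self-contained than the paper, which omits the proof of this lemma entirely and only refers to \cite{Huang2025,XZ}.

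Your Steps 1--2 are the standard mechanism behind such citations: a Lyapunov function uniform over $\{\|G\|_{L^p}\le M\}$ obtained through the Zvonkin transform, followed by strong Feller, irreducibility and Harris' theorem. The genuinely different ingredient is Step 3. Instead of importing the $L^q$ density bound, you test the reversed Duhamel formula against the invariant measure, using only the Gaussian kernel $k_{\alpha,\sigma,p}$ already computed in \eqref{OU-kp}, and then absorb.

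This buys explicit constants. Since $\|P^0_tf\|_\infty\le\|g_{Q_t}\|_{L^q}\|f\|_{L^p}$ and $t\mapsto\|g_{Q_t}\|_{L^q}$ is decreasing, your inequality holds for every $t$. If $MA_{\alpha,\sigma,p}<1$ you may let $t\to\infty$ and obtain
\[
\mathfrak C_{p,M}\le\frac{\|g_{Q_\infty}\|_{L^q}}{1-MA_{\alpha,\sigma,p}},
\]
where $g_{Q_\infty}$ is the Ornstein--Uhlenbeck invariant density. Because $M_pA_{\alpha,\sigma,p}<1$ is implied by \eqref{gran-Lp-uniqueness-small}, this replaces the unspecified constant $\mathfrak C_{p,M_p}$ in Theorem~\ref{ou}(i) by an explicit quantity, which the citation does not do. What the cited route provides in addition is control of the time-marginal densities $\sup_{t\ge t_0}\|\rho_t\|_{L^q}$ of the nonlinear flow. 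The paper needs this later, via \cite[Proposition 5.4]{Huang2025}, and your stationary argument does not address it.

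A few points to tighten.

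First, your resolvent equation lacks the term $-\alpha x\cdot\nabla u$. The transformed drift is therefore $[(I+\nabla u)(-\alpha x)+\lambda u]\circ\Phi^{-1}$, which contains the linearly growing piece $-\alpha\nabla u(x)\,x$, so it is not ``$-\alpha y$ plus a bounded term''. The conclusion survives: $\|\nabla u\|_\infty\le\frac12$ and $|x|\le|y|+\|u\|_\infty$ still give $\tilde{\mathscr L}|y|^2\le-\alpha|y|^2+C_M(1+|y|)$.

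Second, the obstacle you identify is lighter than stated. For bounded $G_n$, the same Duhamel inequality with the crude bound $\nu_{G_n}(|G_n|)\le\|G_n\|_\infty$ already gives $\|\rho_{G_n}\|_{L^q}\le\|g_{Q_t}\|_{L^q}+\|G_n\|_\infty\int_0^tk_{\alpha,\sigma,p}(r)\,\dif r<\infty$. No Gaussian-tail analysis of $\rho_{G_n}$ is needed. The Duhamel formula itself needs no mollification either, since It\^o's formula is only applied to the smooth function $P^0_{t-s}f$. The approximation is needed solely to transfer the uniform bound to $G$, and your tightness, stability and lower-semicontinuity argument does that correctly.

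Third, your restriction to $\nu_0(W)<\infty$ is necessary, not a weakness. Already for $G=0$, an initial law with $\nu_0(|x|>r)\sim1/\log r$ has total variation distance to equilibrium decaying no faster than $c/t$. So ``every initial distribution'' must be read with the paper's admissibility convention.

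Finally, your claim that the Harris rate depends only on $(\alpha,\sigma,M,p)$ would require a minorization uniform in $G$. Strong Feller and irreducibility for each fixed $G$ do not supply that, and the lemma does not need it.
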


\subsection{Existence and uniqueness of invariant measures}

We give the proof of the existence result stated in Theorem~\ref{coe} by  verifying the assumptions of the general existence theorems.

\begin{proof}[\bf{Proof of Theorem \ref{coe}}]
	Since $\sigma$ is constant and non-degenerate, the local
	ellipticity assumption $(\mathbf H_0)$ is satisfied.  For
	\(\mu\in\mathscr P(\mathbb R^d)\), define
	\[
	G_\mu(x)
	:=
	\int_{\mathbb R^d}F(x,y)\mu(\dif y).
	\]
	Then the drift in \eqref{gran} is
	\[
	b(x,\mu)=-\alpha x+G_\mu(x).
	\]
	
	\medskip
	\noindent
	\textit{Proof of assertion (i).} By Minkowski's inequality, we have
	\[
	\sup_{\mu\in\mathscr P(\mathbb R^d)}
	\left\|
	\int_{\mathbb R^d}F(\cdot,y)\,\mu(\dif y)
	\right\|_{L^p}
	\leq
	\sup_{y\in\mathbb R^d}\|F(\cdot,y)\|_{L^p}
	<\infty.
	\]
	Moreover, the  continuity of
	$y\mapsto F(\cdot,y)|_{B_R}$ as an $L^p(B_R)$-valued map
	implies that
	\[
	\mu_n\Rightarrow\mu
	\quad\Longrightarrow\quad
	\|G_{\mu_n}-G_\mu\|_{L^p(B_R)}\longrightarrow0,
	\qquad R>0.
	\]
	We apply Corollary~\ref{sing} with
	$b_0(x,\mu)=-\alpha x$ and $b_1(x,\mu)=G_\mu(x)$.
	Indeed, the constant non-degenerate diffusion satisfies the
	required regularity assumptions. For a uniformly chosen
	Zvonkin transformation with
	$\sup_\mu\|\nabla_x u_\mu\|_\infty<1/2$, the transformed
	coefficients satisfy
	\[
	2\langle y,\tilde b_\mu(y)\rangle
	+\|\tilde\sigma_\mu(y)\|_{\mathrm{HS}}^2
	\leq -A|y|^2+C
	\]
	with constants $A>0$ and $C<\infty$ independent of $\mu$.
	Writing $D:=\sup_\mu\|u_\mu\|_\infty$, the trapping condition
	holds with $r_1=r_3=2$, $r_2=r_4=0$ and any sufficiently
	large $M>D+\sqrt{C/A}$.
	Corollary~\ref{sing} therefore yields an invariant
	measure in $\sP_2(\mathbb R^d)$.

	\medskip
	\noindent
	\textit{Proof of assertion (ii).} Let \(M_0>0\) be the trapping level appearing in the relevant
	subcritical, critical, or supercritical balance condition. By assumption, the map
	\[
	\Phi_R:y\longmapsto F(\cdot,y)|_{B_R}
	\]
	is continuous with values in $L^\infty(B_R)$ and satisfies
	\[
	\|\Phi_R(y)\|_{L^\infty(B_R)}
	\leq C_R(1+|y|^\delta),
	\qquad \delta<r.
	\]
	For $L>1$, choose $\chi_L\in C_c(\mathbb R^d;[0,1])$
	with $\chi_L=1$ on $B_L$.
	Since $\chi_L\Phi_R$ is a bounded continuous
	$L^\infty(B_R)$-valued map with compact support, weak
	convergence implies
	\[
	\left\|
	\int_{\mathbb R^d}
	\chi_L(y)\Phi_R(y)(\mu_n-\mu)(\dif y)
	\right\|_{L^\infty(B_R)}
	\longrightarrow0.
	\]
	On the other hand, the uniform $r$-moment bound gives
	\[
	\begin{aligned}
		&
		\int_{\mathbb R^d}
		(1-\chi_L(y))
		\|\Phi_R(y)\|_{L^\infty(B_R)}
		(\mu_n+\mu)(\dif y)
		\\
		&\qquad\leq
		2C_RM_0\left(L^{-r}+L^{\delta-r}\right).
	\end{aligned}
	\]
	Consequently,
	\[
	\limsup_{n\to\infty}
	\|G_{\mu_n}-G_\mu\|_{L^\infty(B_R)}
	\leq
	2C_RM_0\left(L^{-r}+L^{\delta-r}\right).
	\]
	Letting $L\to\infty$ proves the continuity assumption
	$(\widetilde{\mathbf H}_2)$, since $-\alpha x$ and $\sigma$
	are independent of the law.
	The assumed Lyapunov and balance conditions then allow us
	to apply Corollary~\ref{cor1}, yielding an invariant
	measure in $\mathscr P_r(\mathbb R^d)$.
\end{proof}

We now prove the uniqueness and convergence results stated in Theorem~\ref{ou}.

\begin{proof}[{\bf Proof of Theorem \ref{ou}}]
	We divide the proof into two parts, corresponding to the two different topologies.

	\vspace{1mm}
	\noindent {\it  (i) (Total variation criterion)}.
	Let $\mu_\ast$ be an invariant  measure, whose existence follows
	from Theorem \ref{coe}, and define the frozen drift
	\[
	G_\ast(x)
	:=
	\int_{\mathbb R^d}F(x,y)\mu_\ast(\dif y).
	\]
	By Minkowski's inequality,
	\[
	\|G_\ast\|_{L^p}
	\leq
	\int_{\mathbb R^d}
	\|F(\cdot,y)\|_{L^p}\mu_\ast(\dif y)
	\leq
	M_p.
	\]
	Thus the frozen reference equation is  an Ornstein--Uhlenbeck equation perturbed by a
	$L^p$-drift:
	\[
	\dif\bar X_t
	=
	[-\alpha\bar X_t+G_\ast(\bar X_t)]\dif t
	+
	\sigma\dif W_t.
	\]
	By Theorem \ref{ouk}, its semigroup \(P_t^\ast\) satisfies
	\[
	\|\nabla_x P_t^\ast f\|_\infty
	\leq
	K_{\alpha,\sigma,M_p,p}(t)\|f\|_\infty.
	\]
	Let $\nu$ be another invariant   measure of \eqref{gran}, and define
	\[
	G_\nu(x)
	:=
	\int_{\mathbb R^d}F(x,y)\nu(\dif y).
	\]
	The stationary McKean--Vlasov equation with marginal law $\nu$ is precisely
	the frozen autonomous equation
	\[
	\dif X_t
	=
	[-\alpha X_t+G_\nu(X_t)]\dif t+\sigma\dif W_t.
	\]
	Since
	\(
	\|G_\nu\|_{L^p}\leq M_p,
	\)
	Lemma \ref{frozen-invariant-density} implies that $\nu$ has a density
	$\rho_\nu\in L^q(\mathbb R^d)$ satisfying
	\[
	\|\rho_\nu\|_{L^q}
	\leq
	\mathfrak C_{p,M_p}.
	\]
	
	Let $(X_t^\nu)_{t\geq0}$ be a stationary solution with
	\[
	\mathcal L_{X_t^\nu}=\nu,
	\qquad t\geq0.
	\]
	Since the marginal law is constant, its density equals  $\rho_\nu$ at
	every time. In particular,
	\[
	\sup_{s\geq0}
	\|\rho_s^\nu\|_{L^q}
	=
	\|\rho_\nu\|_{L^q}
	\leq
	\mathfrak C_{p,M_p}.
	\]
	Applying  Proposition \ref{Lp-drift-TV-stability} with \(t_0=0\) gives, for every \(T>0\),
	\[
	\left\|
	\nu-\nu P_T^\ast
	\right\|_{\rm TV}
	\leq
	\mathfrak C_{p,M_p}
	\int_0^T
	K_{\alpha,\sigma,M_p,p}(T-s)
	\|G_\nu-G_\ast\|_{L^p}\dif s.
	\]
	By the definition \eqref{gran-Mp}, we have
	\[
	\|G_\nu-G_\ast\|_{L^p}
	\leq
	M_p\|\nu-\mu_\ast\|_{\rm TV}.
	\]
	Therefore,
	\[
	\left\|
	\nu-\nu P_T^\ast
	\right\|_{\rm TV}
	\leq
	\mathfrak C_{p,M_p}M_p
	\|\nu-\mu_\ast\|_{\rm TV}
	\int_0^T
	K_{\alpha,\sigma,M_p,p}(s)\dif s.
	\]
	On the other hand,
	\[
	\begin{aligned}
		\|\nu-\mu_\ast\|_{\rm TV}
		&\leq
		\|\nu-\nu P_T^\ast\|_{\rm TV}
		+
		\|\nu P_T^\ast-\mu_\ast\|_{\rm TV}
		\\
		&\leq
		\mathfrak C_{p,M_p}M_p
		\|\nu-\mu_\ast\|_{\rm TV}
		\int_0^T
		K_{\alpha,\sigma,M_p,p}(s)\dif s
		+
		{ C_0}\,\e^{-\lambda T}.
	\end{aligned}
	\]
	The second term decays exponentially by the frozen ergodic estimate \eqref{ouTV}.  Letting $T\to\infty$, we obtain
	\[
	\|\nu-\mu_\ast\|_{\rm TV}
	\leq
	\mathfrak C_{p,M_p}M_p
	\left(
	\int_0^\infty
	K_{\alpha,\sigma,M_p,p}(s)\dif s
	\right)
	\|\nu-\mu_\ast\|_{\rm TV}.
	\]
	Hence $\nu=\mu_\ast$ provided
	\[
	\mathfrak C_{p,M_p}M_p
	\int_0^\infty
	K_{\alpha,\sigma,M_p,p}(s)\dif s<1.
	\]
	By \eqref{OU-K-Mp-integral}, this condition is equivalent to
	\[
	M_p
	\left(
	A_{\alpha,\sigma,p}
	+
	\mathfrak C_{p,M_p}
	A_{\alpha,\sigma,\infty}
	\right)
	<1,
	\]
	which is exactly \eqref{gran-Lp-uniqueness-small}. This proves uniqueness.
	
	To prove exponential convergence, let the initial distribution  $\mathcal{L}_{X_0}=\nu_0$, and $\rho_t^{\nu_0}$ be the density of $\mathcal{L}_{X_t}$.  Fix $t_0>0$ such that
	\[
	\mathfrak C_{p,t_0}^{\nu_0}
	:=
	\sup_{t\geq t_0}
	\|\rho_t^{\nu_0}\|_{L^q}
	<\infty,
	\]
	which is guaranteed by  \cite[Proposition 5.4]{Huang2025}. For $t\geq t_0$, decompose
\[
		{\mathcal L_{X_t}}-\mu_\ast
		=
		\left(
		{\mathcal L_{X_t}}-\mathcal L_{X_{t_0}}P_{t-t_0}^\ast
		\right)
		+
		\left(
		\mathcal L_{X_{t_0}}P_{t-t_0}^\ast-\mu_\ast
		\right).
		\]
	By Proposition \ref{Lp-drift-TV-stability} on the interval \([t_0,t]\), we have
	\begin{align*}
		&
		\left|
		\mathbb E f(X_t)
		-
		\mathbb E f\big(\bar X_{t-t_0}^\ast(X_{t_0})\big)
		\right|\leq 	\mathfrak C_{p,t_0}^{\nu_0}\|f\|_\infty
		\int_{t_0}^t
		K_{\alpha,\sigma,M_p,p}(t-s)
		\|G_{\cL_{X_s}}-G_{\mu_\ast}\|_{L^p}\dif s
		\nonumber\\
		&\qquad\leq
		 	\mathfrak C_{p,t_0}^{\nu_0}M_p\|f\|_\infty
		\int_{t_0}^t
		K_{\alpha,\sigma,M_p,p}(t-s)
		\|\mathcal L_{X_s}-\mu_\ast\|_{\rm TV}
		\dif s.
	\end{align*}
	 Taking the supremum over
\(\|f\|_\infty\leq1\), and using the frozen
exponential ergodicity \eqref{ouTV}, we obtain
\[
\begin{aligned}
\|\mathcal L_{X_t}-\mu_\ast\|_{\rm TV}
&\leq
C_0\e^{-\lambda(t-t_0)}
\\
&\quad+
\mathfrak C_{p,t_0}^{\nu_0}M_p
\int_{t_0}^t
K_{\alpha,\sigma,M_p,p}(t-s)
\|\mathcal L_{X_s}-\mu_\ast\|_{\rm TV}
\,\dif s .
\end{aligned}
\]
	Define
	\[
	\mathscr D_{t_0}(r)
	:=
	\|\mathcal L_{X_{t_0+r}}-\mu_\ast\|_{\rm TV},
	\qquad r\geq0.
	\]
	After the change of variables $s=t_0+u$, the preceding inequality becomes
	\begin{align*}
		\mathscr D_{t_0}(r)
		&\leq
		{ C_0\e^{-\lambda r}}
		+
		\mathfrak C_{p,t_0}^{\nu_0}M_p
		\int_0^r
		K_{\alpha,\sigma,M_p,p}(r-u)
		\mathscr D_{t_0}(u)\dif u.
	\end{align*}
	This is exactly the convolution inequality in Lemma \ref{gron}, with
	\[
	{\Gamma(r)}
	=
		\mathfrak C_{p,t_0}^{\nu_0} M_p
	K_{\alpha,\sigma,M_p,p}(r).
	\]
	If there exists $\theta\in(0,\lambda]$ such that
	\[
	\mathfrak C_{p,t_0}^{\nu_0} M_p
	\int_0^\infty
	\e^{\theta r}
	K_{\alpha,\sigma,M_p,p}(r)\dif r
	<1,
	\]
	then Lemma \ref{gron} gives
	\[
	\|{\mathcal L_{X_t}}-\mu_\ast\|_{\rm TV}
	\leq
	C_{\nu,t_0,\theta}\e^{-\theta t},
	\qquad t\geq t_0.
	\]
	This proves the quantitative ergodicity assertion in the total variation criterion.
	
	\vspace{1mm}
	\noindent  {\it (ii) ($W_1$ criterion)}. We now turn to the \(W_1\)-based uniqueness result. For the frozen drift
	$
	G_\ast(x)
	$,
	the definition of $\mathfrak L_x$ gives
	\[
	\langle G_\ast(x)-G_\ast(z),x-z\rangle
	\leq
	\mathfrak L_x|x-z|^2.
	\]
	Hence by Theorem \ref{ouk}, the frozen semigroup $P_t^\ast$ satisfies
	$$
	|\nabla_xP_t^\ast f|
	\leq
	\e^{-(\alpha-\mathfrak L_x)t}\operatorname{Lip}(f).
	$$
	Furthermore, by Kantorovich--Rubinstein duality and the Lipschitz continuity
	of $F$ in its second variable,
	\[
	\sup_{x\in\mathbb R^d}
	\left|
	\int_{\mathbb R^d}F(x,y)(\mu-\nu)(\dif y)
	\right|
	\leq
	\mathfrak L_yW_1(\mu,\nu).
	\]
	Thus the anchored perturbation kernel is
	\[
	\Gamma_{W_1}(t)
	=
	\mathfrak L_y \e^{-(\alpha-\mathfrak L_x)t},
	\]
	and
	\[
	\int_0^\infty\Gamma_{W_1}(t)\dif t
	=
	\frac{\mathfrak L_y}{\alpha-\mathfrak L_x}.
	\]
	Condition \eqref{sh} is precisely the requirement that this quantity be
	strictly less than one.
	The conclusion follows  from Corollary~\ref{cor:W1-unique}. This completes the proof of Theorem~\ref{ou}.
\end{proof}

\section{Dynamical Curie--Weiss model}

This section is devoted to the dynamical Curie--Weiss model \eqref{ex2}, i.e.,
 \begin{align*}
	\dif X_t
	=
	-(\gamma+c)X_t\dif t
	+
	\sqrt c\,\tanh\left(\sqrt c\,\mE X_t\right)\dif t
	+
	\dif W_t,
	\qquad X_0=\xi.
\end{align*}
Our purpose is threefold: to identify all invariant measures, to quantify the convergence in the uniqueness
and critical regimes, and to show how the local anchored principle, combined
with the closed mean equation, yields basin-dependent ergodicity when several
invariant measures coexist.  For simplicity,  for $\mu\in\sP(\mathbb{R})$, we write
$$
m_\mu:=\int_{\mR}x\mu(\dif x)
$$
whenever the integral is finite.

\subsection{Invariant measures and phase transition}

We first give:
\begin{proof}[{\bf Proof of Theorem \ref{tan1}}]
We divide the proof into four steps.

\smallskip
\noindent
{\it Step 1. Existence.}  Let $$
b(x,\mu)=-(\gamma+c)x+\sqrt c\,\tanh(\sqrt c\,m_\mu),
\qquad
\sigma(x,\mu)=1.
$$
We apply the existence criterion on moment balls in
\(\mathscr P_2(\mathbb R)\). Assumption $(\mathbf H_0)$ is immediate since the diffusion coefficient is a constant and the drift is locally bounded
in $x$.  Moreover,
$$
2xb(x,\mu)+1
=
-2(\gamma+c)x^2
+
2\sqrt c\,x\tanh(\sqrt c\,m_\mu)+1.
$$
By Young's inequality and the boundedness of \(\tanh\), there exists a constant \(C_1>0\) such that
$$
2xb(x,\mu)+1
\leq
-(\gamma+c)x^2+C_1.
$$
Thus the Lyapunov condition in $(\mathbf{\tilde H_1})$ holds with
$r_1=2$, $r_2=0$ and $r_4=0$.  Moreover, if \(\mu_n,\mu\in\mathscr P_2^M(\mathbb R)\) and \(\mu_n\Rightarrow\mu\), then
the uniform second-moment bound implies \(m_{\mu_n}\to m_\mu\). Hence
\[
b(\cdot,\mu_n)\to b(\cdot,\mu)
\]
locally uniformly, and \((\widetilde{\mathbf H}_2)\) holds. Therefore
Corollary \ref{cor1} yields the existence of an invariant measure.

\smallskip
\noindent
{\it Step 2. Identification of invariant measures.} Taking expectation in \eqref{ex2},  the mean
$$
m(t):=\mE X_t
$$
solves the closed ODE
\begin{align}\label{mean-ode-tanh}
m'(t)
=
F(m(t)),
\qquad
F(m):=-(\gamma+c)m+\sqrt c\,\tanh(\sqrt c\,m).
\end{align}
Let $\mu$ be an invariant  measure and set $m=m_\mu$.  The frozen
equation with parameter $\mu$ is
$$
\dif X_t
=
\Big[-(\gamma+c)X_t+\sqrt c\,\tanh(\sqrt c\,m)\Big]\dif t
+
\dif W_t.
$$
Its unique invariant measure is Gaussian with variance $1/[2(\gamma+c)]$ and mean
$$
\frac{\sqrt c}{\gamma+c}\tanh(\sqrt c\,m).
$$
Since the invariant measure must have mean $m$, we obtain
$$
m=\frac{\sqrt c}{\gamma+c}\tanh(\sqrt c\,m),
$$
which is precisely \eqref{self-con-tanh}.  This gives \eqref{inv-tanh}.
Conversely, any solution of \eqref{self-con-tanh} defines the Gaussian measure
\eqref{inv-tanh}, which is invariant for the frozen equation with parameter
equal to itself.  Hence it is invariant for \eqref{ex2}.

\smallskip
\noindent
{\it Step 3. Phase transition.}
To count the solutions of \eqref{self-con-tanh}, set
$$
y:=\sqrt c\,m,
\qquad
\beta:=\frac{\gamma+c}{c}.
$$
Then \eqref{self-con-tanh} is equivalent to
$$
\beta y=\tanh y.
$$

If $\gamma\geq0$, then $\beta\geq1$.  Since
$$
|\tanh y|<|y|,
\qquad y\neq0,
$$
the only solution is $y=0$, hence $m=0$.

If $-c<\gamma<0$, then $\beta\in(0,1)$.  The function
$$
y\mapsto \tanh y-\beta y
$$
is odd, has positive derivative at zero, and tends to $-\infty$ as
$y\to+\infty$.  Since $\tanh$ is strictly concave on $(0,\infty)$, there is
exactly one positive zero, and by symmetry  exactly one negative zero.
Thus  there are precisely   three invariant measures $\mu_{-}, \mu_{0}, \mu_{+}$, with first moments given by three solutions of \eqref{self-con-tanh}:
$$
m_-<0,
\qquad
m_0=0,
\qquad
m_+>0,
$$
and $m_-=-m_+$.

\smallskip
\noindent
{\it Step 4. Stability.}
For an invariant mean $m_i$,
$$
F'(m_i)
=
-(\gamma+c)+c\left(1-\tanh^2(\sqrt c\,m_i)\right),
$$
and hence
\begin{equation}\label{F'(m_i)}
-F'(m_i)
=
\gamma+c\tanh^2(\sqrt c\,m_i)=\lambda_i^{\rm mean}.
\end{equation}
For $m_0=0$, this gives $-F'(m_0)=\gamma<0$ in the regime $-c<\gamma<0$, and
therefore $m_0$ is unstable.

We now show that $m_-$ and $m_+$ are stable.  Let $y_i=\sqrt c\,m_i\neq0$.
Then
\begin{equation*}
	-F'(m_i)
	=
	\gamma+c\tanh^2(y_i).
\end{equation*}
It suffices to prove $\gamma+c\tanh^2(y_i)>0$, i.e.
$$
\beta>1-\tanh^2 y_i.
$$
Using $\beta y_i=\tanh y_i,$ this is equivalent to
$$
\frac{\tanh y_i}{y_i}>1-\tanh^2 y_i.
$$
Since both sides are even functions of $y_i$, it suffices to consider
$y_i>0$.  Define
$$
\Phi(y):=\tanh y-y\left(1-\tanh^2 y\right),
\qquad y>0.
$$
Then
\begin{align*}
\Phi'(y)
&=
\left(1-\tanh^2 y\right)
-
\left(1-\tanh^2 y\right)
+
2y\tanh y\left(1-\tanh^2 y\right)
\\
&=
2y\tanh y\left(1-\tanh^2 y\right)>0,
\qquad y>0.
\end{align*}
Moreover,
$$
\lim_{y\downarrow0}\Phi(y)=0.
$$
Thus $\Phi(y)>0$ for all $y>0$.  This proves the stability of the two non-zero equilibria.
\end{proof}

\subsection{Ergodicity in the uniqueness regime and critical slowing down}
We now prove the quantitative ergodic behavior in the uniqueness regime
$\gamma\geq0$, with particular attention to the critical case $\gamma=0$.

\begin{proof}[{\bf Proof of Theorem \ref{tan2}}]
	Recall from \eqref{mean-ode-tanh} that the mean
	$m(t):=\mE X_t$ satisfies
	\[
	m'(t)
	=
	F(m(t))
	=
	-(\gamma+c)m(t)
	+
	\sqrt c\,\tanh(\sqrt c\,m(t)).
	\]
We divide the proof into three steps.

\smallskip
\noindent
{\it Step 1. The case $\gamma>0$.} Since $\tanh x$ has the same sign as $x$ and
$|\tanh x|\leq |x|$, we have
\[
\frac{\dif}{\dif t}|m(t)|
\leq
-\gamma|m(t)|.
\]
Consequently,
\begin{equation}\label{mean-positive-gamma-tan2}
|m(t)|
\leq
|m(0)|\e^{-\gamma t},
\qquad t\geq0.
\end{equation}
Let $\bar X_t$ be the centered frozen Ornstein--Uhlenbeck process
started from the same initial distribution $\nu_0=\mathcal L_\xi$:
\[
\dif\bar X_t
=
-(\gamma+c)\bar X_t\,\dif t+\dif W_t.
\]
By Theorem~\ref{thm2}, the Lipschitz continuity of $\tanh$, and the
definition of $\sK_1$, for $t\geq2$,
\begin{align}\label{tan2-frozen-comparison}
{\bf d}_V(\mathcal L_{X_t},\nu_0\bar P^0_t)
&\leq
C_1\int_0^t
\sK_1(t-s)
\left|
\sqrt c\,\tanh(\sqrt c\,m(s))
\right|\dif s
\nonumber\\
&\leq
C_1\int_0^t\sK_1(t-s)|m(s)|\,\dif s
\nonumber\\
&\leq
C_1\int_{t-2}^t
\frac{|m(s)|}{\sqrt{t-s}}\,\dif s
+
C_1\int_0^{t-2}
\e^{-\lambda_V(t-s-1)}|m(s)|\,\dif s.
\end{align}
Fix $0<\lambda<\lambda_V\wedge\gamma$. By
\eqref{mean-positive-gamma-tan2}, we have
\begin{align*}
\int_{t-2}^t
\frac{|m(s)|}{\sqrt{t-s}}\,\dif s
&\leq
|m(0)|\e^{-\gamma t}
\int_0^2 r^{-1/2}\e^{\gamma r}\,\dif r
\\
&\leq
C_2\e^{-\gamma t}
\leq
C_2\e^{-\lambda t}.
\end{align*}
For the second term, using
\[
\e^{-\lambda_V(t-s-1)}\e^{-\gamma s}
=
\e^{\lambda_V}
\e^{-\lambda t}
\e^{-(\lambda_V-\lambda)(t-s)}
\e^{-(\gamma-\lambda)s},
\]
we obtain
\begin{align}\label{t2}
\int_0^{t-2}
\e^{-\lambda_V(t-s-1)}|m(s)|\,\dif s
&\leq
C_2\e^{-\lambda t}
\int_0^{t-2}
\e^{-(\lambda_V-\lambda)(t-s)}
\e^{-(\gamma-\lambda)s}\dif s
\no\\
&\leq
C_\lambda\e^{-\lambda t}.
\end{align}
Thus \eqref{tan2-frozen-comparison} gives
\[
{\bf d}_V(\mathcal L_{X_t},\nu_0\bar P^0_t)
\leq
C_\lambda\e^{-\lambda t}.
\]
Combining this with
(\ref{cw1})
and enlarging the constant to cover $0\leq t\leq2$, we get (\ref{cw-positive-gamma-exp}).

If $\lambda_V>\gamma$, then the preceding argument can be carried out
with $\lambda=\gamma$. Indeed,
\begin{align*}
\int_0^{t-2}
\e^{-\lambda_V(t-s-1)}\e^{-\gamma s}\dif s
&=
\e^{\lambda_V}\e^{-\gamma t}
\int_0^{t-2}
\e^{-(\lambda_V-\gamma)(t-s)}\dif s
\\
&\leq
C_3\e^{-\gamma t}.
\end{align*}
Hence the endpoint $\lambda=\gamma$ is also admissible.

\smallskip
\noindent
{\it Step 2. The  case $\gamma=0$.}
If $m(0)=0$, uniqueness for the mean equation implies
$m(t)\equiv0$. Therefore \eqref{ex2} reduces exactly to the centered
Ornstein--Uhlenbeck equation, and
\[
{\bf d}_V(\mathcal L_{X_t},\mu_0)
\leq
C_4\e^{-\lambda_Vt},
\qquad t\geq0.
\]
This proves \eqref{cw-critical-zero-mean-exp}.

Assume now that $m(0)\neq0$ and set
\[
y(t):=\sqrt c\,m(t).
\]
Then
\[
y'(t)
=
-c\big(y(t)-\tanh y(t)\big).
\]
Since $y-\tanh y$ has the same sign as $y$, the sign of $y(t)$ is
preserved and $|y(t)|$ decreases to zero. Moreover,
\[
\frac{y-\tanh y}{y^3}
\longrightarrow
\frac13,
\qquad y\to0.
\]
It follows that, for all sufficiently large $t$,
\[
\frac{\dif}{\dif t}\frac1{y(t)^2}
=
2c\frac{y(t)-\tanh y(t)}{y(t)^3}
\geq C_4.
\]
After integration and adjustment of the constant on bounded time
intervals, we obtain
\begin{equation*}
|m(t)|
\leq
C_4(1+t)^{-1/2},
\qquad t\geq0.
\end{equation*}
Applying \eqref{tan2-frozen-comparison} with $\gamma=0$, we deduce that
\begin{align*}
{\bf d}_V(\mathcal L_{X_t},\nu_0\bar P^0_t)
&\leq
C_4\int_{t-2}^t
\frac{(1+s)^{-1/2}}{\sqrt{t-s}}\dif s
\\
&\quad+
C_4\int_0^{t-2}
\e^{-\lambda_V(t-s-1)}(1+s)^{-1/2}\dif s.
\end{align*}
The first integral is bounded by $C_4(1+t)^{-1/2}$. Splitting the
second integral over $[0,t/2]$ and $[t/2,t-2]$ gives the same bound.
Therefore,
\[
{\bf d}_V(\mathcal L_{X_t},\nu_0\bar P^0_t)
\leq
C_5(1+t)^{-1/2}.
\]
This together with (\ref{cw1}) proves
(\ref{cw-critical-poly}).

\smallskip
\noindent
{\it Step 3. Optimality of the critical rate.}
Suppose that $\gamma=0$, $m(0)\neq0$ and $V\equiv1$. From the preceding asymptotics, we have
\[
\frac{\dif}{\dif t}\frac1{y(t)^2}
=
2c\frac{y(t)-\tanh y(t)}{y(t)^3}
\longrightarrow
\frac{2c}{3}.
\]
Consequently,
\[
\frac1{t\,y(t)^2}\longrightarrow\frac{2c}{3}.
\]
Since $y(t)=\sqrt c\,m(t)$, we obtain
\begin{equation}\label{critical-exact-mean-tan2}
t\,m(t)^2
\longrightarrow
\frac{3}{2c^2}.
\end{equation}
Let $\nu_t$ be the Gaussian measure with mean $m(t)$ and variance
$1/(2c)$. By centering the equation around $m(t)$ and using the
exponential convergence of the centered Ornstein--Uhlenbeck process, we have
\[
\|\mathcal L_{X_t}-\nu_t\|_{\rm TV}
\leq
C_6\e^{-ct}.
\]
Moreover, the total variation distance between two Gaussian measures
with the same variance $1/(2c)$ satisfies
\[
\|\nu_t-\mu_0\|_{\rm TV}
=
2\sqrt{\frac c\pi}\,|m(t)|
+
o(|m(t)|).
\]
Since $\e^{-ct}=o(t^{-1/2})$, \eqref{critical-exact-mean-tan2}
implies
\[
\begin{aligned}
\lim_{t\to\infty}
\sqrt t\,
\|\mathcal L_{X_t}-\mu_0\|_{\rm TV}
&=
2\sqrt{\frac c\pi}
\lim_{t\to\infty}\sqrt t\,|m(t)|
\\
&=
2\sqrt{\frac c\pi}
\sqrt{\frac{3}{2c^2}}
=
\sqrt{\frac{6}{\pi c}}.
\end{aligned}
\]
This proves \eqref{critical-exact-limit} and shows that the rate $(1+t)^{-1/2}$ is optimal.
\end{proof}

\subsection{Basin-dependent ergodicity in the phase-transition regime}
We finally enter the phase-transition regime \(-c<\gamma<0\), where
\(\mu_-\), \(\mu_0\), and \(\mu_+\) coexist. Since no global anchored
smallness can hold around any one of them, we combine the local anchored
principle with the closed mean equation: the latter determines the basin
through the sign of the initial mean, while the former transfers the frozen
Ornstein--Uhlenbeck decay around the selected stable equilibrium.
We give:

\begin{proof}[{\bf Proof of Theorem \ref{tan3}}]
	Recall that the mean $m(t)$ satisfies \eqref{mean-ode-tanh}.
In the regime $-c<\gamma<0$, the function $F$ has exactly three
zeros
\[
m_-<0=m_0<m_+.
\]
The scalar mean equation implies that
	positive initial means converge to $m_+$, negative initial means converge to
	$m_-$.
If $m(0)=0$, uniqueness for the solution gives
$m(t)\equiv0$. Thus the sign of the initial mean determines the
limiting equilibrium. Below, we divide the proof into three steps.

	\medskip
	\noindent\textit{Step 1: Exponential convergence of the mean.}
Assume $m(0)\neq0$, and let $m_i\in\{m_-,m_+\}$ be the equilibrium
determined by the sign of $m(0)$. Set
\(
u(t):=m(t)-m_i.
\)
Then
	$$u'(t) = m'(t) = F(m(t)) = F(m_i + u(t)).$$
Since $F(m_i)=0$, Taylor's formula gives
\[
u'(t)
=
F'(m_i)u(t)+O(u(t)^2)
=
-\lambda_i^{\rm mean}u(t)+O(u(t)^2),
\]
where, by \eqref{F'(m_i)},
\[
\lambda_i^{\rm mean}
=
\gamma+c\tanh^2(\sqrt c\,m_i)>0.
\]
By symmetry, this value is the same for $m_+$ and $m_-$:
$$
\lambda_+^{\rm mean}=\lambda_-^{\rm mean}=:\lambda^{\rm mean}.
$$
Since $u(t)\to0$, there exists $t_0>0$ such that
\[
\frac{\dif}{\dif t}|u(t)|
\leq
-\frac{\lambda^{\rm mean}}2|u(t)|,
\qquad t\geq t_0.
\]
Consequently,
\begin{equation}\label{preliminary-mean-rate-tan3}
|u(t)|
\leq
C_0\e^{-\lambda^{\rm mean}t/2},
\qquad t\geq0.
\end{equation}
Taylor's formula also gives
\[
u'(t)
=
-\lambda^{\rm mean}u(t)+r(t)u(t),
\qquad
|r(t)|\leq C_1|u(t)|.
\]
In view of \eqref{preliminary-mean-rate-tan3},
\[
|r(t)|
\leq
C_1\e^{-\lambda^{\rm mean}t/2},
\qquad
\int_0^\infty|r(t)|\dif t<\infty.
\]
If $u$ does not vanish, integration of this scalar equation gives
\[
|u(t)|
=
|u(0)|
\exp\left(
-\lambda^{\rm mean}t+\int_0^t r(s)\dif s
\right).
\]
If $u$ vanishes at some time, uniqueness of the mean equation implies
that it remains zero thereafter. Hence, in either case,
\begin{equation}\label{mean-rate-exact-tan3}
|m(t)-m_i|
\leq
C_2\e^{-\lambda^{\rm mean}t},
\qquad t\geq0.
\end{equation}

	\medskip
	\noindent\textit{Step 2: Comparison with the frozen Ornstein--Uhlenbeck process.}
	For $i\in\{-,+\}$, let $\bar X_t^i$ be the frozen process
\[
\dif\bar X_t^i
=
-(\gamma+c)(\bar X_t^i-m_i)\dif t+\dif W_t,
\]
started from the same initial distribution
$\nu_0=\mathcal L_\xi$. By Theorem~\ref{thm2} and the Lipschitz
continuity of $\tanh$,
\begin{align*}
{\bf d}_V(\mathcal L_{X_t},\nu_0\bar P_t^i)
&\leq
C_3\int_0^t
\sK_1(t-s)|m(s)-m_i|\dif s
\nonumber\\
&\leq
C_3\int_{t-2}^t
\frac{|m(s)-m_i|}{\sqrt{t-s}}\dif s
+
C_3\int_0^{t-2}
\e^{-\lambda_V(t-s-1)}
|m(s)-m_i|\dif s.
\end{align*}
Fix $0<\lambda<\lambda_V\wedge\lambda^{\rm mean}$. By
\eqref{mean-rate-exact-tan3}, we obtain
\begin{align*}
\int_{t-2}^t
\frac{|m(s)-m_i|}{\sqrt{t-s}}\dif s
&\leq
C_4\e^{-\lambda^{\rm mean}t}
\int_0^2r^{-1/2}\e^{\lambda^{\rm mean}r}\dif r
\\
&\leq
C_4 \e^{-\lambda t}.
\end{align*}
For the second part, similar to (\ref{t2}), we have
\begin{align*}
&\int_0^{t-2}
\e^{-\lambda_V(t-s-1)}
|m(s)-m_i|\dif s
\\
&\qquad\leq
C_5\e^{-\lambda t}
\int_0^{t-2}
\e^{-(\lambda_V-\lambda)(t-s)}
\e^{-(\lambda^{\rm mean}-\lambda)s}\dif s
\leq
C_{5}\e^{-\lambda t}.
\end{align*}
Thus, after adjusting the constant for $0\leq t\leq2$,
\[
{\bf d}_V(\mathcal L_{X_t},\nu_0\bar P_t^i)
\leq
C_{6}\e^{-\lambda t}.
\]
This together with \eqref{OU1},
and
the triangle inequality yields
\[
{\bf d}_V(\mathcal L_{X_t},\mu_i)
\leq
C_{7}\e^{-\lambda t}.
\]
Taking $i=+$ when $\mE\xi>0$ proves \eqref{r1}, while taking
$i=-$ when $\mE\xi<0$ proves \eqref{r2}.

If $\lambda_V>\lambda^{\rm mean}$, then
\begin{align*}
&\int_0^{t-2}
\e^{-\lambda_V(t-s-1)}
\e^{-\lambda^{\rm mean}s}\dif s
\\
&\qquad=
\e^{\lambda_V}\e^{-\lambda^{\rm mean}t}
\int_0^{t-2}
\e^{-(\lambda_V-\lambda^{\rm mean})(t-s)}\dif s
\leq
C_8\e^{-\lambda^{\rm mean}t}.
\end{align*}
The short-time part has the same decay rate by
\eqref{mean-rate-exact-tan3}. Therefore,
\[
{\bf d}_V(\mathcal L_{X_t},\mu_i)
\leq
C_8\e^{-\lambda^{\rm mean}t},
\]
so the endpoint $\lambda=\lambda^{\rm mean}$ is admissible in
\textup{(i)} and \textup{(ii)}.

\smallskip
\noindent
{\it Step 3. The centered initial.}
If $\mE\xi=0$, then $m(0)=0$, and uniqueness for the mean equation
implies
\(
m(t)\equiv0.
\)
Consequently, \eqref{ex2} reduces exactly to the centered
Ornstein--Uhlenbeck equation
\[
\dif X_t
=
-(\gamma+c)X_t\dif t+\dif W_t.
\]
Its invariant measure is $\mu_0$, and its exponential ergodicity
gives
\[
{\bf d}_V(\mathcal L_{X_t},\mu_0)
\leq
C_2\e^{-\lambda_Vt},
\qquad t\geq0.
\]
This proves \eqref{r0} and completes the whole proof.
\end{proof}

\bigskip

\noindent{\bf Declarations}

\medskip
\noindent
{\bf Funding.}
This work is supported by the National Key R\&D Program of China (No. 2023YFA1010103) and the National Natural Science Foundation of China (No. 12471140,  12401178).

\medskip
\noindent
{\bf Conflict of interest.}
The authors declared that they have no conflict of interest to this work.


\bigskip
\begin{thebibliography}{99}
\bibitem{Barbu2020} Barbu, V. and R{\"o}ckner, M.: From nonlinear Fokker-Planck equations to solutions of distribution dependent SDE. {\it Ann. Probab.} {\bf 48} (2020), 1902--1920.

\bibitem{BRS19} Bogachev, V.I., R{\"o}ckner, M. and Shaposhnikov, S.V.: Convergence in variation of solutions of nonlinear Fokker--Planck--Kolmogorov equations to stationary measures. {\it J. Funct. Anal.} {\bf 276} (2019), 3681--3713.



\bibitem{BGG13} Bolley, F., Gentil, I. and Guillin, A.: Uniform convergence to equilibrium for granular media. {\it Arch. Ration. Mech. Anal.} {\bf 208} (2013), 429--445.




\bibitem{CD18} Carmona, R. and Delarue, F.: {\it Probabilistic Theory of Mean Field Games with Applications I}. Springer, Cham (2018).

\bibitem{C2020} Carrillo, J.A., Gvalani, R.S., Pavliotis, G.A. and Schlichting, A.: Long-time behaviour and phase transitions for the McKean--Vlasov equation on the torus. {\it Arch. Ration. Mech. Anal.} {\bf 235} (2020), 635--690.



\bibitem{Carrillo2006} Carrillo, J.A., McCann, R.J. and Villani, C.: Contractions in the 2-Wasserstein length space and thermalization of granular media. {\it Arch. Ration. Mech. Anal.} {\bf 179} (2006), 217--263.

\bibitem{CGM08} Cattiaux, P., Guillin, A. and Malrieu, F.: Probabilistic approach for granular media equations in the non-uniformly convex case. {\it Probab. Theory Related Fields} {\bf 140} (2008), 19--40.

\bibitem{Cormier2025} Cormier, Q.: On the stability of the invariant probability measures of McKean--Vlasov equations. {\it Ann. Inst. Henri Poincar\'e Probab. Stat.} {\bf 61} (2025), 2405--2429.

\bibitem{Daw83} Dawson, D.A.: Critical dynamics and fluctuations for a mean-field model of cooperative behavior. {\it J. Stat. Phys.} {\bf 31} (1983), 29--85.

        \bibitem{Delgadino2021} Delgadino, M.G., Gvalani, R.S. and Pavliotis, G.A.: On the diffusive-mean field limit for weakly interacting diffusions exhibiting phase transitions. {\it Arch. Ration. Mech. Anal.} {\bf 241} (2021), 91--148.

\bibitem{Delgadino2023} Delgadino, M.G., Gvalani, R.S., Pavliotis, G.A. and Smith, S.A.: Phase transitions, logarithmic Sobolev inequalities, and uniform-in-time propagation of chaos for weakly interacting diffusions. {\it Comm. Math. Phys.} {\bf 401} (2023), 275--323.



\bibitem{Eberle2019} Eberle, A., Guillin, A. and Zimmer, R.: Quantitative Harris-type theorems for diffusions and McKean--Vlasov processes. {\it Trans. Amer. Math. Soc.} {\bf 371} (2019), 7135--7173.

\bibitem{GLM}Guillin, A., Le Bris, P. and Monmarch\'e, P.: Uniform in time propagation of chaos for the 2D vortex model and other singular stochastic systems. {\it J. Eur. Math. Soc.} {\bf 27} (2025),  2359--2386.

\bibitem{Guillin2022} Guillin, A., Liu, W., Wu, L. and Zhang, C.: Uniform Poincar\'e and logarithmic Sobolev inequalities for mean field particle systems. {\it Ann. Appl. Probab.} {\bf 32} (2022), 1590--1614.



\bibitem{Gvalani2020} Gvalani, R.S. and Schlichting, A.: Barriers of the McKean--Vlasov energy via a mountain pass theorem in the space of probability measures. {\it J. Funct. Anal.} {\bf 279} (2020), 108720.

\bibitem{Hammersley2021} Hammersley, W.R.P., Si{\'s}ka, D. and Szpruch, L.: McKean--Vlasov SDEs under measure dependent Lyapunov conditions. {\it Ann. Inst. Henri Poincar\'e Probab. Stat.} {\bf 57} (2021), 1032--1057.

\bibitem{Hao2024} Hao, Z., R{\"o}ckner, M. and Zhang, X.: Strong convergence of propagation of chaos for McKean--Vlasov SDEs with singular interactions. {\it SIAM J. Math. Anal.} {\bf 56} (2024), 2661--2713.

\bibitem{Huang2026} Huang, X., Kopfer, E. and Ren, P.: Log-Sobolev inequalities and exponential ergodicity for non-degenerate and degenerate McKean--Vlasov SDEs. {\it J. Funct. Anal.} {\bf 290} (2026), No. 111410.

\bibitem{Huang2025} Huang, X., Ren, P. and Wang, F.-Y.: Entropy-cost inequality for McKean--Vlasov SDEs with singular interactions. arXiv:2505.19787 (2025).


\bibitem{HuangWang2025}
Huang, X. and Wang, F.-Y.:
Log-Harnack inequality and Bismut formula for McKean--Vlasov SDEs with singularities in all variables.
{\it Math. Ann.} {\bf 393} (2025), 241--269.

\bibitem{JW} Jabin, P.E. and Wang, Z.: Quantitative estimates of propagation of chaos for stochastic systems with $W^{-1,\infty}$ kernels. {\it Invent. Math.} {\bf 214} (2018), 523--591.

\bibitem{Liang2021} Liang, M., Majka, M.B. and Wang, J.: Exponential ergodicity for SDEs and McKean--Vlasov processes with L\'evy noise. {\it Ann. Inst. Henri Poincar\'e Probab. Stat.} {\bf 57} (2021), 1665--1701.

\bibitem{Liu2021} Liu, W., Wu, L. and Zhang, C.: Long-time behaviors of mean-field interacting particle systems related to McKean--Vlasov equations. {\it Comm. Math. Phys.} {\bf 387} (2021), 179--214.

\bibitem{MT93} Meyn, S.P. and Tweedie, R.T.: Stability of Markovian processes III: Foster-Lyapunov criteria for continuous time processes. {\it Adv. Appl. Probab.} {\bf 25} (1993), 518--548.

\bibitem{MR} Monmarch\'e, P. and Reygner, J.: Local convergence rates for Wasserstein gradient flows and McKean--Vlasov equations with multiple stationary solutions. {\it Probab. Theory Related Fields} (2025), 1--59.

\bibitem{Ren2021} Ren, P. and Wang, F.-Y.: Exponential convergence in entropy and Wasserstein for McKean--Vlasov SDEs. {\it Nonlinear Anal.} {\bf 206} (2021), 112259.

\bibitem{RZ2021} R{\"o}ckner, M. and Zhang, X.: Well-posedness of distribution dependent SDEs with singular drifts. {\it Bernoulli} {\bf 27} (2021), 1131--1158.

\bibitem{SV} Stroock, D.W. and Srinivasa Varadhan, S.R.: {\it Multidimensional diffusion processes}. Springer-Verlag Berlin Heidelberg, 1997.

\bibitem{Sz91} Sznitman, A.-S.: Topics in propagation of chaos. In: {\it Ecole d'Et{\'e} de Probabilit{\'e}s de Saint-Flour XIX--1989}. Lecture Notes in Mathematics, vol. {\bf 1464}, Springer, Berlin (1991), 165--251.

\bibitem{T} Tugaut, J.: Convergence to the equilibria for self-stabilizing processes in double well landscape. {\it Ann. Probab.} {\bf 41} (2010), 1427--1460.



\bibitem{W18} Wang, F.-Y.: Distribution dependent SDEs for Landau type equations. {\it Stoch. Proc. Appl.} {\bf 128} (2018), 595--621.

\bibitem{Wang2023a} Wang, F.-Y.: Exponential ergodicity for non-dissipative McKean--Vlasov SDEs. {\it Bernoulli} {\bf 29} (2023), 1035--1062.

\bibitem{Wang2023b} Wang, F.-Y.: Exponential ergodicity for singular reflecting McKean--Vlasov SDEs. {\it Stoch. Proc. Appl.} {\bf 160} (2023), 265--293.

\bibitem{WangRen2025} Wang, F.-Y. and Ren, P.: {\it Distribution Dependent Stochastic Differential Equations}. World Scientific, Singapore, 2025.

\bibitem{XXZ26} Xia, P., Xie, L. and Zhang, X.: Gradient estimates for SDEs with singular and unbounded coefficients. arXiv:2604.00685 (2026).

\bibitem{XZ} Xie, L. and Zhang, X.: Ergodicity of stochastic differential equations with jumps and singular coefficients. {\it Ann. Inst. Henri Poincar\'e Probab. Stat.} {\bf 56} (2020), 175--229.

\bibitem{Zhang2023} Zhang, S.-Q.: Existence and non-uniqueness of stationary distributions for distribution dependent SDEs. {\it Electron. J. Probab.} {\bf 28} (2023),  1--34.

\bibitem{Zhang2025} Zhang, S.-Q.: A local bifurcation theorem for McKean--Vlasov diffusions. {\it J. Funct. Anal.} {\bf 289} (2025), No. 111144.

\bibitem{Zhang11} Zhang, X.: Stochastic differential equations with Sobolev diffusion and singular drift and their applications. {\it Ann. Appl. Probab.} {\bf 21} (2011), 1803--1830.
\end{thebibliography}
\end{document}